\newcommand{\CC}{{\cal C}}
\newtheorem{theorem}{Theorem}
\newtheorem{conjecture}[theorem]{Conjecture}
\newtheorem{corollary}[theorem]{Corollary}
\newtheorem{lemma}[theorem]{Lemma}
\newtheorem{observation}[theorem]{Observation}
\title{On a conjecture concerning 4-coloring of graphs with one crossing}
\author{Zdeněk Dvořák\thanks{Computer Science Institute, Charles University, Prague, Czech Republic. E-mail: \url{rakdver@iuuk.mff.cuni.cz}.
Supported by the ERC-CZ project LL2328 (Beyond the Four Color Theorem, registered as ``Realizace projektu hraničního výzkumu v oblasti teorie grafů -- barevnost a návrh algoritmů'') of the Ministry of Education of Czech Republic.}
\and Bernard Lidický\thanks{Department of Mathematics, Iowa State University. Ames, IA, USA. E-mail: \url{lidicky@iasate.edu}.
Supported in part by NSF DMS-2152490.
This material is based upon work supported by the National Science
Foundation under Grant No. DMS-1928930, while the author was in
residence at the Simons Laufer Mathematical Sciences Institute in
Berkeley, California, during the Spring 2025 semester.
}
\and Bojan Mohar\thanks{Department of Mathematics, Simon Fraser University, Burnaby, BC V5A 1S6, Canada. E-mail: \url{mohar@sfu.ca}.
Supported in part by the NSERC Discovery Grant R832714 (Canada),
by the ERC Synergy grant (European Union, ERC, KARST, project number
101071836),
and by the Research Project N1-0218 of ARIS (Slovenia). On leave
from
FMF, Department of Mathematics, University of Ljubljana, Ljubljana, Slovenia.}}
\date{}
\begin{document}
\maketitle

\begin{abstract}
We conjecture that every graph of minimum degree five with no separating triangles
and drawn in the plane with one crossing is 4-colorable.  In this paper, we use computer enumeration
to show that this conjecture holds for all graphs with at most 28 vertices, explore the consequences
of this conjecture and provide some insights on how it could be proved.
\end{abstract}

Famously, every planar graph is 4-colorable~\cite{AppHak1,AppHakKoc}.  It is natural to ask whether
this statement can be strengthened.  As shown in~\cite{Fisk78,hutcheuler},
for every surface $\Sigma$ other than the sphere, there are infinitely many minimal non-4-colorable, i.e.,
\emph{$5$-critical}, graphs which can be drawn in $\Sigma$ (a graph $G$ is \emph{$k$-critical} if $\chi(G)=k$
and every proper subgraph of $G$ is $(k-1)$-colorable).  Still, one could hope that at
least for simplest non-trivial surfaces $\Sigma$ (projective plane, torus), it might be possible to
characterize 5-critical graphs drawn in $\Sigma$, or at least to develop a polynomial-time algorithm
to test 4-colorability of graphs drawn in $\Sigma$.  However, either of these goals seems
far beyond our reach using the current methods.

To obtain at least some intuition, we consider ``minimally nonplanar'' graphs, or more precisely, graphs that
can be drawn in the plane with at most one crossing; let $\CC$ denote the class of all such graphs.  Let us remark that graphs
in $\CC$ can be drawn without crossings both in the projective plane and on the torus.  It is still easy to construct
infinite families of 5-critical graphs in $\CC$; however, preliminary computer-assisted investigations suggest
that obtaining their characterization could be possible.  In this paper, we explore in detail a natural conjecture
arising from these investigations.

Of course, 5-critical graphs have minimum degree at least four.  Moreover, there is a natural way how to handle
vertices of degree four: Suppose $G\neq K_5$ is a 5-critical graph and a vertex $v\in V(G)$ has degree four.
Since $G$ is 5-critical and $G\neq K_5$, the graph $G$ does not contain $K_5$ as a subgraph, and thus
$v$ has distinct neighbors $x$ and $y$ such that $xy\not\in E(G)$.  Let $G'$ be the graph obtained from $G-v$ by identifying $x$ with $y$ to a single vertex.
Assuming that $G$ can be drawn in the plane with one crossing so that this crossing is not on the edges $vx$ and $vy$,
the graph $G'$ can also be drawn in the plane with at most one crossing.  Moreover, it is easy to see that any $4$-coloring of $G'$
could be extended to a 4-coloring of $G$. Since $G$ is $5$-critical, it follows that $G'$ is not $4$-colorable, and thus it has a 5-critical
subgraph $G''\in \CC$.  Based on this observation, one could hope to inductively prove statements about the structure of 5-critical graphs in $\CC$
of minimum degree four.  Indeed, a similar inductive argument used to handle 4-faces has been crucial in the development of the theory of 4-critical triangle-free
graphs, which eventually lead to design of a linear-time algorithm for 3-colorability of triangle-free graphs on surfaces~\cite{trfree7}.

For this argument to take off, one needs to handle the basic case of 5-critical graphs in $\CC$ of minimum degree at least five.
Based on computer-assisted experiments, we believe no such graphs exists, and thus $K_5$ is the sole basic case for the argument described above.
\begin{conjecture}\label{conj-main}
Every 5-critical graph in $\CC$ contains a vertex of degree four.
\end{conjecture}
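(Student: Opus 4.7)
The plan is a discharging argument aimed at a minimum counterexample: a 5-critical graph $G\in\CC$ with $\delta(G)\ge 5$. Five-criticality already eliminates separating cliques of size at most three (in particular, $G$ has no separating triangle), and for every edge $uv$ the graph $G-uv$ admits a 4-coloring in which $u$ and $v$ receive the same color. Fix a drawing of $G$ with exactly one crossing of edges $e_1,e_2$, and let $G^+$ be the planarization: the simple plane graph obtained by placing a new degree-four vertex $z$ at the crossing. Then $|V(G^+)|=|V(G)|+1$ and $|E(G^+)|=|E(G)|+2$, so Euler's formula gives $|E(G)|\le 3|V(G)|-5$. Combined with $\delta(G)\ge 5$, this forces the average degree of $G$ into a narrow window just above five, so degree-five vertices are plentiful and almost all faces of $G^+$ are triangular.

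My next step would be to assemble a list of \emph{reducible configurations}: small subgraphs that cannot occur in $G$. Prototypes to try include (a) a degree-five vertex whose neighborhood induces a prescribed local pattern, (b) a pair of adjacent degree-five vertices with controlled common neighborhood, and (c) analogous configurations anchored on or near the planarizing vertex $z$. For each such configuration $C$, reducibility is established by producing from $G$ a smaller graph in $\CC$ that is still not 4-colorable --- for instance by identifying two non-adjacent neighbors of a degree-five vertex (generalizing the degree-four reduction sketched in the excerpt), by Kempe-chain recoloring localized away from the crossing, or by contracting a carefully chosen edge that preserves one-crossing drawability and introduces no separating triangle.

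The final step is the discharging phase on $G^+$: assign each vertex $v$ initial charge $\mathrm{deg}(v)-6$ and each face $f$ initial charge $|f|-6$, so that by Euler's formula the total charge is strictly negative. Design redistribution rules so that, \emph{assuming no configuration from the list appears}, every vertex and every face ends with nonnegative charge, contradicting the negative total. The crossing vertex $z$ contributes charge $-2$ and is surrounded by four faces of restricted size, so a separate local argument is needed to show that enough charge can be pulled from the rest of the graph into this bounded region.

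The hard part, by a wide margin, is the second step: identifying an \emph{unavoidable} set of reducible configurations. This is the same obstacle that made the Four Color Theorem resist pencil-and-paper arguments, and here the structure added by the crossing only enlarges the catalogue --- both because reducibility proofs must respect the one-crossing drawing, and because the discharging near $z$ interacts with whichever configurations appear in its vicinity. The computer verification for $|V(G)|\le 28$ reported in the paper is consistent with this difficulty: it gives strong empirical support, but producing a provably unavoidable finite set of reducible configurations, or finding a structural shortcut that bypasses such a set, is where the real effort would lie.
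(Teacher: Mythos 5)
The statement you are addressing is a \emph{conjecture}: the paper does not prove it, and neither do you. What you have written is a program, not a proof, and its central step is missing. The entire content of the argument would reside in exhibiting an unavoidable set of configurations together with reducibility proofs valid in this setting, and you explicitly defer that. Moreover, the obstacles are not merely the same as in the Four Color Theorem ``only enlarged'': as the paper spells out in Section~\ref{sec-proofcom}, once the crossing is traded for a precolored $4$-face (via Lemma~\ref{lemma-topre4} and Corollary~\ref{cor-toext}, the crossing endpoints induce a $K_4$, so non-$4$-colorability of $G\in\CC$ is equivalent to a plane graph being rainbow-forbidding), the Kempe chains used in reducibility may pass through precolored vertices, which forbids the color exchanges that standard D-reducibility relies on; and C-reducibility replacements can create internal vertices of degree at most four or non-facial triangles, so the reduced graph need not be a smaller object of the same kind and no induction closes. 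Your sketch does not address either difficulty, and your proposed reductions (identifying neighbors of a degree-five vertex, contracting an edge ``preserving one-crossing drawability and introducing no separating triangle'') are exactly the kind of steps the paper shows require substantial care even for degree-\emph{four} vertices and in the generation results of Section~\ref{sec-gen4}.

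On framing: the paper's actual partial results do not run a discharging argument on the planarization with a crossing vertex $z$. Instead they reformulate the (stronger) Conjecture~\ref{conj-main-strong} as a precoloring-extension statement about rainbow-forbidding plane triangulation-like graphs (Theorem~\ref{thm-equiv}, Corollary~\ref{cor-combined}), and then verify it up to $28$ vertices by enumerating $4$-candidates (Section~\ref{sec-enum-cand}); discharging appears only in an auxiliary role (Lemma~\ref{lemma-thicknobas}) and as a hoped-for future strategy. So your plan is broadly the strategy the authors themselves advocate in Section~\ref{sec-proofcom}, but with the decisive components --- a workable notion of reducibility compatible with the precolored $4$-cycle, a mechanism to keep reductions inside the class (no new separating triangles, no internal vertices of degree at most four), and an unavoidability/discharging argument accounting for the precolored boundary --- all left open. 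As it stands, the proposal establishes nothing beyond what is already known.
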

Indeed, the following stronger statement seems to hold (a subgraph $K$ of a graph $G$ is \emph{separating} if $G-V(K)$ is not connected).
\begin{conjecture}\label{conj-main-strong}
If a graph $G$ does not contain separating triangles and has a drawing in the plane with one crossing such that all vertices not incident with the crossed edges
have degree at least five, then $G$ is 4-colorable.
\end{conjecture}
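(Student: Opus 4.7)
The plan is to argue by contradiction: let $G$ be a counterexample to Conjecture~\ref{conj-main-strong} minimizing $|V(G)|$. First I would extract standard structural properties of $G$: it is $5$-critical in $\CC$, hence $2$-connected with minimum degree at least $4$, and combined with the hypothesis every vertex outside the four endpoints $u_1,v_1,u_2,v_2$ of the two crossed edges has degree at least $5$. Separating triangles are forbidden by assumption, and separating $4$-cycles should be excludable by the usual splitting-and-gluing argument standard in the theory of critical planar graphs (combining a $4$-coloring of each side along the cut). Replacing the crossing by a new vertex $c$ of degree $4$ produces a planar graph $G^\ast$ on which Euler's formula and discharging are available.

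The heart of the argument would be a discharging scheme on $G^\ast$. Assign initial charge $d(v)-4$ to each vertex and $|f|-4$ to each face; Euler's formula gives total charge $-8$. Only $c$, the crossing endpoints (if of degree $4$), and triangular faces contribute negative charge, while every other vertex has charge at least $1$. I would design rules by which high-degree vertices send fractional charge to incident triangles and to the degree-$4$ vertices in the crossing region, and then verify that after discharging every element has non-negative charge, contradicting the total $-8$. For this to succeed, certain small local configurations must be absent; each such forbidden configuration would have to be proved \emph{reducible} in the sense that it is incompatible with $5$-criticality, typically by exhibiting a precoloring of its boundary that extends via Kempe-chain recoloring to a $4$-coloring of $G$.

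The main obstacle I anticipate is the reducibility step, especially in the vicinity of the crossing. Away from $c$, $G^\ast$ behaves much like a plane graph of minimum degree $5$, for which Birkhoff-diamond-style reducible configurations are abundant and classical techniques apply, so the discharging rules there should be relatively routine. Near $c$, however, the graph has a small window of possibly degree-$4$ vertices together with the artificial vertex itself, and the no-separating-triangles hypothesis constrains the local structure only indirectly; reducibility tends to fracture there into many cases, and the two crossed edges couple distant parts of the plane in ways that defeat naive Kempe-chain arguments. I therefore expect that a substantial computer-verified library of reducible configurations will be required, naturally extending the $28$-vertex enumeration announced in the abstract, and that the balance between unavoidability (designing enough discharging rules) and reducibility (keeping the configuration list tractable) will be the delicate core of the proof.
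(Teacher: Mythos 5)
The statement you are trying to prove is Conjecture~\ref{conj-main-strong}, which is precisely what this paper leaves open: the paper offers no proof, only a computer-assisted verification up to $28$ vertices (via the reformulation in Theorem~\ref{thm-equiv} and the candidate generation of Section~\ref{sec-enum-cand}) and, in Section~\ref{sec-proofcom}, a discussion of how a discharging-plus-reducibility proof might eventually be organized. Your proposal is essentially that same plan, and as written it is a research program rather than a proof: the two components that would constitute the actual mathematical content --- an unavoidability/discharging scheme and a library of configurations proved reducible \emph{in this setting} --- are not supplied, and these are exactly the parts the paper explains are hard. Concretely, after Lemma~\ref{lemma-topre4} the problem becomes a precoloring-extension problem (rainbow-forbidding plane graphs with a $4$-cycle outer face), and then Kempe chains meeting the precolored $4$-cycle cannot be freely switched, so most classical D-reducible configurations (Birkhoff diamond, etc.) are no longer reducible; moreover C-reducibility replacements can create internal vertices of degree at most four or non-facial triangles, so one does not land back in the class of candidates and the minimal-counterexample induction breaks. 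Your sketch acknowledges these difficulties but does not resolve them, so there is no proof here to compare with.

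Two specific steps in your outline are also shakier than you suggest. First, excluding separating $4$-cycles is not ``the usual splitting-and-gluing argument'': a $4$-coloring of the outside and a $4$-coloring of the inside need not agree on the cut $4$-cycle even up to permutation of colors, because the \emph{type} of the boundary coloring matters. In the paper this exclusion requires a genuine Kempe-chain argument (Lemma~\ref{lemma-holefour} and Corollary~\ref{cor-no4}), and even then it only works under the inductive hypothesis that no smaller restrictive $4$-candidate exists. Second, planarizing by a degree-$4$ crossing vertex $c$ is fine for Euler's formula and discharging, but a proper $4$-coloring of $G^\ast$ says nothing about the constraints $u_1\neq v_1$ and $u_2\neq v_2$ coming from the crossed edges; all reducibility arguments must be carried out against the clique $\{u_1,u_2,v_1,v_2\}$ (equivalently, the rainbow precoloring of the outer $4$-face), which is exactly where the loss of Kempe-chain freedom occurs. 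Until you exhibit a concrete set of reducible configurations valid under these constraints together with a discharging argument showing one of them is unavoidable in every internally suitable candidate, the conjecture remains unproved.
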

Let us remark that throughout the paper, the graphs are simple, without loops and parallel edges.
It is easy to see that $5$-critical graphs do not contain separating cliques,
and thus Conjecture~\ref{conj-main-strong} implies Conjecture~\ref{conj-main}.
The assumption that $G$ does not contain separating triangles cannot be dropped: Every graph $G\in \CC$ has a supergraph of minimum degree
at least five in $\CC$, obtained by gluing a copy of the icosahedron at each vertex of $G$.  A similar argument shows that Conjecture~\ref{conj-main-strong} has
the following consequence.
\begin{lemma}\label{lemma-wheel}
Let $G\in \CC$ be a graph without separating triangles, and consider any drawing of $G$ in the plane with at most one crossing.
If $G$ is not 4-colorable and Conjecture~\ref{conj-main-strong} holds, then there exists a vertex $v\in V(G)$ of degree four such that all incident faces have length three
and their boundaries do not contain the crossing.
\end{lemma}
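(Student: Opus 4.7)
The plan is to argue by contradiction. Assume that no vertex $v\in V(G)$ satisfies the conclusion; I will construct a supergraph $G^+\supseteq G$ lying in $\CC$ with the same one-crossing drawing, with no separating triangles, and with every vertex not incident with the crossed edges having degree at least five. Since $G$ is not 4-colorable, $G^+$ is not 4-colorable either, and then Conjecture~\ref{conj-main-strong} applied to $G^+$ yields the desired contradiction.

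First I record two preliminary observations. Because $G$ is not 4-colorable, the Four Color Theorem forces $G$ to be non-planar, so its drawing has exactly one crossing; denote the crossed edges by $ab$ and $cd$. The crossing cannot lie in the interior of a triangular face, for then both crossed edges would have to route through the closed triangle, and their four endpoints would be drawn among only three boundary vertices, forcing two crossed edges to share an endpoint, which is impossible. Hence, for a $v$-incident triangular face, ``the boundary contains the crossing'' means precisely ``the edge opposite $v$ is $ab$ or $cd$.'' I also observe that adding a single edge strictly inside a face of the current drawing preserves the property of having no separating triangles: any new triangle $T$ must use the new edge, so $G^+-V(T)=G-V(T)$ (the new edge vanishes once its endpoints are deleted), and therefore $T$ is separating in $G^+$ iff it is separating in $G$, which it is not.

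The construction of $G^+$ proceeds by processing each vertex $u\not\in\{a,b,c,d\}$ with $\deg_G(u)\le 4$ and raising its degree to at least five. If $u$ has an incident face $f$ of length at least four, I add a chord inside $f$: either from $u$ to a non-neighbor on the boundary of $f$, directly raising $\deg(u)$, or, when every other boundary vertex of $f$ is already a neighbor of $u$, between two such boundary vertices, shortening $f$ so that the process can continue. By the preliminary observation, each such addition keeps $G^+$ free of separating triangles, and after finitely many steps either $\deg(u)\ge 5$, or all incident faces of $u$ are triangular.

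The hard case, which I expect to be the main obstacle, is when $u$ has degree four with all incident faces triangular; by our standing hypothesis one of these triangles then has its opposite edge equal to $ab$ or $cd$. In this configuration no further chord at $u$ can be drawn without producing a multi-edge, so I must introduce new vertices. The naive choice of gluing an icosahedron at $u$ is precisely what fails, since each pair of consecutive icosahedral neighbors of $u$ together with $u$ forms a separating triangle; any viable gadget must instead place new degree-five vertices inside a neighboring face in such a way that every triangle through $u$ and the gadget is non-separating (for instance by spreading connections across several of $u$'s neighbors to provide a ``bypass path'' around each candidate separating triangle). Designing and verifying such a gadget is where the technical weight of the proof lies; once it is in place, every vertex off the crossed edges has degree at least five in $G^+$, and the promised contradiction with Conjecture~\ref{conj-main-strong} completes the argument.
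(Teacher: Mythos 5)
Your proposal has a genuine gap at exactly the point you flag yourself: you observe that gluing an icosahedron at a degree-four vertex $u$ fails (each pair of consecutive icosahedral neighbors of $u$, together with $u$, forms a separating triangle), and you then write that ``designing and verifying such a gadget is where the technical weight of the proof lies; once it is in place\dots'' --- but you never supply the gadget. This is not a small finish-line detail; it is the core of the lemma. Without it, the construction of $G^+$ does not exist, and the claimed application of Conjecture~\ref{conj-main-strong} cannot be made.

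The paper sidesteps the vertex-gadget problem entirely by changing coordinates first and then gluing along \emph{faces} rather than at vertices. Concretely, Lemma~\ref{lemma-topre4} and Corollary~\ref{cor-toext} convert the one-crossing graph $G$ into a \emph{plane} graph $H$ with outer face bounded by a 4-cycle $K$ (redrawing the edges among the four clique vertices so that $K$ is a face and removing the two crossed edges); non-4-colorability of $G$ becomes the statement that $H$ is rainbow-forbidding, and the conclusion of Lemma~\ref{lemma-wheel} becomes the existence of an internal degree-four vertex of $H$ with all incident faces triangular. The paper then proves the stronger Lemma~\ref{lemma-wheel2}: after disposing of separating cliques by a minimality argument, every internal face $f$ of length $|f|\ge 4$ is filled by a copy of $F_{|f|}$, where $F_4$ is the icosahedron minus an edge and $F_k$ is the $k$-wheel for $k\ge 5$, identifying the boundary cycle of $f$ (which is induced) with the outer cycle of $F_{|f|}$ (also induced). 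This creates no non-facial triangles (a triangle with vertices on both sides of an induced boundary cycle is impossible by planarity, and all triangles in $F_k$ are facial), raises the degrees of all face-boundary vertices, and triangulates all internal faces simultaneously. Applying $R(n)$ or $D(n)$ to the resulting graph yields an internal vertex $v$ of degree at most four; since all internal vertices of the pasted $F_k$'s have degree at least five, $v$ is a vertex of $H$, which is then shown to have degree exactly four with all incident faces already triangular. Your direct approach of building a one-crossing supergraph $G^+$ not only lacks the gadget but also has to contend with faces whose boundary contains the crossing point, a complication the paper avoids by working in the precoloring-extension setting from the start.
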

In particular, this has the following consequence (where $W_k$ is the \emph{$k$-wheel} graph consisting of a $k$-cycle and a vertex adjacent to all vertices of this cycle).
\begin{corollary}\label{cor-wheel}
If Conjecture~\ref{conj-main-strong} holds, then every graph in $\CC$ that does not contain $W_4$ as a subgraph is $4$-colorable.
\end{corollary}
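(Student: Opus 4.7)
The plan is to argue by contradiction via a minimum counterexample. I would take $G \in \CC$ to be a vertex-minimum graph that contains no $W_4$ subgraph and is not $4$-colorable, and fix a drawing of $G$ in the plane with at most one crossing. The proof then splits into two cases, depending on whether $G$ contains a separating triangle.

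In the first case, suppose $G$ contains a separating triangle $T$ on vertices $\{a,b,c\}$. I would use the standard fact that such a triangle allows a decomposition $G = G_1 \cup G_2$, where $G_1$ and $G_2$ are proper subgraphs of $G$ meeting precisely on $T$. Since $\CC$ is closed under taking subgraphs, and being $W_4$-free is as well, both $G_1, G_2$ lie in $\CC$ and contain no $W_4$. By the minimality of $G$, each admits a proper $4$-coloring, and in each the vertices $a,b,c$ necessarily receive three distinct colors. Permuting the colors of the $4$-coloring of $G_2$ makes the two colorings agree on $\{a,b,c\}$, and their union is a proper $4$-coloring of $G$, a contradiction.

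In the second case, $G$ has no separating triangle, so Lemma~\ref{lemma-wheel} applies (conditionally on Conjecture~\ref{conj-main-strong}) and yields a vertex $v$ of degree $4$ whose four incident faces are all triangles whose boundaries avoid the crossing. Label the neighbors of $v$ in cyclic order around $v$ as $u_1,u_2,u_3,u_4$. Since the face between the edges $vu_i$ and $vu_{i+1}$ (indices mod $4$) is bounded entirely by uncrossed edges and has length three, its boundary must be the triangle $vu_iu_{i+1}$, forcing $u_iu_{i+1} \in E(G)$. Consequently $u_1u_2u_3u_4$ is a $4$-cycle in $G$, and together with $v$ these vertices form a $W_4$ subgraph, contradicting the choice of $G$.

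I expect the main (and only mildly nontrivial) obstacle to be the separating-triangle decomposition: one needs to confirm that the split $G = G_1 \cup G_2$ is genuinely valid even when an edge of $T$ carries the crossing in the fixed drawing. This is a short topological argument — replace the crossing by a dummy degree-four vertex to make the drawing planar, observe that the triangle $T$ still bounds a closed curve separating the plane into two regions, and assign each component of $G - V(T)$ to the region containing it — after which everything is in $\CC$ and the rest of the case proceeds as above. The second case is essentially automatic once Lemma~\ref{lemma-wheel} is in hand.
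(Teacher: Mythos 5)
Your proof is correct and follows essentially the intended route: the heart of the argument is exactly the application of Lemma~\ref{lemma-wheel} to a non-4-colorable, separating-triangle-free graph in $\CC$, whose degree-four vertex with four triangular incident faces yields a $W_4$ subgraph. The only difference is bookkeeping: the paper disposes of separating triangles by passing to a $5$-critical subgraph (which, as noted in the introduction, has no separating cliques), whereas you take a vertex-minimum counterexample and merge $4$-colorings across the triangle cut by hand --- both work, and your final topological worry is unnecessary, since the decomposition $G=G_1\cup G_2$ along a separating triangle is purely combinatorial and $\CC$ is closed under subgraphs, so no care about where the crossing sits is needed.
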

It is well known that it suffices to prove the Four Color Theorem for triangulations of the plane.  Similarly, Conjecture~\ref{conj-main-strong} has the following
equivalent form.  Let $\CC_0\subseteq \CC$ consist of graphs obtained from 4-connected plane triangulations by choosing distinct facial triangles $xuv$ and $yuv$ sharing
an edge $uv$ and adding the edge $xy$; we say that the vertices $x$, $y$, $u$, and $v$ are \emph{external} and all other vertices \emph{internal}.
\begin{conjecture}\label{conj-main-strong-tria}
Every non-4-colorable graph in $\CC_0$ has an internal vertex of degree four.
\end{conjecture}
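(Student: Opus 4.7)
To attack Conjecture~\ref{conj-main-strong-tria} directly, I would follow the standard discharging-plus-reducibility methodology. Let $G \in \CC_0$ be a minimum counterexample (minimizing the number of vertices): $G$ is not 4-colorable and no internal vertex has degree four. Writing $T = G - xy$ for the underlying 4-connected plane triangulation, the task is equivalent to finding a 4-coloring of $T$ in which $x$ and $y$ receive different colors; by the Four Color Theorem $T$ itself is 4-colorable, so the obstruction is that every 4-coloring satisfies $\varphi(x) = \varphi(y)$.

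The first step is to derive structural properties of $G$. Minimality, combined with the Four Color Theorem and the inductive hypothesis applied to smaller graphs in $\CC_0$, should forbid many small local substructures: any reduction of $G$ to a smaller $\CC_0$-instance with no new internal degree-four vertex must admit a 4-coloring by induction, and such a coloring cannot extend to $G$. This should yield the absence of small separating cycles in the interior, various restrictions on configurations of adjacent internal degree-$5$ vertices, and special constraints on the neighborhood of the crossing.

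Second, I would set up a discharging argument on $T$. Assign initial charge $d(v) - 6$ to each vertex; by Euler's formula the total is $-12$. Internal degree-$5$ vertices carry charge $-1$, internal degree-$6$ vertices are neutral, and internal vertices of degree at least $7$ have positive charge available for redistribution; the four external vertices $x, y, u, v$ contribute only a bounded correction. The rules should send charge from higher-degree vertices to nearby degree-$5$ vertices so that, unless $G$ contains one of a finite list of configurations $\mathcal{F}_1, \dots, \mathcal{F}_k$, every vertex ends with nonnegative charge—a contradiction with the total $-12$.

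The final—and hardest—step is to verify reducibility: each $\mathcal{F}_i$ must be shown to allow construction of a 4-coloring of $G$, contradicting minimality. For configurations far from the crossing, one can try classical tools (identification of nonadjacent vertices of a small separator, contraction along edges incident to a low-degree vertex, Kempe chain exchanges), all designed so that the resulting smaller graph still lies in $\CC_0$. The principal obstacle, by close analogy with the Four Color Theorem itself, is that the list of configurations is likely to be large enough that verifying their reducibility by hand is out of reach; a systematic computer-assisted enumeration, extending the exhaustive search already carried out for graphs with at most $28$ vertices, will almost certainly be required. A second difficulty specific to the one-crossing setting is that Kempe chain arguments in the neighborhood of $\{x,y,u,v\}$ can be blocked by the non-planar edge $xy$, so reductions near the crossing must track several colorings of that region simultaneously while preserving both the underlying 4-connected triangulation structure and the particular pair of facial triangles sharing the edge $uv$.
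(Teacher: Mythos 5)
What you have written is not a proof: the statement is an open conjecture in this paper (shown equivalent to Conjecture~\ref{conj-main-strong} in Theorem~\ref{thm-equiv} and verified only up to $28$ vertices by computer), and your text is a research program rather than an argument. The genuine gap is that every step that would actually constitute a proof is left unspecified: no discharging rules are given, no unavoidable set of configurations $\mathcal{F}_1,\dots,\mathcal{F}_k$ is exhibited, and no reducibility proofs are supplied --- you explicitly defer the hardest part to a future computer search. Moreover, the program you describe is essentially the one the paper itself lays out in Section~\ref{sec-proofcom}, where the specific obstructions are identified and remain unresolved: Kempe-chain exchanges are blocked when chains meet the precolored $4$-cycle arising from the crossing (so that, in the restrictive-candidate setting, small D-reducible configurations appear to be very rare, not merely ``a large list''), and C-reducible replacements can destroy membership in $\CC_0$ by creating internal vertices of degree at most four or non-facial triangles, so a minimum counterexample need not yield a smaller graph to which the induction hypothesis applies. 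The paper's partial remedies (Lemma~\ref{lemma-holefour}, Corollaries~\ref{cor-no4} and \ref{cor-dredu}, and the appendix formalizing Kempe-chain extracts) address only fragments of these difficulties, and your proposal does not add anything that overcomes them.

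One further caution about your setup: minimality within $\CC_0$ is more delicate than in the Four Color Theorem, because reductions such as contracting a neighborhood of a degree-four vertex or cutting along a small separating cycle may leave the class (the paper's Figure~\ref{fig-excritque} shows a $5$-critical graph in $\CC$ outside $\CC_0$, and Theorem~\ref{thm-gen4} needs the carefully designed expansion operations precisely for this reason). Any complete argument along your lines would need to specify, for each reduction, why the resulting graph is again a $4$-connected triangulation plus a crossing with no new internal degree-four vertex, or else work in the equivalent rainbow-forbidding-candidate formulation and confront the same issue there; as it stands, your appeal to ``the inductive hypothesis applied to smaller graphs in $\CC_0$'' is not justified.
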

Assuming Conjecture~\ref{conj-main-strong} holds, we can show that every non-4-colorable graph in $\CC_0$
can be obtained from $K_5$ by a sequence of simple operations.  Since we want to avoid creating separating triangles, this is not completely straightforward;
see Section~\ref{sec-gen4} for the definition of the \emph{expansion} operation.
\begin{theorem}\label{thm-gen4}
If Conjecture~\ref{conj-main-strong} holds, then for every non-$4$-colorable graph $G\in \CC_0$, there
exists a sequence $K_5=G_0, G_1,\ldots, G_m=G$ of graphs such that for every $i\in\{1,\ldots,m\}$, the graph $G_i\in \CC_0$
is non-$4$-colorable and is obtained from $G_{i-1}$ by expansion.
\end{theorem}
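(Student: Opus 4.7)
The plan is induction on $|V(G)|$. The base case is $G = K_5$, handled by the empty sequence. For the inductive step, take $G \in \CC_0$ non-$4$-colorable with $|V(G)| > 5$. Since $\CC_0$-graphs have no separating triangles (as remarked after Conjecture~\ref{conj-main-strong}), Lemma~\ref{lemma-wheel} yields a vertex $w \in V(G)$ of degree four whose four incident faces are triangles whose boundaries avoid the crossing. Such a $w$ is necessarily an \emph{internal} vertex: each of the external vertices $x,y,u,v$ is an endpoint of one of the two crossed edges $xy$ and $uv$, hence lies on at least one face whose boundary contains the crossing. Let $w_1, w_2, w_3, w_4$ be the neighbors of $w$ in cyclic order around the drawing, so that $w_1w_2w_3w_4$ is the rim 4-cycle of the wheel. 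The two rim diagonals $w_1w_3$ and $w_2w_4$ cannot both be edges of $G$: the wheel lies entirely in the planar part of the drawing, so if both diagonals were present they would have to cross each other, contradicting the uniqueness of the crossing. Assume without loss of generality $w_1 w_3 \notin E(G)$.

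I take as candidate predecessor the graph $G'$ obtained from $G$ by deleting $w$ and identifying $w_1$ with $w_3$ (collapsing any parallel edges to single edges). Because $w_1$ and $w_3$ lie on the common face vacated by removing $w$, the identification can be carried out inside that face and preserves planarity; the unique crossing of $G$ carries over to $G'$ unchanged. Moreover $G'$ is non-$4$-colorable: in any proper $4$-coloring of $G'$ the neighbors of $w$ in $G$ use at most three distinct colors (the common color of the merged vertex $w_1=w_3$, together with those of $w_2$ and $w_4$), so a fourth color is available for $w$ and we recover a $4$-coloring of $G$. Granting that $G' \in \CC_0$, the inductive hypothesis provides a sequence $K_5 = G_0, \ldots, G_{m-1} = G'$, and appending $G_m := G$ completes the argument, the final step $G_{m-1} \to G_m$ matching the expansion operation defined in Section~\ref{sec-gen4}.

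The main obstacle is establishing $G' \in \CC_0$. Writing $G = T + xy$ with $T$ a 4-connected plane triangulation, the natural underlying triangulation of $G'$ is obtained by deleting $w$ from $T$, closing the resulting quadrilateral face by the diagonal $w_1 w_3$, and then contracting that diagonal. Two local configurations can sabotage this reduction: (i) $w_1$ and $w_3$ may share a common neighbor $z \notin \{w, w_2, w_4\}$ in $G$, in which case contracting $w_1w_3$ creates a multi-edge at $z$---equivalently, $G'$ inherits a triangle through the merged vertex that can be separating; (ii) even absent such $z$, the contracted triangulation may fail to remain 4-connected, e.g., when a small vertex cut of $T$ shrinks under identification. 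The expansion operation in Section~\ref{sec-gen4} is defined with exactly these pathologies in mind and is engineered so that neither separating triangles nor drops in connectivity are introduced. Accordingly, the substantive work of the proof is a case analysis over the local structure at $w$---the adjacencies among the rim vertices, the common neighbors of the non-adjacent diagonal pair, and the relevant 3- and 4-vertex cuts intersecting the wheel---verifying that in every case the reduction (possibly rerouted, e.g., by switching diagonals or first absorbing an obstructing triangle) produces a graph in $\CC_0$, at which point induction closes the argument.
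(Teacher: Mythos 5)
Your proposal reproduces the natural first idea (take the degree-four vertex supplied by Lemma~\ref{lemma-wheel}, delete it, identify two non-adjacent neighbours on the rim), and your observations up to that point are fine: the vertex is internal, the two rim diagonals cannot both be present, non-$4$-colorability survives the identification. But the proof stops exactly where the theorem actually lives. You write that ``the substantive work of the proof is a case analysis over the local structure at $w$'' and assert, without argument, that in every case the reduction can be ``rerouted'' so as to stay in $\CC_0$. That assertion is the whole content of Section~\ref{sec-gen4}, and it is not a routine local analysis at $w$: it can happen that \emph{no} single-vertex reduction at or near $w$ (with either diagonal) yields a graph without separating triangles, and the paper's resolution is structurally different from what you sketch. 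The paper does not reduce at the vertex given by Lemma~\ref{lemma-wheel} applied to $G$; it passes to a deepest non-trivial restrictive subcanvas $H$, then to a minimal restrictive induced subgraph $H'$ of $H$ (so that every internal vertex of $H'$ is ``potentially useful''), applies Lemma~\ref{lemma-wheel2} there, and then needs Lemma~\ref{lemma-noop}, the troublesome-pentagon Lemma~\ref{lemma-trouble}, and the amoeba Lemma~\ref{lemma-amoeba} to show that $G$ contains a contractible bidiamond, a $1$-contractible path, or a proper amoeba (Theorem~\ref{thm-redu-weak-cand}). In the amoeba case the predecessor is obtained by deleting an entire subcanvas (blinding the eye) and adding an edge --- not by deleting one vertex and identifying two neighbours --- and this case is unavoidable: as the introduction notes via Figure~\ref{fig-excritque}, there are non-$4$-colorable members of $\CC$ whose interior cannot be shrunk vertex-by-vertex inside $\CC_0$, which is precisely why ``expansion'' includes the amoeba operation.

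A second, related gap: even when your reduction does avoid separating triangles, you still have to check that the step $G'\to G$ is one of the three expansions \emph{as defined in the paper}. The inverse of ``delete $w$, identify $w_1$ with $w_3$'' is a decontraction of a $1$-contractible path only when the path $w_1ww_3$ is $1$-contractible, i.e.\ when each non-facial triangle created by the contraction encloses at most one vertex; and in that case the paper's reduction (Observation~\ref{obs-reducopa}, Corollary~\ref{cor-generate-weak-onestep}) also deletes the triangle-isolated vertices, so $G'$ is not the graph you construct. You verify neither the $1$-contractibility condition nor the correspondence with the defined operations, so the claim that the final step ``matches the expansion operation'' is unsupported. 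Finally, note that the paper runs the induction not on graphs in $\CC_0$ but on restrictive weak $4$-candidates (rainbow- or diagonal-forbidding), preserving the kind, and only lifts to $\CC_0$ at the very end; your induction directly on $\CC_0$ is a reasonable framing, but it does not remove any of the difficulties above.
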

The expansion operation increases the number of vertices, and thus Theorem~\ref{thm-gen4} can be turned into a relatively
efficient algorithm to enumerate the non-4-colorable graphs in $\CC_0$.
However, it needs to be noted that there are 5-critical graphs in $\CC$ that do not belong to $\CC_0$; see Figure~\ref{fig-excritque}
for an example.  Consequently, Theorem~\ref{thm-gen4} is somewhat less explicit than it might appear, since one of the options for the
expansion operation needs to be essentially ``if $G_{i-1}$ contains the subgraph depicted in Figure~\ref{fig-excritque}, then
$G_i$ can be obtained by replacing the part of $G_{i-1}$ drawn inside the face $h$ by any larger triangulation without separating triangles''.
\begin{figure}
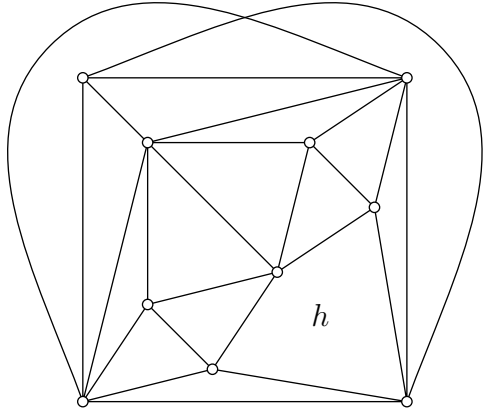

\begin{center}
\asyinclude[width=0.5\textwidth]{fig-excritque.asy}
\end{center}
\caption{A $5$-critical graph in $\CC$ with a $4$-face $h$.}\label{fig-excritque}
\end{figure}

The rest of the paper is organized as follows.  In Section~\ref{sec-toprecol}, we give some basic facts on 5-critical graphs in $\CC$, relating
them to plane graphs which are not 4-colorable so that the coloring of a specific 4-face matches a prescribed pattern,
and prove Lemma~\ref{lemma-wheel} and the equivalence of Conjectures~\ref{conj-main-strong} and \ref{conj-main-strong-tria}.  In Section~\ref{sec-enum-cand},
we give results on enumeration of planar graphs without separating triangles and with all vertices of degree at most four incident
with a specific face.  Using these results, we give a computer-assisted proof that Conjecture~\ref{conj-main-strong} holds for graphs with at most 28 vertices.
In Section~\ref{sec-gen4}, we prove Theorem~\ref{thm-gen4}.  Finally, in Section~\ref{sec-proofcom}, we investigate
how the proof of the Four Color Theorem could be modified to prove Conjecture~\ref{conj-main-strong}.

\section{Reduction to precoloring extension}\label{sec-toprecol}

Let us start with a simple observation.
\begin{lemma}\label{lemma-topre4}
If a graph $G\in\CC$ is not 4-colorable, then $G$ is non-planar and for any drawing of $G$ in the plane with one crossing,
the vertices of $G$ incident with the crossing edges induce a clique.
\end{lemma}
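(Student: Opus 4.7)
Non-planarity is immediate from the Four Color Theorem: if $G$ were planar, it would be $4$-colorable, contradicting the hypothesis. So the real content is the clique assertion. Fix a drawing of $G$ with a single crossing and let the crossed edges be $e_1 = u_1v_1$ and $e_2 = u_2v_2$; since $G$ is simple and two adjacent edges may be assumed not to cross, the four endpoints are distinct. Viewing the crossing point $p$ as a virtual degree-four vertex whose four neighbors appear in cyclic order $u_1, u_2, v_1, v_2$, the two diagonals $e_1, e_2$ are already in $E(G)$, so my job is to show that the four side edges $u_1u_2$, $u_2v_1$, $v_1v_2$, $v_2u_1$ also belong to $E(G)$.

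The plan is to argue each side edge separately by contradiction via a contraction. Suppose $u_1u_2 \notin E(G)$, and let $G' = G/\{u_1, u_2\}$ be the graph obtained by identifying these two non-adjacent vertices. The main step is to prove that $G'$ is planar. Since $u_1$ and $u_2$ both lie on the boundary of the face adjacent to $p$ between the arms $pu_1$ and $pu_2$, one can identify them along a curve drawn inside that face without introducing any crossings. After the identification, the former crossing edges $e_1, e_2$ both emanate from the merged vertex $w$ and still pass through $p$; but because they now share the endpoint $w$, the edge $e_2$ (now a $w$--$v_2$ edge) can be rerouted through the face adjacent to $p$ between the arms $pv_2$ and $pu_1$, which contains both $w$ and $v_2$ on its boundary. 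The resulting drawing has no crossings, so $G'$ is planar.

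Planarity of $G'$ combined with the Four Color Theorem yields a $4$-coloring of $G'$ that pulls back to a $4$-coloring of $G$ with $c(u_1) = c(u_2)$; since $u_1u_2 \notin E(G)$, this is a proper $4$-coloring, contradicting the non-$4$-colorability of $G$. The same argument applied in turn to each of the remaining three consecutive pairs $\{u_2, v_1\}$, $\{v_1, v_2\}$, $\{v_2, u_1\}$---each such pair sits on a common face around $p$ by the same picture---forces all four side edges into $E(G)$, and hence $\{u_1, v_1, u_2, v_2\}$ induces a $K_4$.

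The main obstacle is making the planarity of $G'$ rigorous: it requires a careful topological check that the identification inside the chosen face, followed by the rerouting of $e_2$ through the adjacent face, really removes the crossing at $p$ without introducing any new crossings elsewhere. Once this local picture around the crossing is firmly in place, the remainder of the argument is a routine application of the Four Color Theorem.
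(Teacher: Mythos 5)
Your proposal is correct and follows essentially the same route as the paper: identify a non-adjacent pair among the four endpoints of the crossed edges, observe that the resulting graph is planar (your rerouting argument merely fleshes out the topological step the paper leaves as an ``observe''), and pull back a 4-coloring via the Four Color Theorem to reach a contradiction. The only nitpick is the phrase ``two adjacent edges may be assumed not to cross'': since the lemma quantifies over \emph{all} one-crossing drawings you cannot simply assume this, but it is automatic here, because crossed edges sharing an endpoint could be locally rerouted to give a planar drawing of $G$, contradicting the non-planarity already established.
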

\begin{proof}
The graph $G$ is clearly non-planar by the Four Color Theorem.  Consider any drawing of $G$ in the plane with one crossing,
let $u_1v_1$ and $u_2v_2$ be the crossed edges in this drawing, and suppose for a contradiction that $\{u_1,u_2,v_1,v_2\}$
is not a clique in $G$.  By symmetry, we can assume that the vertices $u_1$ and $u_2$ are non-adjacent in $G$. 
Let $G'$ be the graph obtained from $G$ by identifying $u_1$ with $u_2$ to a single vertex $u$.
Observe that $G'$ is planar, and thus it has a proper 4-coloring.  The 4-coloring
of $G'$ corresponds to a 4-coloring of $G$ obtained by giving $u_1$ and $u_2$ the color of $u$.
This is a contradiction, since $G$ is non-4-colorable.
\end{proof}

Consider a graph $G$ drawn in the plane with one crossings, let $u_1v_1$ and $u_2v_2$ be the crossed edges in this drawing, and
suppose $\{u_1,u_2,v_1,v_2\}$ is a clique in $G$.  We can redraw the edges $u_1u_2$, $u_2v_1$, $v_1v_2$, and $v_2u_1$ next
to the edges $u_1v_1$ and $u_2v_2$ without introducing any crossings, so that the 4-cycle $C=u_1u_2v_1v_2$ bounds a face
of the corresponding plane drawing of the graph $G_0=G-\{u_1v_1,u_2v_2\}$.  Clearly any 4-coloring of $G$ assigns pairwise different colors
to the vertices of $C$, and thus $G$ is $4$-colorable if and only if there exists a 4-coloring of $G_0$ using all four colors on $C$.

The \emph{type} of a coloring $\psi$ of a graph $F$ is the equivalence $\approx$ such that for $u,v\in V(F)$, we
have $u\approx v$ if and only if $\psi(u)=\psi(v)$.
Note that there are exactly four types of $4$-colorings of a 4-cycle $K=x_1x_2x_3x_4$; such a 4-coloring $\varphi$ is
\begin{itemize}
\item \emph{rainbow} if $\varphi(x_1)\neq\varphi(x_3)$ and $\varphi(x_2)\neq\varphi(x_4)$,
\item \emph{$x_i$-diagonal} for $i\in \{1,2\}$ if $\varphi(x_i)=\varphi(x_{i+2})$ and $\varphi(x_{3-i})\neq\varphi(x_{5-i})$, and
\item \emph{bichromatic} if $\varphi(x_1)=\varphi(x_3)$ and $\varphi(x_2)=\varphi(x_4)$.
\end{itemize}
Let $H$ be a plane graph and let $F$ be the subgraph of $H$ consisting of the vertices and edges incident with the outer face
of $H$.  For a type $\approx$ of colorings of $F$, we say that a $4$-coloring $\psi$ of $H$ \emph{matches} $\approx$ if
$\approx$ is the type of the restriction of $\psi$ to $F$.  We say that a graph $H$ is \emph{$\approx$-forbidding} if $H$ does not
have any 4-coloring matching $\approx$.  Equivalently, for any 4-coloring $\varphi$ of $F$, if the type of $\varphi$ is $\approx$,
then $\varphi$ does not extend to a 4-coloring of $H$.

A plane graph $G$ with the outer face bounded by a 4-cycle $K$ is \emph{rainbow-forbidding} if $G$ is $\approx$-forbidding for the type $\approx$ of rainbow colorings of $K$.
Similarly, we define the notion of an \emph{$x$-diagonal-forbidding} graph for $x\in V(K)$ and of a \emph{bichromatic-forbidding} graph.
With these definitions in place, we can restate Lemma~\ref{lemma-topre4} as follows.

\begin{corollary}\label{cor-toext}
A graph $G$ is not 4-colorable and belongs to $\CC$ if and only if there exists a rainbow-forbidding plane graph $H$ with the outer face bounded
by a $4$-cycle $u_1u_2v_1v_2$ such that $G$ is isomorphic to the graph obtained from $H$ by adding the edges $u_1v_1$ and $u_2v_2$.
\end{corollary}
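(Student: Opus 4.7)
The plan is to establish the biconditional directly from Lemma~\ref{lemma-topre4} together with the redrawing construction described in the paragraph immediately preceding the corollary. Most of the technical content has already been assembled there, so both directions are essentially one-liners once that machinery is in hand.

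For the ``only if'' direction, I would take a non-4-colorable $G\in\CC$ with a fixed drawing in the plane with one crossing. Lemma~\ref{lemma-topre4} guarantees that the four endpoints $u_1,v_1,u_2,v_2$ of the crossing edges induce a $K_4$ in $G$, and the rerouting trick then produces a plane graph $H := G-\{u_1v_1,u_2v_2\}$ whose outer face is bounded by the 4-cycle $u_1u_2v_1v_2$, with $G$ recovered by adding back the two diagonals. The remaining step is to verify $H$ is rainbow-forbidding: any 4-coloring of $H$ whose restriction to the outer cycle is rainbow assigns four pairwise distinct colors to $u_1,u_2,v_1,v_2$, so reinserting the diagonals $u_1v_1$ and $u_2v_2$ creates no monochromatic edge and yields a proper 4-coloring of $G$ itself, contradicting non-4-colorability.

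For the ``if'' direction, given a rainbow-forbidding plane graph $H$ with outer face bounded by $u_1u_2v_1v_2$, I would form $G$ by adding the edges $u_1v_1$ and $u_2v_2$. Drawing both edges as chords through the outer face of $H$ produces a plane drawing of $G$ with exactly one crossing, so $G\in\CC$. In $G$ the four vertices $u_1,u_2,v_1,v_2$ induce a $K_4$, so any proper 4-coloring of $G$ would assign them four distinct colors; its restriction to the outer 4-cycle of $H$ would then be a rainbow coloring extending to $H$, contradicting the rainbow-forbidding hypothesis. Therefore $G$ is not 4-colorable.

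I do not foresee any genuine obstacle here: the corollary is a translation of Lemma~\ref{lemma-topre4} into the ``rainbow-forbidding'' vocabulary, and both directions hinge on the same simple observation that a rainbow coloring of the outer 4-cycle is exactly what is needed to insert the two crossing diagonals without creating a conflict. The only small point worth mentioning is that the two diagonals being added must not already be edges of $H$; this is automatic in the forward direction, where we explicitly delete them, and in the backward direction it is built into the implicit convention that ``adding an edge'' produces a simple graph.
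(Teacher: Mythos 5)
Your proof is correct and follows exactly the approach the paper intends: the corollary is left without an explicit proof because it is a direct restatement of Lemma~\ref{lemma-topre4} combined with the redrawing discussion in the preceding paragraph, and your two directions recapitulate precisely that argument. The only minor remark is that your closing caveat about the diagonals not already being edges of $H$ is unnecessary even in the backward direction, since the $K_4$/rainbow argument you give works regardless; in fact one can check that a rainbow-forbidding plane graph with outer $4$-cycle $K$ can never contain a chord of $K$.
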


Thus, the problem of characterizing non-4-colorable graphs in $\CC$ is equivalent to the rainbow precoloring extension problem
in plane graphs with the outer face of size four.  Next, let us note a standard observation on triangulating planar graphs.
\begin{lemma}\label{lemma-cantri}
Let $H$ be a plane graph such that every triangle bounds a face and the outer face has length greater than three.
Then there exists a supergraph $H'$ of $H$
such that $V(H')=V(H)$, the outer face of $H$ is also the outer face of $H'$, all internal faces of $H'$ are triangles,
and every triangle in $H'$ bounds a face.
\end{lemma}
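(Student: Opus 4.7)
The plan is to induct on the face excess $\Phi(H) := \sum_{f} (\ell(f) - 3)$ summed over internal faces $f$ of $H$. The base case $\Phi(H) = 0$ is immediate: every internal face is already a triangle and I take $H' := H$.

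For the inductive step, I fix an internal face $f$ of length $k \ge 4$ with boundary walk $C = v_1 v_2 \cdots v_k$. I aim to add a single chord of $C$, drawn inside $f$, whose addition preserves the outer face and the property that every triangle bounds a face. Call such a chord \emph{addable}. Adding one addable chord strictly decreases $\Phi$, so the induction hypothesis finishes the proof. A chord $v_iv_j$ fails to be addable for exactly one of two reasons: either $v_iv_j \in E(H)$ already (and is then drawn in the exterior of $f$), or there is a common neighbor $w \in N(v_i)\cap N(v_j)$ for which the triangle $v_iv_jw$ present in $H + v_iv_j$ does not bound a face—equivalently, the edges $v_iw, v_jw$ leave $f$ and the triangle is not one of the two sub-triangles into which $v_iv_j$ splits $f$.

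The core claim is that some short chord $v_iv_{i+2}$ (indices mod $k$) is always addable. The crucial topological fact is that two chords whose endpoints alternate on the boundary of a disk must cross, so they cannot both appear in a plane graph drawn inside that disk. Applied to the exterior disk of $f$, this rules out two consecutive already-present short chords $v_iv_{i+2}, v_{i+1}v_{i+3}$, and also forces $w_i = w_{i+1}$ whenever two consecutive short chords are both blocked only by external common neighbors $w_i, w_{i+1}$. For $k = 4$ the only two chords are $v_1v_3$ and $v_2v_4$; if both failed, one concludes that a single vertex $w$ is adjacent to all four vertices of $C$ from outside, the four triangles $wv_iv_{i+1}$ must all bound faces by hypothesis, and so the outer face of $H$ is one of those triangles—contradicting the hypothesis that the outer face has length greater than three.

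For $k \ge 5$, the short chords split into E-bad indices (where $v_iv_{i+2} \in E(H)$), forming an independent set in the cyclic indexing, and W-bad indices (blocked by external common neighbors). Within a run of consecutive W-bad indices, the external common neighbors all coincide; if every short chord is W-bad, the common vertex $w$ ends up adjacent to all of $C$ and we reach the same outer-face contradiction as in the $k=4$ case. If E-bad chords are present, each such chord $v_iv_{i+2}$ forces a pocket triangular face $v_iv_{i+1}v_{i+2}$, which pins the pocket vertex $v_{i+1}$ to have degree exactly two with neighbors $v_i, v_{i+2}$; one then locates a short chord between two pocket vertices (or between a pocket vertex and a suitable neighbor) whose common-neighbor set reduces to the single expected middle vertex, hence is addable. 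The main obstacle is this case distinction when the E-bad indices are distributed around $C$ in an arbitrary pattern; the bookkeeping is fiddly, but the whole analysis rests on just two principles—alternating-endpoint planarity in the exterior disk of $f$, and the forcing of degree-two pocket vertices by the no-separating-triangle hypothesis.
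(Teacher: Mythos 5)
There is a genuine gap, in fact two. First, your whole argument presupposes that every internal face of length at least four is bounded by a simple cycle: you speak of "chords of $C$", of the "exterior disk of $f$", and of alternating endpoints on that disk. The lemma is stated for an arbitrary plane graph in which every triangle bounds a face, so the boundary walk of an internal face may be disconnected or may repeat vertices (cut vertices); in those cases "$v_iv_{i+2}$" may not even be a legitimate new edge and the interleaving argument does not apply. The paper disposes of exactly these degenerate cases first (add an edge between two components of the boundary, or use the pair $v_1,v_3$ around a cut vertex $v_2$), and only then assumes the boundary is a cycle; your induction is missing this entirely.

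Second, and more seriously, your classification of obstructed short chords into E-bad and W-bad is not exhaustive, and the missing case is precisely the delicate one. A short chord $v_iv_{i+2}$ that is not an edge of $H$ can be blocked by a common neighbor that lies \emph{on} $C$ itself, joined to $v_i$ and $v_{i+2}$ by long chords of $C$; such a blocker is neither "the chord already exists" nor an "external common neighbor", so your run-coincidence argument and your pocket-vertex analysis never see it. Concretely, take a heptagonal internal face $v_1\ldots v_7$ with the chords $v_6v_1$, $v_6v_2$, $v_6v_3$ drawn outside it, the outer face being the $4$-cycle $v_3v_4v_5v_6$; every triangle bounds a face, yet the short chord $v_1v_3$ is neither E-bad nor W-bad in your sense, and adding it would create the non-facial triangle $v_1v_3v_6$. (An addable short chord still exists there, e.g.\ $v_2v_4$, but your sketch does not prove one must; choosing the right place next to a chord requires care, e.g.\ taking a chord cutting off a minimal arc.) You also explicitly leave the case analysis when E-bad indices are present as "fiddly bookkeeping", so the inductive step for $k\ge 5$ is not actually established. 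By contrast, the paper's proof does not restrict attention to short chords: it treats "the boundary cycle has some chord" as a single case, handles the chordless case by the common-apex/wheel argument against the outer-face hypothesis (which matches your W-bad analysis), and it never needs your pocket-vertex bookkeeping.
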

\begin{proof}
Let us argue that if $f$ is an internal face of $H$ of length at least four, then there exist
distinct non-adjacent vertices $u$ and $v$ incident with $f$ such that every triangle in $H+uv$ (with the edge $uv$ drawn inside $f$)
bounds a face.
\begin{itemize}
\item If the boundary of $f$ is disconnected, then we can simply add an edge between different components; such an edge clearly
is not contained in any triangle in the resulting graph, and thus every triangle still bounds a face.
\item Otherwise, let $W=v_1v_2\ldots v_k$ be the closed walk bounding
$f$.  If say $v_2$ is a cut vertex separating $v_1$ from $v_3$ in $H$, then we can let $u=v_1$ and $v=v_3$; the only triangle
in $H+v_1v_3$ containing the edge $v_1v_3$ is $v_1v_2v_3$, and the edge $v_1v_3$ can be drawn inside $f$ so that this triangle bounds
a face.
\item Hence, we can assume that $W$ does not contain such a cut vertex, and thus $W$ is a cycle.  Suppose that say $v_2v_i$,
where $i\in\{4,5,\ldots,k\}$, is an edge of $H$.  By planarity, $v_1$ and $v_3$ are not adjacent in $H$.  Moreover, the vertices $v_1$ and $v_3$ have no common
neighbors except for $v_2$ if $k\ge 5$ and except for $v_2$ and $v_4$ if $k=4$.  Hence, every triangle in $H+v_1v_3$ bounds a face.
\item Consequently, we can assume that $W$ is an induced cycle.  Suppose next that each pair of distinct non-consecutive vertices of $W$
has a common neighbor not in $W$.  By planarity, it follows that all vertices in $W$ have a common neighbor outside of $W$.  However,
since the outer face of $H$ has length greater than three, this implies that $H$ contains a non-facial triangle, which is a contradiction.
\item Therefore, there exist distinct non-consecutive vertices $u$ and $v$ of $W$ that do not have a common neighbor outside of $W$.
Then all triangles in $H+uv$ bound faces.
\end{itemize}

Thus, we can repeatedly add edges to $H$ while preserving its outer face and the invariant that every triangle bounds a face,
until all internal faces become triangles.
\end{proof}

Using this lemma, we can reformulate Conjecture~\ref{conj-main-strong} in terms of precoloring extension in plane graphs
(a vertex of a plane graph is \emph{internal} if it is not incident with the outer face, and \emph{external} otherwise);
as well as prove the equivalence with Conjecture~\ref{conj-main-strong-tria}.
\begin{theorem}\label{thm-equiv}
The following claims are equivalent for every positive integer~$n$:
\begin{itemize}
\item[\textnormal{(i)}] Let $G$ be a graph with $|V(G)|=n$ and without separating triangles. If $G$ has a drawing in the plane with one crossing such that all vertices not incident with the crossed edges
have degree at least five, then $G$ is 4-colorable.
\item[\textnormal{(ii)}] Let $H'$ be a plane graph with $|V(H')|=n$ such that every triangle in $H'$ bounds a face and all internal faces of $H'$ are triangles.
If $H'$ is rainbow-forbidding, then $H'$ has an internal vertex of degree at most four.
\item[\textnormal{(iii)}] Let $H$ be a plane graph with $|V(H)|=n$ such that every triangle in $H$ bounds a face.
If $H$ is rainbow-forbidding, then $H$ has an internal vertex of degree at most four.
\item[\textnormal{(iv)}] Every non-4-colorable graph $G\in \CC_0$ with $n$ vertices has an internal vertex of degree four.
\end{itemize}
\end{theorem}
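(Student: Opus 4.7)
The plan is to prove the equivalence via the cycle of implications (i) $\Rightarrow$ (iv) $\Rightarrow$ (ii) $\Rightarrow$ (iii) $\Rightarrow$ (i). The two main tools are Corollary~\ref{cor-toext}, which packages every non-$4$-colorable graph in $\CC$ as $H + \{u_1v_1, u_2v_2\}$ for a rainbow-forbidding plane graph $H$ with outer $4$-cycle $u_1u_2v_1v_2$, and Lemma~\ref{lemma-cantri}, which triangulates such an $H$ while preserving the property that every triangle bounds a face.

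For (i) $\Rightarrow$ (iv), I will take a non-$4$-colorable $G = T + xy \in \CC_0$ with $n$ vertices and assume for contradiction that no internal vertex has degree $4$; since $T$ is $4$-connected its minimum degree is at least $4$, so all internal vertices have degree $\geq 5$ in $G$. Then $G$ has no separating triangles: triangles of $T$ are facial, and any triangle $xyz$ using the new edge would make $\{x,y,z\}$ a forbidden $3$-cut of $T$. Hypothesis (i) applied to the one-crossing drawing of $G$ then forces $G$ to be $4$-colorable, a contradiction. For (ii) $\Rightarrow$ (iii), given $H$ as in (iii), I will apply Lemma~\ref{lemma-cantri} to obtain $H'$ on the same vertex set with the same outer $4$-face, all internal faces triangular, and every triangle bounding a face. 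Since rainbow-forbidding is preserved under adding edges, $H'$ qualifies for (ii); the internal vertex of degree $\leq 4$ it produces has no greater degree in $H$.

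For (iii) $\Rightarrow$ (i), given a presumed non-$4$-colorable $G$ satisfying the hypotheses of (i), Corollary~\ref{cor-toext} writes $G = H + \{u_1v_1, u_2v_2\}$ with $H$ rainbow-forbidding. If every triangle of $H$ bounds a face, then (iii) gives an internal vertex of $H$ of degree $\leq 4$, which is non-incident with the crossing in $G$ and has the same degree there, contradicting the minimum-degree hypothesis. Otherwise $H$ has a non-facial triangle $T$; the outer face of $H$ lies in one of the two regions bounded by the Jordan curve $T$, and since both diagonals $u_1v_1, u_2v_2$ are drawn inside this outer face they too lie on that side, so removing $V(T)$ disconnects the vertices on the other side of $T$ from everything else. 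Hence $T$ is a separating triangle of $G$, contradicting the no-separating-triangle hypothesis.

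The main obstacle is (iv) $\Rightarrow$ (ii). Given $H$ as in (ii), I set $G = H + \{u_1v_1, u_2v_2\}$, which is non-$4$-colorable by Corollary~\ref{cor-toext}. If at least one diagonal pair, say $\{u_1,v_1\}$, has no common internal neighbor in $H$, then $T = H + u_1v_1$ is a plane triangulation whose only new triangles $u_1u_2v_1$ and $u_1v_1v_2$ are faces (the remaining triangles come from $H$ and are already facial), so $T$ is $4$-connected and $G = T + u_2v_2 \in \CC_0$, and (iv) immediately supplies the needed internal vertex of degree $4$. The subtle case is when both diagonal pairs admit common internal neighbors $w_1, w_2$: the paths $u_1 w_1 v_1$ and $u_2 w_2 v_2$ lie inside the closed disk bounded by the outer $4$-cycle and separate opposite pairs of corners, so by planarity they must share a vertex, and since each has length two the only possibility is $w_1 = w_2 = w$ adjacent to all four external vertices. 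The hypothesis that every triangle is facial then makes $wu_1u_2, wu_2v_1, wv_1v_2, wv_2u_1$ all faces of $H$, exhausting the rotation at $w$ and giving $\deg(w) = 4$, so $w$ itself witnesses the conclusion of (ii).
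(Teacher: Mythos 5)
Your proposal is correct and takes essentially the same route as the paper: your implications (ii)$\Rightarrow$(iii) via Lemma~\ref{lemma-cantri}, (iii)$\Rightarrow$(i) via Lemma~\ref{lemma-topre4}/Corollary~\ref{cor-toext} (with $H$ taken from the given drawing), and (iv)$\Rightarrow$(ii) via the case analysis on common internal neighbors of the two diagonal pairs---yielding either a $4$-connected triangulation and hence a graph in $\CC_0$, or a vertex adjacent to all four outer vertices whose degree is forced to be four---are exactly the paper's arguments. The only organizational difference is that you close the equivalence through a detailed (i)$\Rightarrow$(iv) instead of the paper's direct (i)$\Rightarrow$(ii), which is logically fine; just add the one-line observation (as the paper does) that the outer $4$-cycle of $H$ is induced, so that adding the diagonals produces a simple graph.
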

\begin{proof}
We prove several implications:
\begin{description}
\item[(i)$\Rightarrow$(ii)] Let $H'$ be a rainbow-forbidding plane graph with $n$ vertices such that every triangle in $H'$ bounds a face and all internal faces of $H'$ are triangles.
Let $K=x_1x_2x_3x_4$ be the 4-cycle bounding the outer face of $H'$.  Note that the cycle $K$ is induced: Otherwise, since every triangle in $H'$ bounds a face,
$H'$ would consist of $K$ together with one additional edge, and $H'$ would not be rainbow-forbidding.
Consider the graph $G=H'+\{x_1x_3,x_2x_4\}$, drawn in the plane so that only the edges $x_1x_3$ and $x_2x_4$ cross.

We claim that $G$ does not contain any separating triangles.  Indeed, consider any triangle $T$ in $G$.  By symmetry, we can
assume that $T$ does not contain the edge $x_2x_4$, and thus $H'\cup T$ is a plane triangulation.  In particular, $H'\cup T$ is $3$-connected.
If $T$ is a separating triangle in $H'\cup T$, then the planarity and $3$-connectedness implies that $H'-V(T)$ has exactly
two components $K_1$ and $K_2$, each induced by all vertices of $H'$ drawn in one of the faces of $T$.  Since every triangle in $H'$ bounds
a face, it follows that $x_1x_3\in E(T)$, $x_2\in V(K_1)$ and $x_4\in V(K_2)$.  However, since the edge $x_2x_4$ of $G$ connects $K_1$ with $K_2$,
this implies that $G-V(T)$ has only one component,
and thus the triangle $T$ is not separating.

Since $H'$ is rainbow-forbidding, the graph $G$ is not 4-colorable, and thus by (i), there exists a vertex $v\in V(G)\setminus V(K)$
such that $4\ge \deg_G v=\deg_{H'} v$.  Hence, (i) implies (ii).

\item[(ii)$\Rightarrow$(iii)] Consider a rainbow-forbidding plane graph $H$ with $n$ vertices such that every triangle bounds a face, and let $K=x_1x_2x_3x_4$ be the 4-cycle bounding the outer face
of $H$.  By Lemma~\ref{lemma-cantri}, there exists a supergraph $H'$ of $H$ such that $V(H')=V(H)$, the outer face of $H'$ is bounded by $K$,
all internal faces of $H'$ are triangles, and every triangle in $H'$ bounds a face.  Clearly $H'$ is also rainbow-forbidding.
By (ii), $H'$ has an internal vertex $v$ of degree at most four.  Clearly $v$ is also an internal vertex of $H$ and its degree in $H$ is at most as large as in $H'$.
Hence, (ii) implies (iii).

\item[(iii)$\Rightarrow$(i)] Suppose now that $G$ is a graph such that $|V(G)|=n$, $G$ does not contain separating triangles, and $G$ has a drawing in the plane with one crossing such that all vertices not incident with the crossed edges
have degree at least five.  If the set $X$ of vertices incident with the crossed edges $e_1$ and $e_2$ of $G$ does not induce a clique in $G$, then $G$ is $4$-colorable by Lemma~\ref{lemma-topre4}.

Otherwise, the graph $H=G-\{e_1,e_2\}$ has a plane drawing where the 4-cycle on $X$ bounds the outer face.  All internal vertices of $H$
have the same degree as in $G$, i.e., at least five.  Moreover, every triangle in $H$ bounds a face, as otherwise it would be a separating triangle in $G$.
By (iii), $H$ is not rainbow-forbidding, and thus $H$ has a 4-coloring $\varphi$ whose restriction to $H[X]$ has the rainbow type.  Then $\varphi$ is also a proper 4-coloring of $G$.
Therefore, (iii) implies (i).

\item[(iv)$\Rightarrow$(ii)] Consider a rainbow-forbidding plane graph $H'$ with $n$ vertices such that every
triangle in $H'$ bounds a face and all internal faces of $H'$ are triangles.  Let $K=x_1x_2x_3x_4$ be the 4-cycle bounding the outer face of $H'$.
Note that $K$ is an induced cycle, as otherwise we would have $V(H')=V(K)$ and $H'$ would not be rainbow-forbidding.
For $i\in\{1,2\}$, let $G_i=H'+x_ix_{i+2}$, so that $G_i$ is a plane triangulation.  If $G_i$ is 4-connected, then $G_i+x_{3-i}x_{5-i}\in \CC_0$ has an internal
vertex of degree at most four by (iv), and so does $H'$.

Otherwise, for both $i\in\{1,2\}$, there must exist a separating triangle in $G_i$, necessarily
containing the edge $x_ix_{i+2}$ since all other triangles bound faces by the assumptions on $H'$.  By planarity, this is only possible if $H'$ contains a vertex $v$
adjacent to $x_1$, \ldots, $x_4$.  Since all triangles in $H'$ bound faces, we conclude that $\deg v=4$.  Therefore, (iv) implies (ii).
\end{description}
Finally, it is clear that (i) implies (iv).
\end{proof}

In particular, to verify Conjecture~\ref{conj-main-strong} for graphs with $n$ vertices, it suffices it verify the claim from Theorem~\ref{thm-equiv}(ii) for graphs with $n$ vertices.
For small $n$, this can be done by a computer-assisted enumeration, as we discuss in the following section.

Next, let us argue that Conjecture~\ref{conj-main-strong} is also equivalent to a claim about diagonal-forbidding graphs
(we say that a plane graph is \emph{diagonal-forbidding} if it is $x$-diagonal-forbidding for a vertex $x$ incident with its outer
face).  For an integer $n\ge 5$, let $D(n)$ be the following claim:
\begin{itemize}
\item[$D(n)$:] For every plane graph $H$ such that $5\le |V(H)|\le n$, all internal faces of $H$ are triangles, and
every triangle in $H$ bounds a face, if $H$ is diagonal-forbidding, then $H$ contains an internal vertex of degree at most four.
\end{itemize}
Let $R(n)$ be the analogous claim for rainbow-forbidding graphs.
\begin{theorem}\label{thm-equiv2}
For every integer $n\ge 5$, $R(n+1)$ implies $D(n)$ and $D(n+1)$ implies $R(n)$.
\end{theorem}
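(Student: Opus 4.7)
The plan is to prove both implications by contrapositive using a single surgery that transforms a counterexample to one claim into a counterexample to the other. Fix a plane graph $H$ satisfying the hypotheses of $R(n)$ or $D(n)$ with outer $4$-cycle $K = x_1x_2x_3x_4$ and all internal vertices of degree at least $5$. By relabeling $K$ we may assume that the relevant diagonal type is the $x_1$-diagonal one. I will construct a plane graph $H^*$ on $|V(H)| \pm 1$ vertices (in particular, with $|V(H^*)| \le n+1$) retaining all the structural hypotheses and such that $H$ is $x_1$-diagonal-forbidding if and only if $H^*$ is rainbow-forbidding. Since the correspondence works in both directions, this simultaneously gives $R(n+1) \Rightarrow D(n)$ and $D(n+1) \Rightarrow R(n)$.

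A preliminary observation is that $\deg_H(x_3) \ge 3$: a degree-$2$ vertex $x_3$ would force the unique internal face at $x_3$ to be the triangle $x_2x_3x_4$, which needs the forbidden chord $x_2x_4$ of $K$. The construction then splits on $\deg_H(x_3)$. In Case A, with $\deg_H(x_3) \ge 4$, I add a new vertex $v$ in the outer face adjacent to $x_2, x_3, x_4$, obtaining $H^+$ with new outer face $K^+ = x_1x_2vx_4$; the former outer region is replaced by the two new triangular faces $vx_2x_3$ and $vx_3x_4$ together with $K^+$, no new non-facial triangle is created (the only candidate $vx_2x_4$ needs the missing edge $x_2x_4$), and the newly internal vertex $x_3$ has degree $\deg_H(x_3)+1 \ge 5$. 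In Case B, with $\deg_H(x_3) = 3$, the unique internal neighbor $y$ of $x_3$ satisfies $x_2y, yx_4 \in E(H)$ (from the two triangular faces at $x_3$); set $H^- = H - x_3$ with new outer face $K^- = x_1x_2yx_4$. No internal-vertex degree of $H^-$ changes, since $y$ becomes external and $x_3$'s only internal neighbor was $y$.

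The main obstacle is Case B, where I must verify that $K^-$ is genuinely an induced $4$-cycle and thus a face of $H^-$. The non-chord $x_2x_4$ is inherited from $H$. For the potential chord $x_1y$, I would argue that if $x_1y \in E(H)$ then $y$ is adjacent to all four vertices of $K$, and by the ``every triangle bounds a face'' hypothesis the four facial triangles $yx_ix_{i+1}$ tile the interior of $K$, forcing $|V(H)| = 5$. But on $5$ vertices the unique internal vertex has degree $3$, contradicting the degree-$\ge 5$ hypothesis. Hence $|V(H)| \ge 6$, $|V(H^-)| \ge 5$, and $x_1y \notin E(H)$, as required.

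Finally, the forbidding correspondence is a short pigeonhole argument: in any rainbow $4$-coloring of the new outer cycle, the vertex $x_3$ (which is internal in $H^+$, or reinserted into $H^-$ to recover $H$) has three neighbors on the new outer face whose colors are three of the four available, so $\varphi(x_3)$ must be the fourth color, which equals $\varphi(x_1)$; restricted to $K = x_1x_2x_3x_4$ this is precisely an $x_1$-diagonal pattern. The converse extension, from an $x_1$-diagonal coloring of $K$ in $H$ to a rainbow coloring of $H^*$, is analogous (set $v$'s color to the fourth color in Case A; the coloring on $H^-$ is already rainbow in Case B by the same pigeonhole). This bijection between rainbow $4$-colorings of $H^*$ and $x_1$-diagonal $4$-colorings of $H$ yields both implications of the theorem.
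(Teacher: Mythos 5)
Your proposal is correct and follows essentially the same route as the paper: the same surgery (delete the chosen outer vertex when it has degree three, otherwise add an apex over it and its two neighbours on $K$) together with the same pigeonhole observation that the degree-three/apex situation forces the fourth colour, the only differences being that you argue by contrapositive and perform the surgery at the vertex opposite $x_1$, which is equivalent by the symmetry of the diagonal type. Only trivial polish is needed: in the $5$-vertex degenerate case the unique internal vertex has degree four (not three), and the inference ``hence $|V(H)|\ge 6$'' should be stated as the separate easy observation that a $5$-vertex graph satisfying the hypotheses always has an internal vertex of degree four, exactly as the paper notes.
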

\begin{proof}
Let us prove that $R(n+1)$ implies $D(n)$.
Let $H$ be a plane graph with at least five but at most $n$ vertices such that all internal faces of $H$ are triangles and
every triangle in $H$ bounds a face, with the outer face bounded by a 4-cycle $K$.  Suppose that $H$ is $x$-diagonal-forbidding for a vertex $x\in V(K)$.
We need to argue that if $R(n+1)$ holds, then $H$ contains an internal vertex of degree at most four.

Note that $K$ is an induced cycle in $H$, since $H$ has at least five vertices and every triangle in $H$ bounds a face.
Furthermore, note that this implies that every vertex of $K$ has degree at least three in $H$.
If $|V(H)|=5$, then $H$ consists of $K$ and an internal vertex of degree four, and the conclusion holds (and also,
$H$ actually is not diagonal-forbidding).  

Hence, suppose that $|V(H)|\ge 6$.  If $\deg x=3$, then let $H'=H-x$.
Otherwise, let $H'$ be the graph obtained from $H$ by adding a vertex $x'$ drawn in the outer face of $H$ and adjacent
to $x$ and the two neighbors of $x$ in $K$.  Observe than in both cases, the outer face of $H'$ is bounded by a $4$-cycle, $H'$ is rainbow-forbidding, it has at
least five but at most $n+1$ vertices, all internal faces of $H'$ are triangles, and every triangle in $H'$ bounds a face.
By $R(n+1)$, there exists an internal vertex $v$ of $H'$ of degree at most four.  Note that $v\neq x$, since
if $x\in V(H')$, then $x$ has degree at least four in $H$ and at least five in $H'$.  Hence, $v$ is also
an internal vertex of $H$ of degree at most four.

The implication $D(n+1)\Rightarrow R(n)$ is proved analogously.
\end{proof}

Together with Theorem~\ref{thm-equiv}, this gives the following consequence.
\begin{corollary}\label{cor-combined}
The following claims are equivalent:
\begin{itemize}
\item Conjecture~\ref{conj-main-strong} holds.
\item $D(n)$ holds for every $n\ge 5$.
\item $R(n)$ holds for every $n\ge 5$.
\end{itemize}
\end{corollary}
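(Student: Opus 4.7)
The plan is to combine Theorems~\ref{thm-equiv} and~\ref{thm-equiv2} with a small amount of bookkeeping. Observe that Conjecture~\ref{conj-main-strong} is precisely the universal quantification over positive integers $n$ of statement~(i) of Theorem~\ref{thm-equiv}. Using the equivalence (i)$\Leftrightarrow$(ii) from that theorem, Conjecture~\ref{conj-main-strong} is therefore equivalent to Theorem~\ref{thm-equiv}(ii) holding for every $n\ge 1$.

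Next I would argue that this last assertion is equivalent to $R(n)$ holding for every $n\ge 5$. For each fixed $n\ge 5$, the statement $R(n)$ is literally the conjunction of Theorem~\ref{thm-equiv}(ii) over graph sizes $5,6,\ldots,n$, so one direction is immediate. The other direction requires only a vacuity check for $n\le 4$: any plane graph $H'$ on at most four vertices whose outer face is bounded by a 4-cycle $K$, all of whose internal faces are triangles, and in which every triangle bounds a face, must satisfy $V(H')=V(K)$ with at most one chord of $K$, and a direct inspection shows that no such graph is rainbow-forbidding. Hence Theorem~\ref{thm-equiv}(ii) is automatic for $n\le 4$, and the two quantified assertions coincide.

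Finally I would apply Theorem~\ref{thm-equiv2} to connect $R$ with $D$. If $R(n)$ holds for every $n\ge 5$, then in particular $R(n+1)$ holds for every $n\ge 5$, and the implication $R(n+1)\Rightarrow D(n)$ yields $D(n)$ for every $n\ge 5$. Conversely, if $D(n)$ holds for every $n\ge 5$, then $D(n+1)$ holds for every $n\ge 5$, and $D(n+1)\Rightarrow R(n)$ yields $R(n)$ for every $n\ge 5$. The shift $n\mapsto n+1$ stays within the admissible range since $n\ge 5$ implies $n+1\ge 5$, so no boundary issue arises. I do not anticipate any substantive obstacle; the entire argument is bookkeeping on top of Theorems~\ref{thm-equiv} and~\ref{thm-equiv2}, with the only item demanding a moment of care being the vacuity check at the boundary $n\le 4$.
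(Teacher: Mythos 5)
Your proof is correct and takes the same approach the paper intends: the paper simply asserts that the corollary follows from Theorems~\ref{thm-equiv} and~\ref{thm-equiv2}, and your write-up supplies exactly the bookkeeping (unrolling the cumulative definitions of $R(n)$ and $D(n)$, the vacuity check for $n\le 4$, and the index shift $n\mapsto n+1$) that is being implicitly invoked.
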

Interestingly, the analogous claim for bichromatic-forbidding graphs is false, as we discuss at the end of Section~\ref{sec-enum-cand}.

The proof of Lemma~\ref{lemma-wheel} follows an idea similar to the proof of Theorem~\ref{thm-equiv}.
For a future use, we prove a somewhat stronger claim, which implies Lemma~\ref{lemma-wheel} by Lemma~\ref{lemma-topre4}.

\begin{lemma}\label{lemma-wheel2}
Let $H$ be a rainbow- or diagonal-forbidding plane graph such that every triangle in $H$ bounds a face and $|V(H)|\ge 5$.
If Conjecture~\ref{conj-main-strong} holds, then there exists an internal vertex $v\in V(H)$ of degree four such that all
incident faces have length three.
\end{lemma}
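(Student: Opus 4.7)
The plan is to reduce to the triangulation setting of Corollary~\ref{cor-combined} by applying Lemma~\ref{lemma-cantri}, use the corollary to extract a degree-four internal vertex of the triangulation, and then argue that this vertex can be chosen so that its entire 4-wheel neighborhood is already present in $H$.  By Theorem~\ref{thm-equiv2} and Corollary~\ref{cor-combined}, Conjecture~\ref{conj-main-strong} implies that both $R(n)$ and $D(n)$ hold for every $n\ge 5$, so the rainbow-forbidding and diagonal-forbidding cases of the lemma can be handled uniformly.

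First, I apply Lemma~\ref{lemma-cantri} to produce a plane supergraph $H'$ of $H$ on the same vertex set and with the same outer 4-cycle $K=x_1x_2x_3x_4$, such that every internal face of $H'$ is a triangle and every triangle of $H'$ still bounds a face.  Since $H\subseteq H'$ and the outer face is unchanged, $H'$ inherits the rainbow- (resp.\ diagonal-) forbidding property from $H$.  Applying $R(|V(H)|)$ (resp.\ $D(|V(H)|)$) to $H'$ yields an internal vertex $v$ with $\deg_{H'}v\le 4$.  Degree three can be ruled out: the three neighbors of such a $v$ would form a triangle enclosing $v$ and its three incident facial triangles on one side and, since $|V(H)|\ge 5$, at least one further vertex on the other side, so this triangle bounds no face of $H'$, contradicting the construction.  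Hence $\deg_{H'}v=4$, the link of $v$ in $H'$ is a 4-cycle $v_1v_2v_3v_4$, and the four faces of $H'$ at $v$ are the triangles $vv_iv_{i+1}$.

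The final and most substantive step is to transfer $v$ from $H'$ back to $H$: I must show $\deg_Hv=4$ and each link edge $v_iv_{i+1}\in E(H)$, so that each of the four triangles $vv_iv_{i+1}$, being a triangle of $H$, bounds a face of $H$ by hypothesis.  The strategy is to exploit the freedom in Lemma~\ref{lemma-cantri}: among all valid triangulations $H'$, pick one minimizing the number of edges of $E(H')\setminus E(H)$ incident to an internal degree-four vertex of $H'$.  For such an extremal $H'$, if some spoke $vv_i$ or link edge $v_iv_{i+1}$ of the witness $v$ were missing from $H$, the offending edge would sit inside a non-triangular face of $H$ incident to $v$, and the case analysis of Lemma~\ref{lemma-cantri} could be invoked to retriangulate that face with added edges avoiding the neighborhood of $v$, strictly decreasing the measure and contradicting minimality.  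Executing this local rerouting while simultaneously preserving both the ``every triangle of the new $H'$ bounds a face'' invariant and the existence of a degree-four internal vertex is the main technical obstacle of the proof.
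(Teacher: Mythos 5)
Your steps 1--3 (triangulate via Lemma~\ref{lemma-cantri}, apply $R(n)$ or $D(n)$ through Corollary~\ref{cor-combined}, rule out degree three because the link of a degree-three internal vertex would be a non-facial triangle) are fine, but the whole content of the lemma lies in the transfer step you label ``the main technical obstacle,'' and that step is not proved --- it is only a hope. The extremal argument as stated does not close: if a spoke or link edge of the witness $v$ lies in $E(H')\setminus E(H)$ and you retriangulate the relevant face of $H$ ``avoiding the neighborhood of $v$,'' then $v$ typically ceases to be a degree-four vertex of the new triangulation; the degree-four internal vertex guaranteed by $R(n)$/$D(n)$ in the new triangulation may be a completely different vertex, with its own added edges, so your measure has no reason to decrease and no contradiction is reached. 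More fundamentally, a chord-triangulation cannot control where the low-degree vertex lands: a vertex on the boundary of a long face of $H$ can have degree four in $H'$ while having degree two or three in $H$, and nothing in your setup rules this out, since you never establish that internal vertices of $H$ have degree at least four (a fact which does not follow from $H$ being forbidding without further argument).

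The paper avoids exactly this trap by not triangulating with chords at all. It takes a minimal counterexample, first eliminates separating cliques (gluing a 4-coloring of the outer piece, obtained from the Four Color Theorem, onto a coloring of the piece containing the outer 4-cycle) and deduces that every internal vertex has degree at least four and every face boundary is an induced cycle; then it fills each internal face $f$ of length at least four with a gadget $F_{|f|}$ (the icosahedron minus an edge for $|f|=4$, the $|f|$-wheel for $|f|\ge 5$) whose internal vertices all have degree at least five. Filling strictly increases the degree of every vertex on the boundary of a filled face, so the degree-at-most-four internal vertex produced by $R$/$D$ in the filled graph is forced to be a vertex of $H$ none of whose incident faces was filled; combined with the internal minimum degree four it has degree exactly four and all incident faces of $H$ are triangles. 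Your proposal has no counterpart to the separating-clique/minimum-degree preprocessing nor to the degree-raising gadgets, and without some replacement for these ideas the chord-triangulation route does not yield the stated conclusion.
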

\begin{proof}
For any integer $k\ge 4$, let $F_k$ be a plane graph such that the outer face of $F_k$ is bounded by an induced cycle of length $k$,
all triangles in $F_k$ bound faces, all internal faces of $F_k$ have length three, and all internal vertices of $F_k$ have degree at least five.  For example, we can define $F_4$ to
be the graph obtained from the icosahedron by deleting an edge, and for $k\ge 5$, we can let $F_k$ be the $k$-wheel.
The claim of the lemma essentially follows by filling in each internal face $f$ of $H$ of length $|f|\ge 4$ by a copy of $F_{|f|}$
and applying $D(n)$ or $R(n)$ to the resulting $n$-vertex graph $H'$.  However, we need to be slightly more careful
in order to argue that $H'$ satisfies the assumptions of $D(n)$ or $R(n)$.

Suppose for a contradiction that $H$ is a counterexample to Lemma~\ref{lemma-wheel2} with the smallest number of vertices.
Let $K$ be the $4$-cycle bounding the outer face of $H$.  Note that the cycle $K$ is induced, since $|V(H)|\ge 5$ and every triangle in $H$ bounds a face.

Let us consider the case that $H$ contains a separating clique $Q$.
There exists a component $C$ of $H-V[Q]$ such that the subgraph $H_1=H[V(Q)\cup V(C)]$ satisfies $K\subseteq H_1$.
Let $H_2=H-V(C)$.  By the Four Color Theorem, $H_2$ has a 4-coloring $\varphi_2$.  Since $Q$ is a clique, for any 4-coloring $\varphi_1$ of $H_1$,
we can permute the colors in the coloring $\varphi_2$ so that it matches $\varphi_1$ on $Q$, and thus $\varphi_1\cup \varphi_2$ is a 4-coloring of $H$.
Since $H$ is rainbow- or diagonal-forbidding, it follows that $H_1$ also is rainbow- or diagonal-forbidding.
By the minimality of $H$, there exists an internal vertex $v\in V(H_1)$ of degree four such that all incident faces are bounded by triangles.
Since all triangles in $H$ bound faces, these triangles also bound faces in $H$, and thus $v$ has degree four in $H$ as well.
This is a contradiction, since $H$ is a counterexample.

Therefore, the graph $H$ does not contain any separating clique.  A similar argument shows that
every internal vertex of $H$ has degree at least four.
Since $H$ does not contain separating cliques, it is 2-connected and every face of $H$ is bounded by an induced cycle.
Let $H'$ be the plane graph obtained from $H$ by, for each internal face $f$ of length $|f|\ge 4$, adding a copy of $F_{|f|}$
and identifying the cycle bounding its outer face with the boundary cycle of $f$.  Since the cycles bounding the internal faces
of $H$ are induced, all triangles created by this modification bound faces, and thus every triangle in $H'$ bounds a face.
Since $H$ is rainbow- or diagonal-forbidding, $H'$ is rainbow- or diagonal-forbidding as well.  Moreover, all internal faces of $H'$ have length three.

Assuming that Conjecture~\ref{conj-main-strong} holds,
$R(|V(H')|)$ or $D(|V(H')|)$ applies and consequently $H'$ contains an internal vertex $v$ of degree at most four.  Since all internal vertices of the graphs
$F_k$ have degree at least five, $v$ is also an internal vertex of $H$.   All internal vertices of $H$ have degree at least four,
and thus $v$ has degree exactly four and all faces of $H$ incident with $v$ are triangles
(otherwise a copy of a graph $F_{|f|}$ would be glued into a face $f$ incident with $v$ in the construction of $H'$,
increasing the degree of $v$ to at least five).  Hence, the conclusion of the lemma holds for $v$.
\end{proof}

Finally, let us point out a useful fact about non-extendable precolorings in a plane graph $G$ with the outer face bounded by a 4-cycle $K=x_1x_2x_3x_4$.
If $K$ has a chord, say $x_1x_3\in E(G)$, then $G$ is both $x_1$-diagonal-forbidding and bichromatic-forbidding.  However, we claim that at most one type
of colorings can be forbidden when $K$ is an induced cycle.

To see this, let us define $\approx_r$, $\approx_1$, $\approx_2$, and $\approx_b$
to be the types of rainbow, $x_1$-diagonal, $x_2$-diagonal, and bichromatic $4$-colorings of $K$, let $Q$ be an
auxiliary graph formed by the 4-cycle with vertices $\approx_r$, $\approx_1$, $\approx_b$, and $\approx_2$ in order,
and let us note the following observation based on Kempe chains.

\begin{lemma}\label{obs-adjacent}
Let $G$ be a plane graph with the outer face bounded by a 4-cycle $K=x_1x_2x_3x_4$, let $\varphi$ be a 4-coloring of $G$,
and let $\approx$ be the type of the restriction of $\varphi$ to $K$.  There exists a $4$-coloring $\varphi'$ of $G$
such that the type of the restriction of $\varphi'$ to $K$ is adjacent to $\approx$ in $Q$.
\end{lemma}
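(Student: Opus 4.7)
The plan is to prove the lemma by a case analysis on the type $\approx$ of $\varphi|_K$, using Kempe-chain swaps and the fact that $K$ bounds the outer face (so any path in $G$ from a vertex of $K$ to the opposite vertex lies in the closed disk bounded by $K$ and separates the other two vertices of $K$). The key planarity fact I will use repeatedly is: if $P$ is a path from $x_1$ to $x_3$ drawn in the closed disk bounded by $K$, then any path from $x_2$ to $x_4$ drawn in the same closed disk must share a vertex with $P$. Consequently, two Kempe chains of disjoint color sets cannot simultaneously contain both pairs of opposite corners of $K$.

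For the rainbow case, write $\varphi(x_i) = i$. Consider the $\{1,3\}$-Kempe chain $C$ of $x_1$. If $x_3 \notin C$, swap colors $1$ and $3$ on $C$ to obtain a coloring with $\varphi'(x_1) = \varphi'(x_3) = 3$, which has type $\approx_1$. Otherwise $C$ contains a $\{1,3\}$-path from $x_1$ to $x_3$; the $\{2,4\}$-Kempe chain of $x_2$ is vertex-disjoint from $C$ (disjoint color sets), so by the planarity fact it does not contain $x_4$; swapping on it yields type $\approx_2$. For the bichromatic case with $\varphi(x_1)=\varphi(x_3)=a$, $\varphi(x_2)=\varphi(x_4)=b$, and $\{c,d\}$ the remaining colors, try the $\{a,c\}$-chain of $x_1$: if it misses $x_3$, swapping gives $\varphi'(x_1)=c \neq a=\varphi'(x_3)$, i.e.\ type $\approx_2$; otherwise the same planarity fact forces the $\{b,d\}$-chain of $x_2$ to miss $x_4$, and swapping yields type $\approx_1$.

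For the $\approx_1$ case write $\varphi(x_1)=\varphi(x_3)=a$, $\varphi(x_2)=b$, $\varphi(x_4)=c$ with $a,b,c$ distinct, and let $d$ be the fourth color. Try the $\{a,d\}$-chain of $x_1$: if it misses $x_3$, swapping gives the coloring $(d,b,a,c)$ on $K$, which is rainbow. Otherwise the planarity fact applied to this $\{a,d\}$-path forces the $\{b,c\}$-chain of $x_2$ to miss $x_4$, and swapping that chain sends $x_2$ to color $c$, yielding the bichromatic coloring $(a,c,a,c)$. The $\approx_2$ case is handled symmetrically by interchanging the roles of the pairs $(x_1,x_3)$ and $(x_2,x_4)$.

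The main subtlety—really the only one—is ensuring the planarity argument is applied correctly: I must verify that each Kempe swap changes the coloring on $K$ in exactly the intended way (no accidental coincidences among colors on other $x_i$), and that the new type lies on the correct edge of $Q$ (adjacent rather than equal or antipodal). In all four cases the colors used in the two Kempe chains under consideration are disjoint from each other and (in the diagonal cases) include the fourth color $d$, which guarantees that after a single swap, the resulting 4-tuple on $K$ indeed realizes a type adjacent to $\approx$ in $Q$.
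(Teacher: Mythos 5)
Your proof is correct and follows essentially the same Kempe-chain approach as the paper: for each type you pick two Kempe swaps on disjoint color pairs, one targeting the $(x_1,x_3)$ diagonal and one targeting $(x_2,x_4)$, and use the Jordan-curve separation forced by $K$ bounding the outer face to show that at least one swap must succeed, yielding a type on a neighboring vertex of $Q$. The paper only illustrates the $\approx_1$ case and asserts the rest by inspection, whereas you write out all four cases; the content and mechanism are the same.
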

\begin{proof}
This follows by the inspection of Kempe chains.  Let us illustrate the argument for a coloring $\varphi$ assigning
colors $1,2,1,3$ to vertices of $K$ in order, i.e., for the case that $\approx$ is $\approx_1$. 
Let $A$ be the subgraph of $G$ induced by vertices of colors $1$ and $4$, and let $B$ be the subgraph induced by the vertices
of colors $2$ and $3$.  If $A$ does not contain a path between $x_1$ and $x_3$, then we can exchange colors
$1$ and $4$ on the component of $A$ containing $x_3$, obtaining a 4-coloring of $G$ whose restriction to $K$ has type $\approx_r$.
If $B$ does not contain a path between $x_2$ and $x_4$, then we can exchange the colors $2$ and $3$ on the component of $B$ containing $x_4$, obtaining
a 4-coloring of $G$ whose restriction to $K$ has type $\approx_b$.  And, by planarity,
$G$ cannot at the same time contain a path between $x_1$ and $x_3$ in colors $1$ and $4$, as well as
a path between $x_2$ and $x_4$ in colors $2$ and $3$.
\end{proof}

Next, let us make some observations on the existence of $4$-colorings that follow from the Four Color Theorem.
\begin{observation}\label{obs-chordop}
Let $G$ be a plane graph with the outer face bounded by a 4-cycle $K=x_1x_2x_3x_4$.
Then either $G$ has a 4-coloring matching $\approx_r$, or two 4-colorings matching
$\approx_1$ and $\approx_2$.
\end{observation}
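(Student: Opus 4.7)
The plan is to reduce the observation to the Four Color Theorem applied to a carefully chosen planar enlargement of $G$. I would assume that $G$ has no $4$-coloring of type $\approx_r$ (otherwise the conclusion already holds) and show that in that case both an $\approx_1$-coloring and an $\approx_2$-coloring exist.

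To produce a coloring of type $\approx_2$, I would let $G^+$ be the plane graph obtained from $G$ by adding the edge $x_1x_3$ drawn inside the outer face of $G$, taking $G^+=G$ if the edge is already present. This is possible because $K$ bounds the outer face and so both $x_1$ and $x_3$ lie on its boundary, making the new edge drawable without crossings. By the Four Color Theorem, $G^+$ admits a proper $4$-coloring $\psi$, which is in particular a proper $4$-coloring of $G$. The edge $x_1x_3$ (present in $G^+$) forces $\psi(x_1)\neq \psi(x_3)$, which rules out the types $\approx_1$ and $\approx_b$ for $\psi|_K$; hence the type is $\approx_r$ or $\approx_2$, and by our assumption it must be $\approx_2$.

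A symmetric argument, adding the edge $x_2x_4$ inside the outer face of $G$ and applying the Four Color Theorem to the resulting planar graph, yields a $4$-coloring of $G$ of type $\approx_1$. This completes the proof.

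There is no substantial obstacle; the only subtlety is to remember that $x_1x_3$ and $x_2x_4$ cannot both be drawn inside the outer face simultaneously without a crossing, so the argument must handle the two diagonals in separate applications of the Four Color Theorem. In particular, Lemma~\ref{obs-adjacent} is not needed for this observation, although one could alternatively reach the same conclusion by starting from an arbitrary $4$-coloring of $G$ and chasing Kempe chains around the cycle $Q$.
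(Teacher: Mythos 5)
Your proof is correct and follows essentially the same route as the paper: add each diagonal $x_1x_3$ and $x_2x_4$ separately (planarity is preserved since $K$ bounds a face), apply the Four Color Theorem, and observe that each resulting coloring must be of rainbow or of the corresponding diagonal type. Your contrapositive phrasing (assuming no rainbow coloring exists) is just a reorganization of the paper's case analysis, and your closing remarks are accurate.
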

\begin{proof}
By the Four Color Theorem, the planar graph $G_1$ obtained from $G$ by adding the edge $x_2x_4$
has a 4-coloring $\varphi_1$, and the planar graph obtained from $G$ by adding the $x_1x_3$ has a 4-coloring $\varphi_2$.
Note that either at least one of $\varphi_1$ and $\varphi_2$ matches $\approx_r$, or
$\varphi_1$ matches $\approx_1$ and $\varphi_2$ matches $\approx_2$.
\end{proof}

\begin{observation}\label{obs-identop}
Let $G$ be a plane graph with the outer face bounded by an induced 4-cycle $K=x_1x_2x_3x_4$.
Then either $G$ has a 4-coloring matching $\approx_b$, or two 4-colorings matching
$\approx_1$ and $\approx_2$.
\end{observation}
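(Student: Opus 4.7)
My plan is to mirror the proof of Observation~\ref{obs-chordop}, but to replace the operation ``add a chord'' with its natural dual ``identify two opposite vertices''. The key exploitation of the assumption that $K=x_1x_2x_3x_4$ is an \emph{induced} cycle is precisely that $x_1x_3\notin E(G)$ and $x_2x_4\notin E(G)$, so these identifications are well defined and do not create loops.

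Concretely, first I would form a planar graph $G_1$ from $G$ by identifying $x_1$ and $x_3$ via a curve drawn across the outer face, and analogously form $G_2$ by identifying $x_2$ and $x_4$. The resulting graphs are plane (possibly with parallel edges if $x_1$ and $x_3$, respectively $x_2$ and $x_4$, share common neighbors), but parallel edges are irrelevant for $4$-colorability, so I pass to the underlying simple planar graphs and apply the Four Color Theorem to obtain $4$-colorings $\varphi_1$ of $G_1$ and $\varphi_2$ of $G_2$. Lifting these back to $G$, I obtain a $4$-coloring $\varphi_1$ of $G$ with $\varphi_1(x_1)=\varphi_1(x_3)$ and a $4$-coloring $\varphi_2$ of $G$ with $\varphi_2(x_2)=\varphi_2(x_4)$.

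The restriction of $\varphi_1$ to $K$ therefore has type $\approx_b$ or $\approx_1$, and the restriction of $\varphi_2$ to $K$ has type $\approx_b$ or $\approx_2$. Hence either one of $\varphi_1$, $\varphi_2$ is bichromatic (giving the first alternative in the conclusion), or $\varphi_1$ matches $\approx_1$ and $\varphi_2$ matches $\approx_2$ (giving the second alternative). This exactly parallels the disjunction argument at the end of Observation~\ref{obs-chordop}.

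The only real subtlety, and essentially the only place one could slip up, is justifying that the identification genuinely yields a planar graph to which the Four Color Theorem applies: this requires that $x_1,x_3$ (resp.\ $x_2,x_4$) be nonadjacent, which is exactly the ``induced'' hypothesis on $K$, and that parallel edges arising from common neighbors do not obstruct $4$-coloring, which is immediate after suppressing multiplicities. With that observation in hand the proof is a direct application of $4$CT to two planar quotients, so I do not anticipate any real obstacle.
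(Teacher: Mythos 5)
Your proof is correct and matches the paper's own argument: identify $x_1$ with $x_3$ and $x_2$ with $x_4$ to form two planar quotients (no loops since $K$ is induced), apply the Four Color Theorem to each, and observe that either one of the lifted colorings is bichromatic or they match $\approx_1$ and $\approx_2$ respectively. This is exactly the paper's proof, stated there simply as "similarly to Observation~\ref{obs-chordop}."
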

\begin{proof}
This follows similarly to Observation~\ref{obs-chordop} by considering the planar graphs obtained from $G$ by identifying $x_1$ with $x_3$
and by by identifying $x_2$ with $x_4$.  Note that these graphs do not contain loops, since $K$ is an induced cycle.
\end{proof}

The desired claim is now just a simple combination of these observations.

\begin{corollary}\label{cor-allbutone}
Let $G$ be a plane graph with the outer face bounded by an induced 4-cycle $K=x_1x_2x_3x_4$.
Then $G$ has three 4-colorings whose restrictions to $K$ have different types.
\end{corollary}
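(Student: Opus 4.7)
The plan is to combine the two Observations with Lemma~\ref{obs-adjacent} via a short case analysis, where the Observations first supply two types, and a Kempe chain argument produces a third.

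Let $T$ be the set of types of $4$-colorings of $G$; we want to show $|T|\ge 3$. Observation~\ref{obs-chordop} gives $\approx_r\in T$ or $\{\approx_1,\approx_2\}\subseteq T$, and Observation~\ref{obs-identop} gives $\approx_b\in T$ or $\{\approx_1,\approx_2\}\subseteq T$ (using that $K$ is induced).

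First I would handle the case $\{\approx_1,\approx_2\}\subseteq T$. Taking any $4$-coloring of $G$ of type $\approx_1$ and applying Lemma~\ref{obs-adjacent}, I obtain another $4$-coloring of $G$ whose restriction to $K$ has type adjacent to $\approx_1$ in $Q$, which is either $\approx_r$ or $\approx_b$. Either way, $T$ contains $\approx_1$, $\approx_2$, and one of $\{\approx_r,\approx_b\}$, so $|T|\ge 3$.

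The remaining case is $\{\approx_1,\approx_2\}\not\subseteq T$; then both Observations must yield their first alternative, and so $\{\approx_r,\approx_b\}\subseteq T$. Applying Lemma~\ref{obs-adjacent} to a coloring of type $\approx_r$ produces a coloring of type $\approx_1$ or $\approx_2$, and we again conclude $|T|\ge 3$. The only subtlety is to check that the hypotheses of the Observations apply ($K$ induced is needed only for Observation~\ref{obs-identop}, which is exactly our assumption), so there is no real obstacle here — the proof is essentially bookkeeping over which of the two dichotomies holds and then one invocation of Kempe chains to bridge to the third type.
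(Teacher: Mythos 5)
Your proof is correct and follows the same route as the paper's: combine Observations~\ref{obs-chordop} and \ref{obs-identop} to conclude that either $\{\approx_1,\approx_2\}$ or $\{\approx_r,\approx_b\}$ is realized, and then apply the Kempe-chain Lemma~\ref{obs-adjacent} to one of those two colorings to pick up a third type (the two types guaranteed sit at antipodal vertices of $Q$, so any neighbor of either one is new). Your write-up simply unfolds the paper's "either/or" into an explicit two-case analysis, but the content and the dependencies are identical.
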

\begin{proof}
Let $A$ be the set of vertices $\approx{}\in V(Q)$ such that $G$ has a 4-coloring matching $\approx$.
By Observations~\ref{obs-chordop} and \ref{obs-identop}, either $\approx_1,\approx_2{}\in A$, or $\approx_b,\approx_r{}\in A$.
In either case Observation~\ref{obs-adjacent} implies that $A$ contains at least one more type.
\end{proof}

\section{Generating candidates}\label{sec-enum-cand}

A \emph{canvas} is a plane graph with the outer face bounded by a cycle of length at least four
and all other faces of length three.
A \emph{candidate} is a canvas such that every triangle bounds a face and all internal vertices have
degree at least five.  A candidate is an \emph{$\ell$-candidate} if its outer face has length $\ell$.
By Theorem~\ref{thm-equiv}, in order to verify Conjecture~\ref{conj-main-strong} for graphs with at most $n$ vertices,
it suffices to enumerate the $4$-candidates with at most $n$ vertices and check that none of them is rainbow-forbidding.

In this section, we develop a way to enumerate $\ell$-candidates for any fixed $\ell\ge 4$.
More precisely, we are going to describe a system of reductions which turn each sufficiently large $\ell$-candidate into a smaller
one.  Thus, all $\ell$-candidates can be produced from small $\ell$-candidates by a sequence of the inverse operations to these reductions.

A cycle $K$ in a canvas is \emph{hollow} if no vertex is drawn in the open disk bounded by $K$ (but edges can be drawn in this disk).
A candidate is \emph{internally $k$-connected} if all cycles of length less than $k$ other than the one bounding the outer face
are hollow.  In particular, every candidate is internally $4$-connected.
For a canvas $G$ and a cycle $C$ in $G$, the subgraph $G'$ of $G$ drawn in the closed disk bounded by $C$ is a \emph{subcanvas}
of $G$.  When $G$ is a candidate, we also say that $G'$ is a \emph{subcandidate}, and it is an \emph{$m$-subcandidate} if $C$ has length $m$.
When we look for a reducible configuration, it is convenient to assume that the candidate is internally $5$-connected,
since then its reduction turns out to be guaranteed not to create non-facial triangles.
We can do so by focusing on the smallest $4$-subcandidate $H$ of the considered $\ell$-candidate $G$;
the minimality of $H$ clearly implies that $H$ is internally 5-connected.
In order for the reducible configuration found in $H$ to be valid in $G$, we need
to ensure that the degrees of external vertices of $H$ are not decreased below five when the reduction is performed
in $G$. To facilitate this, we are going to require that in the canvas resulting from the
reduction in $H$, all external vertices have degree at least four.  Thus, we say that a canvas $H$ is \emph{thick}
if its outer face has length at least five or all external vertices of $H$ have degree at least four.  Let us argue that
this condition is automatically satisfied by non-trivial internally $5$-connected candidates.

\begin{observation}\label{obs-thick}
Every internally $5$-connected $\ell$-candidate $H$ with at least one internal vertex is thick.
\end{observation}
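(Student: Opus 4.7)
The plan is a proof by contradiction leveraging the hollowness forced by internal $5$-connectedness. Since thickness is immediate when $\ell\ge 5$, I would assume $\ell=4$, let $K=x_1x_2x_3x_4$ bound the outer face, and suppose for contradiction that some external vertex, say $x_1$, satisfies $\deg x_1\le 3$. Because $x_2,x_4\in N(x_1)$, the only possibilities are $\deg x_1=2$ and $\deg x_1=3$, which I would handle separately.

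If $\deg x_1=2$, then the single internal face incident with $x_1$ is a triangle containing both edges $x_1x_2$ and $x_1x_4$, which forces $x_2x_4\in E(H)$ and makes $x_1x_2x_4$ a facial triangle. The chord $x_2x_4$ then splits the interior of $K$ into this facial triangle and a second region bounded by the triangle $T=x_2x_3x_4$. Every internal vertex of $H$ must lie in the second region, since the facial triangle has no interior vertex; because $T$ has length $3$ and is distinct from $K$, the existence of an internal vertex in its open disk contradicts internal $5$-connectedness.

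If $\deg x_1=3$, let $y$ denote the third neighbor of $x_1$; then $y$ is internal. The two triangular internal faces at $x_1$ are forced to be $x_1x_2y$ and $x_1yx_4$, so $y$ is adjacent to both $x_2$ and $x_4$. I would then consider the $4$-cycle $C=x_2yx_4x_3$. Since $y$ is internal we have $\deg y\ge 5$, so $y$ has at least one neighbor $z$ outside $\{x_1,x_2,x_3,x_4\}$, and such a $z$ is an internal vertex distinct from $y$. The $x_1$-side of $C$ consists only of the two triangular faces at $x_1$ and admits no vertex in its interior, so $z$ must lie in the open disk bounded by $C$ on the opposite ($x_3$-)side. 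Since $C$ has length $4$ and differs from $K$, this contradicts internal $5$-connectedness.

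The main technical point, and the step I expect to require the most care, is the topological identification of which of the two open disks bounded by $T$ (respectively $C$) does not contain the outer face of $H$. In both cases the argument relies on the fact that the $x_1$-side of the separating cycle is exactly a union of the previously identified facial triangles at $x_1$ and thus admits no additional internal vertex, forcing the required internal vertex to the opposite side of the cycle, where hollowness fails.
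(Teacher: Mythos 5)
Your proof is correct and follows essentially the same route as the paper: both arguments use the short cycle formed by the link of the low-degree external vertex together with the opposite path of $K$ and derive a contradiction from hollowness (internal $5$-connectedness) combined with the degree-at-least-five condition on internal vertices, the only difference being minor bookkeeping (your explicit split into $\deg x_1=2$ and $\deg x_1=3$, and producing the extra vertex $z$ from $\deg y\ge 5$ rather than bounding $\deg y\le 4$ directly). One small point to add: in the degree-three case you assert that the third neighbor $y$ is internal, which requires ruling out $y=x_3$; this is immediate because if $y=x_3$ the two triangular faces at $x_1$ would cover the whole interior of $K$, contradicting the assumption that $H$ has an internal vertex.
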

\begin{proof}
By the definition of thickness, this is trivial if $\ell\ge 5$.  Hence, suppose that $\ell=4$, and let
$K=v_1v_2v_3v_4$ be the cycle bounding the outer face of $H$.  Suppose for a contradiction that $H$ is not thick,
and thus say $\deg v_1\le 3$.  Let $C$ be the cycle in $H$ formed by the path on the neighbors of $v_1$ and the path $v_2v_3v_4$;
we have $|V(C)|=\deg v_1+1\le 4$.  Observe that every vertex of $H$ except for $v_1$ is drawn in the closed disk bounded by $C$.
However, the candidate $H$ is internally $5$-connected, and thus the cycle $C$ is hollow.
Since $H$ has at least one internal vertex, we conclude that $\deg v_1=3$ and $H$ has exactly one internal vertex $v$.
However, then $\deg v\le |V(K)|=4$, contradicting the assumption that $H$ is a candidate.
\end{proof}

We are also going to need the following simple discharging argument.

\begin{lemma}\label{lemma-thicknobas}
Every thick $4$-candidate contains an internal vertex of degree five with no external neighbor.
\end{lemma}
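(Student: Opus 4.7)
The plan is a short counting argument combining Euler's formula with a careful count of internal vertices adjacent to the outer $4$-cycle.

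Let $H$ be a thick $4$-candidate with outer cycle $K=v_1v_2v_3v_4$, write $X=V(K)$, and let $n=|V(H)|$. First I would check that $K$ is induced: any chord, say $v_1v_3$, would make $v_1v_2v_3$ and $v_1v_3v_4$ triangles that must both bound faces (since $H$ is a candidate), which forces $V(H)=X$ and leaves every $v_i$ with degree at most $3$, contradicting thickness. Since all internal faces of $H$ are triangles and the outer face has length $4$, Euler's formula gives $|E(H)|=3n-7$ and $\sum_v\deg(v)=6n-14$. Setting $S=\sum_{x\in X}\deg(x)\ge 16$ (by thickness) and letting $V_5$ denote the set of internal vertices of degree exactly $5$, the condition that internal vertices have degree at least $5$ yields $\sum_{v\in V(H)\setminus X}\deg(v)\ge 6(n-4)-|V_5|$, and comparing with the degree sum gives
\[
|V_5|\ \ge\ S-10.
\]

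For contradiction, suppose every vertex of $V_5$ has a neighbor in $X$, so that $V_5\subseteq N_X$, where $N_X$ is the set of internal vertices with at least one external neighbor. The next step is to bound $|N_X|$ by analyzing the ``corners'' of $H$. Since $K$ is induced, the four edges of $K$ account for all edges of $H$ inside $X$, so $|E(X,N_X)|=S-8$. For each edge $v_iv_{i+1}$ of $K$, the unique internal face on its non-outer side is a triangle $v_iv_{i+1}w_i$ whose third vertex $w_i$ is internal and adjacent to both $v_i$ and $v_{i+1}$. I would first rule out the case where a single vertex $w$ occupies all four corner positions: then the four face-bounding triangles $wv_iv_{i+1}$ exhaust the interior of $K$, forcing $\deg(w)=4$ and contradicting the candidate condition. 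Hence the number $k$ of distinct corner vertices satisfies $k\ge 2$; writing $c_v$ for the number of corner positions held by a corner $v$ (so $\sum_v c_v=4$ and every $c_v\le 3$), an easy check of contiguous and non-contiguous configurations gives $|N(v)\cap X|\ge c_v+1$ for each corner. Summing, the corners account for at least $\sum_v(c_v+1)=4+k$ edges between $X$ and $N_X$, and since each non-corner vertex of $N_X$ contributes at least one further such edge, we obtain
\[
|N_X|\ \le\ k+\bigl(S-8-(4+k)\bigr)\ =\ S-12.
\]

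Combining the two inequalities gives $S-10\le|V_5|\le|N_X|\le S-12$, which is absurd, completing the argument. The main obstacle is the corner bookkeeping: one must rule out the single-corner case using the interaction between ``every triangle bounds a face'' and the internal-degree-at-least-$5$ condition, and verify $|N(v)\cap X|\ge c_v+1$ across the handful of contiguous/non-contiguous configurations of corner positions. Everything else is routine application of Euler's formula.
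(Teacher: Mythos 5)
Your proof is correct and takes a genuinely different route from the paper's. The paper proves this lemma by discharging: assign charge $\deg(v)-6$ to every vertex (total $-14$ by Euler's formula), let each external vertex send charge to its internal neighbors according to a simple local rule ($1/2$ to the two neighbors at the ends of the fan, $1$ to the rest), and verify that after redistribution the closed neighborhood of $K$ carries total charge at least $-12>-14$, forcing an internal vertex outside it to retain its negative initial charge, i.e., to have degree five. Your argument reaches the same conclusion by a direct count: the lower bound $|V_5|\ge S-10$ comes from the degree sum, and the upper bound $|N_X|\le S-12$ comes from counting the $S-8$ edges between $X$ and the interior together with your corner analysis. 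Both proofs hinge on Euler's formula and the local picture at the outer $4$-cycle, but yours replaces the discharging bookkeeping with two transparent inequalities, which is arguably cleaner. A minor sharpening: the case $c_v=3$ in your corner enumeration cannot actually occur, because if a single vertex $w$ sits at three consecutive corner positions, the triangle it forms with the remaining edge of $K$ has all three edges present in $H$ and so must bound a face (every triangle in a candidate is facial), already giving $c_w=4$; your bound $c_v\le 3$ is harmless but $c_v\le 2$ holds once the single-corner case is excluded.
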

\begin{proof}
Let $G$ be a thick $4$-candidate and let $K$ be the 4-cycle bounding the outer face of $G$.
Assign charge $c_0(v)=d-6$ to each vertex $v\in V(G)$ of degree $d$.  Since all faces of $G$ except for the outer one have length three
and $|V(K)|=4$, observe that $G$ has exactly $3|V(G)|-7$ edges.  Consequently,
$$\sum_{v\in V(G)} c_0(v)=2|E(G)| - 6|V(G)|=-14.$$
For each external vertex $v$ of $G$, we redistribute the charge as follows.
Let $u_1$, \ldots, $u_d$ be the neighbors of $v$ in order according to the drawing of $G$, where $u_1vu_d$ is a subpath of $K$;
since $G$ is thick, we have $d\ge 4$.  The vertex $v$ sends $1/2$ to $u_2$ and $u_{d-1}$ and $1$ to each of $u_3$, \ldots, $u_{d-2}$.
Let us remark that since all internal faces of $G$ have length three, the vertices $u_2$ and $u_{d-1}$ receive additional $1/2$ in charge from this
rule being applied at $u_1$ and $u_d$, respectively.

For each vertex $x\in V(G)$, let $c(x)$ be the charge of $x$ after the redistribution.
Since no charge is created or lost, we have $$\sum_{x\in V(G)} c(x)=\sum_{x\in V(G)} c_0(x)=-14.$$
Moreover, for each vertex $v\in V(K)$, we have
$$c(v)\ge c_0(v)-2\times 1/2 - (\deg v-4)\times 1=-3,$$
and for each internal vertex $x$ with a neighbor in $K$, we have
$$c(x)\ge c_0(x)+1=\deg x-5\ge 0.$$
Thus, the sum of charges of vertices in the closed neighborhood of $K$ is at least $-3|V(K)|=-12>-14$.
It follows that there exists a vertex $u\in V(G)\setminus V(K)$ with no external neighbor and with $c(u)=c_0(u)<0$, i.e., $\deg u<6$.
Since $G$ is a candidate, we have $\deg u=5$.
\end{proof}

\begin{figure}
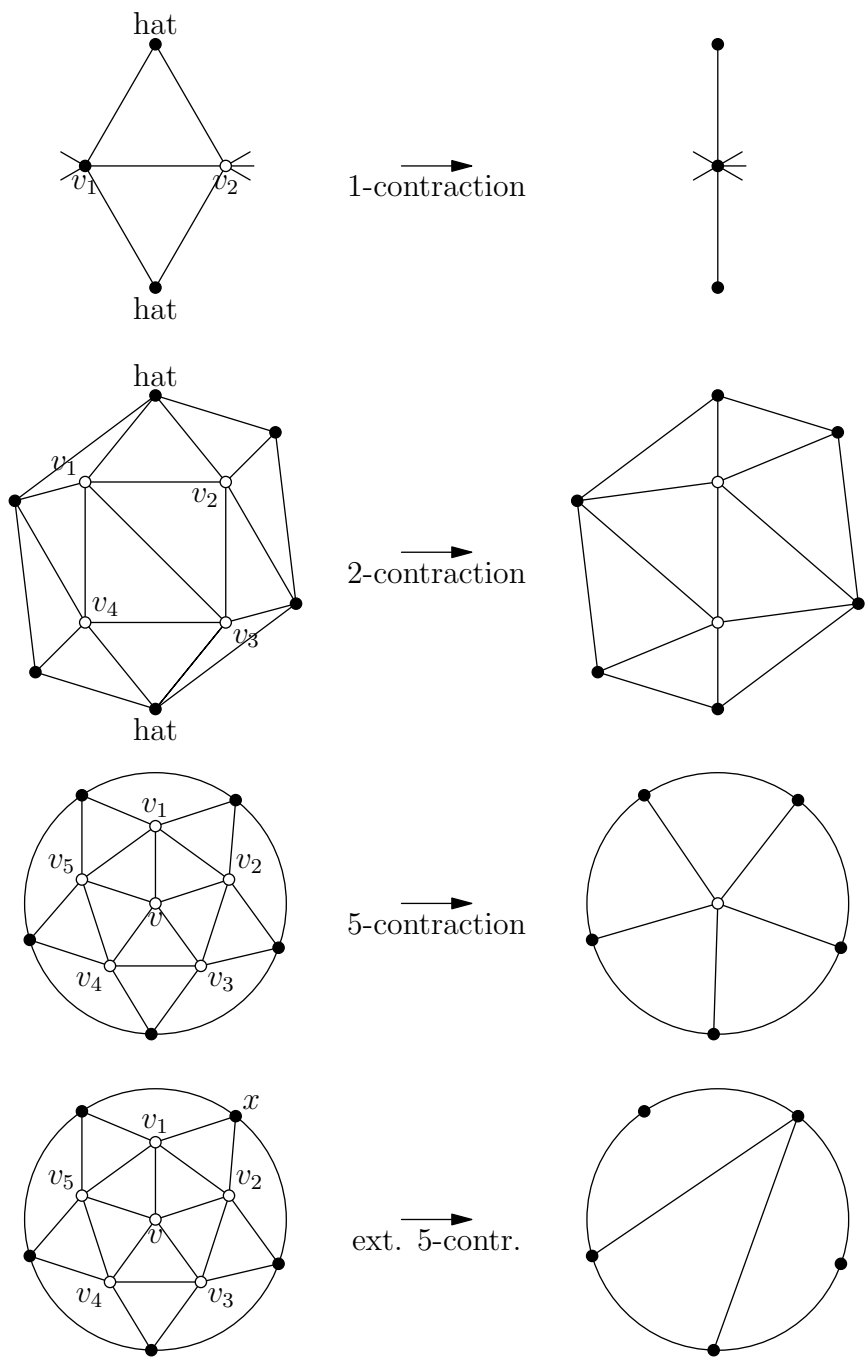

\begin{center}\asyinclude[width=0.9\textwidth]{fig-contrs.asy}\end{center}
\caption{$\star$-contractions (1)}\label{fig-contrs}
\end{figure}

\begin{figure}
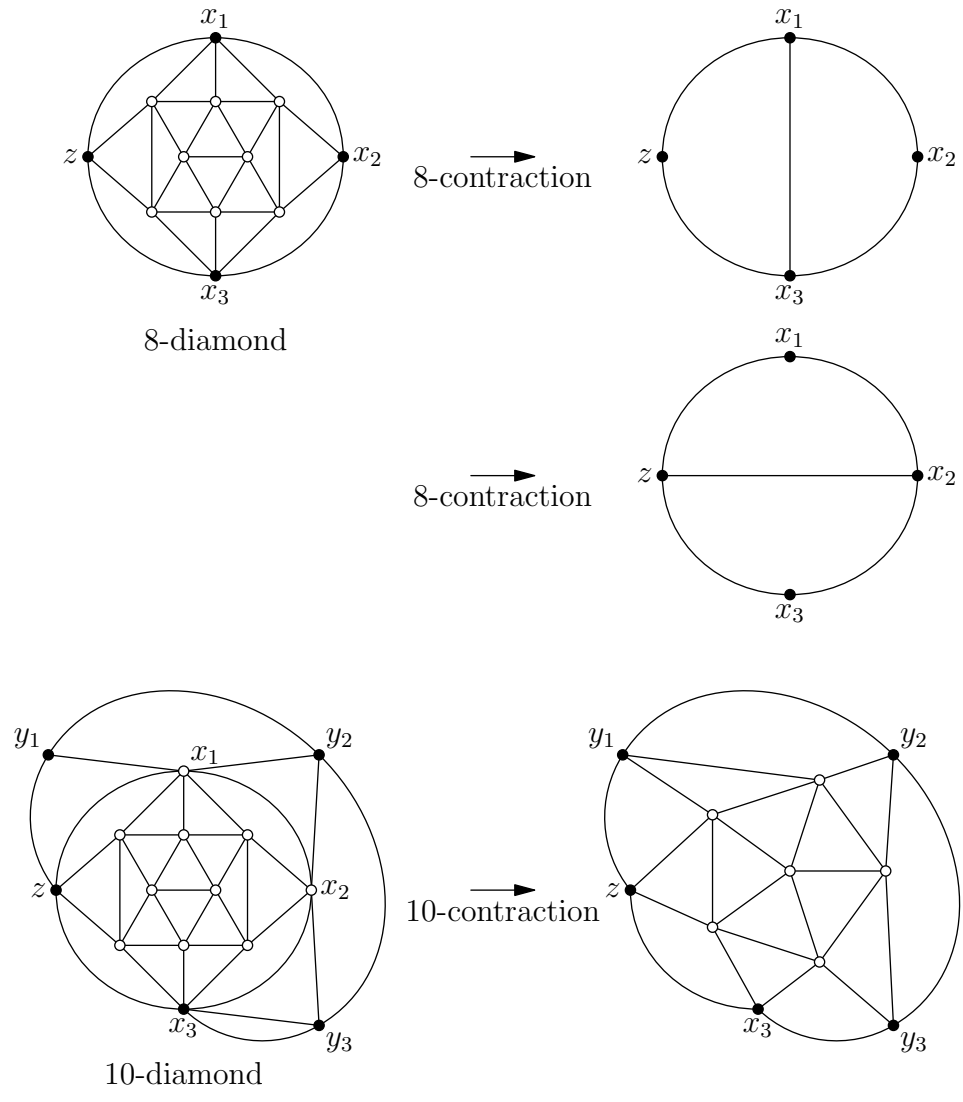

\begin{center}\asyinclude[width=\textwidth]{fig-contrs-rest.asy}\end{center}
\caption{$\star$-contractions (2)}\label{fig-contrs-rest}
\end{figure}

Let us now describe the reductions needed in the generation of candidates, see Figures~\ref{fig-contrs} and \ref{fig-contrs-rest} for an illustration.
After contraction of an edge, we always suppress the resulting 2-faces.
\begin{itemize}
\item The \emph{$1$-contraction} is a contraction of an edge $v_1v_2$, where $v_2$ is an internal vertex.
\item The \emph{$2$-contraction} is contraction of two edges $v_1v_2$ and $v_3v_4$, where $v_1v_2v_3v_4$ is a hollow $4$-cycle
(whose interior consists of two $3$-faces) and $v_1$, \ldots, $v_4$ are internal vertices of degree five.
\item The \emph{$5$-contraction} is a contraction of the five edges $v_1v$, \ldots, $v_5v$ incident to an internal vertex $v$ of degree five,
where $v_1$, \ldots, $v_5$ are also internal vertices of degree five.
\item The \emph{extended $5$-contraction} applies in the same situation.  Let $x\neq v$ be the common neighbor of $v_1$ and $v_2$.
The operation consists of the $5$-contraction of the edges incident with $v$ followed
by a $1$-contraction of the edge between the resulting vertex and $x$.
\item Let $K=x_1x_2x_3z$ be a 4-cycle such that the subgraph $H$ drawn in the closed disk bounded by $K$ is as depicted in Figure~\ref{fig-contrs-rest} on the top left;
in particular, all vertices of $V(H)\setminus V(K)$ are internal and have degree five.  We say that $H$ is an \emph{$8$-diamond}.
The \emph{$8$-contraction} is the operation of deleting the vertices of $V(H)\setminus V(K)$ and adding either the edge $x_1x_3$ or the edge $x_2z$.
\item Let $K=zy_1y_2y_3x_3$ be a 5-cycle such that the subgraph $H$ drawn in the closed disk bounded by $K$ consists of an $8$-diamond inside the 4-cycle $zx_1x_2x_3$
and the edges of the path $y_1x_1y_2x_2y_3x_3$, as depicted in Figure~\ref{fig-contrs-rest} on the bottom left.
in particular, all vertices of $V(H)\setminus (V(K)\cup \{x_1,x_2\})$ are internal and have degree five,
and the vertices $x_1$ and $x_2$ are internal and have degrees $7$ and $6$, respectively.
We say that $H$ is a \emph{$10$-diamond}.  The \emph{$10$-contraction} is the operation of deleting the vertices of $V(H)\setminus V(K)$
and replacing them by the $5$-wheel of six internal vertices of degree five.
\end{itemize}
Let us remark that this set of reductions is inspired by~\cite{brinkmann2005construction}.
By a \emph{$\star$-contraction}, we mean any of these operations.  We say that a $\star$-contraction is \emph{safe} if the result of the operation is a candidate,
i.e., it is internally $4$-connected and all internal vertices have degree at least five.
Since we are only working with simple graphs, this also implicitly contains the assumption that no parallel edges or loops
(except for those eliminated by suppression of $2$-faces) arise in the reduction.
A $\star$-contraction is \emph{strongly safe} if it is safe and the resulting candidate is thick.

For a $1$-contraction, the \emph{hats} are the two vertices distinct from $v_1$ and $v_2$
and incident with the faces whose boundary contains the edge $v_1v_2$.
For a $2$-contraction, the hats are the two vertices distinct from $v_1$, \ldots, $v_4$
and incident with the faces whose boundary contains the edge $v_1v_2$ or the edge $v_3v_4$.
The safety of $1$- and $2$-contractions can be described in terms of the degrees of the hats
as follows.

\begin{observation}\label{obs-12contr}
Let $G$ be a internally $5$-connected $\ell$-candidate. Consider a $1$- or $2$-contraction in $G$.
If neither of the hats of this $\star$-contraction is an internal vertex of degree five, then the $\star$-contraction is safe.
If additionally $\ell>4$ or neither of the hats is an external vertex of degree four, then the $\star$-contraction is strongly safe.
\end{observation}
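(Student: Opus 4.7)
The plan is to verify the three conditions defining a candidate---simplicity, internal vertices of degree at least five, and every triangle facial---for the graph $G'$ produced by the $\star$-contraction, and then to check thickness of $G'$ under the additional hypothesis. I handle $1$- and $2$-contractions in parallel. For simplicity, the only way a pair of parallel edges could persist after the contraction is for $v_1,v_2$ to have a common neighbor besides the two hats; but every triangle in the candidate $G$ bounds a face, so $N_G(v_1)\cap N_G(v_2)=\{h_1,h_2\}$, and the two resulting $2$-faces are suppressed. The $2$-contraction is analogous, additionally using the chord of the hollow $4$-cycle $v_1v_2v_3v_4$: this chord together with the two sides of the $4$-cycle contributes three parallel edges between the merged vertices, which collapse to one. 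For the minimum-degree condition, the merged vertex of a $1$-contraction has degree $\deg_G v_1+\deg_G v_2-4\ge 5$ (and is internal iff $v_1$ is); for a $2$-contraction, a direct count using $\deg_G v_i=5$ for $i\in\{1,\ldots,4\}$ gives that each of the two merged vertices has degree exactly five. All other degrees are preserved except at the hats, each of which loses exactly one neighbor, so the hypothesis that no hat is an internal vertex of degree five is precisely what is needed.

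The main obstacle is the third condition, that every triangle in $G'$ bounds a face. A new triangle must pass through a merged vertex, and if both of its other vertices $u,w$ lie in $N_G(v_1)$ or both in $N_G(v_2)$, then it is inherited from a facial triangle of $G$. In the remaining case $u\in N_G(v_1)\setminus N_G(v_2)$, $w\in N_G(v_2)\setminus N_G(v_1)$, and $uw\in E(G)$, the cycle $C=v_1uwv_2$ is a $4$-cycle of $G$; since $v_2$ is internal, $C$ is not the outer face, so internal $5$-connectedness forces $C$ to be hollow. However, neither diagonal $v_1w$ nor $uv_2$ is an edge of $G$, so the interior of $C$ contains no chord, and it would therefore consist of a single $4$-face, contradicting the fact that every internal face of a canvas is a triangle. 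Hence this case does not arise. The $2$-contraction case is handled analogously at each merged vertex, and the triangles through the new edge between the two merged vertices reduce to the two hats of the remaining sides of the $4$-cycle $v_1v_2v_3v_4$, each of which corresponds to an inherited facial triangle of $G$.

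Finally, for strong safety, thickness of $G'$ is automatic when $\ell>4$. When $\ell=4$, Observation~\ref{obs-thick} yields that $G$ is already thick, so $v_1$ has degree at least four if external; the merged vertex then has degree at least $4+5-4=5$, and the only external vertices whose degrees can decrease are the hats, which by the additional hypothesis cannot drop below four.
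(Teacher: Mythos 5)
Your proof is correct and follows essentially the same route as the paper's, but you fill in the details that the paper dismisses with "since $G$ is internally $5$-connected, observe that this $\star$-contraction cannot create parallel edges or a non-facial triangle." Specifically, your argument that a would-be new non-facial triangle through the merged vertex forces a chordless $4$-cycle $v_1uwv_2$, which cannot be hollow in a canvas, is the content the paper leaves implicit; the degree bookkeeping (merged vertices, hats losing one, hats distinct) and the thickness argument via Observation~\ref{obs-thick} are the same as in the paper.
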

\begin{proof}
Let the canvas $G'$ be created by performing the $1$- or $2$-contraction.
Since $G$ is internally $5$-connected, observe that this $\star$-contraction cannot create parallel edges or a non-facial triangle.
In the case of $1$-contraction, the vertex $v$ of $G'$ created by contracting the edge $v_1v_2$ has degree
$\deg(v_1)+\deg(v_2)-4>\deg v_1$, and it is internal if and only if $v_1$ is internal.  In the case of a $2$-contraction,
both vertices of $G'$ created by the contractions are internal and have degree five.  The degrees of vertices
other than hats are unchanged, and the degree of each hat decreases by one (note that the two hats are distinct,
since $G$ does not contain non-facial triangles).

Hence, if neither of the hats is an internal vertex of degree five, then every internal vertex of $G'$ has degree
at least five.  Consequently, $G'$ is a candidate, and thus the $\star$-contraction is safe.
If $\ell\ge 5$, then $G'$ is also automatically strongly safe.
For $\ell=4$, note that every external vertex has degree at least four in $G$ by Observation~\ref{obs-thick}.
If additionally neither of the hats is an external vertex of degree four, this implies that the candidate $G'$ is also thick,
and thus the $\star$-contraction is strongly safe.
\end{proof}

Let us now argue that with a few exceptions, a strongly safe $\star$-contraction can be performed in every internally $5$-connected candidate.
Let us start with a quite restrictive special case.

\begin{lemma}\label{lemma-int5}
Let $G$ be an internally $5$-connected $\ell$-candidate, not isomorphic to the $8$-diamond.
If $G$ contains an internal vertex $v$ of degree five such that all neighbors of $v$
are internal and have degree five, then $G$ admits a strongly safe $1$-contraction, $5$-contraction, or extended $5$-contraction.
\end{lemma}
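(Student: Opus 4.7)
The plan is to exploit the rigidity of the closed $2$-neighborhood $N_2[v]$ of $v$. By the hypothesis together with internal $5$-connectedness and the ``every triangle bounds a face'' property, this neighborhood is forced to be the standard $5$-gear: the $5$-wheel on $\{v,v_1,\ldots,v_5\}$ surrounded by an outer $5$-cycle $x_1x_2x_3x_4x_5$, where each $x_i$ is the common neighbor of $v_i,v_{i+1}$ other than $v$, and where $x_{i-1}x_i\in E(G)$. All $11$ vertices are distinct. A useful first remark is that Observation~\ref{obs-12contr} rules out any safe $1$-contraction of an edge with both endpoints in $\{v,v_1,\ldots,v_5\}$, because every such edge has a wheel vertex (internal, degree~$5$) as one of its hats. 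So the reduction we look for must be a $5$-contraction, an extended $5$-contraction, or a $1$-contraction on an edge that reaches outside the wheel.

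Next, I would partition the outer ring by letting $S=\{i : x_i \text{ is internal with } \deg x_i=5\}$. \emph{If $S=\emptyset$}, I use the $5$-contraction at $v$: the merged vertex has degree~$5$, each $x_i$ loses exactly one neighbor (so its degree still meets the safety and thickness thresholds by the assumption on $S$), and internal $5$-connectedness precludes parallel edges and non-facial triangles while the outer face is untouched. \emph{If $S=\{x_k\}$ is a single bad hat}, I use the extended $5$-contraction absorbing $x_k$: the merged vertex again has degree~$5$ (namely $\deg x_k$), and a finite degree-bookkeeping shows that the two ring-neighbors $x_{k\pm 1}$, which each lose two neighbors into the merged set, and the two further hats $x_{k\pm 2}$, which each lose one, still meet the required degree thresholds because they lie outside $S$.

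The bulk of the proof lies in the remaining case, where $|S|\ge 2$ or a single bad hat is flanked by a ring-neighbor of degree exactly $6$. Here I would propagate the analysis by shifting the ``center'' from $v$ to some bad hat $x_k\in S$: $x_k$ is itself an internal degree-$5$ vertex, and $v_k,v_{k+1}$ are two of its internal degree-$5$ neighbors, so if the remaining three neighbors of $x_k$ are also internal of degree~$5$, then the hypothesis reapplies at $x_k$ and we recurse; otherwise the gear of $x_k$ falls into the cases above, or a $1$-contraction of an outward edge $x_k y_k$ (whose hats are the ring-neighbors $x_{k\pm 1}$) becomes available once those neighbors lie outside $S$. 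The hard part will be verifying that this recursive shifting can only fail to produce a strongly safe $\star$-contraction inside the $8$-diamond, the exceptional small configuration excluded by the hypothesis. This termination argument requires a careful planarity and degree count; it is the main obstacle, and it is exactly what forces the $8$-diamond to appear as an exception in the statement.
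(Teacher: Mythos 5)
Your structural setup (the gear $v,v_1,\ldots,v_5,x_1,\ldots,x_5$ with $x_1\ldots x_5$ an induced cycle) matches the paper's, but the case analysis built on the set $S$ of \emph{internal} degree-five hats has concrete gaps. First, $S$ does not record the actual obstruction to \emph{strong} safety: if $\ell=4$ and some $x_i$ is an \emph{external} vertex of degree four, the plain $5$-contraction drops its degree to three and the result is not thick, so your case $S=\emptyset$ does not deliver a strongly safe contraction. That external-degree-four situation is exactly where the paper's proof deploys the extended $5$-contraction (absorbing the external hat $x_2$ of the outer cycle $x_1x_2x_3z$), and it is also exactly where the $8$-diamond exception materializes ($x_5z\in E(G)$ forces $x_4z\in E(G)$, i.e.\ $G$ is the $8$-diamond); your proposal never treats this case and instead attributes the exception to a recursion-termination phenomenon. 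Second, your single-bad-hat case is wrong as stated: the extended $5$-contraction absorbing an internal degree-five hat $x_k$ lowers the degrees of $x_{k\pm1}$ by two, and ``outside $S$'' only guarantees they are internal of degree at least six (degree four afterwards, so not even safe) or external, possibly of degree four (ruining thickness). The paper handles an internal degree-five hat $x_2$ by a completely different move: the $1$-contraction of the edge $x_2x_2'$ to its fifth neighbor, whose hats are $x_1$ and $x_3$; Observation~\ref{obs-12contr} makes this strongly safe unless $x_1$ or $x_3$ is itself an internal degree-five vertex, and in that residual situation internal $5$-connectivity forces the $4$-cycle $x_2'x_5x_4x_3$ to bound the outer face, landing back in the external-degree-four case.

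Third, and most importantly, the bulk of your argument --- the ``remaining case'' --- is a plan rather than a proof: you propose re-centering at a bad hat and recursing, and you explicitly leave open the termination and planarity analysis, which is where essentially all the work would lie. The sketch is also doubtful on its own terms: re-applying the hypothesis of Lemma~\ref{lemma-int5} at $x_k$ requires all five of its neighbors (including its fifth neighbor, about which nothing is known) to be internal of degree five, there is no measure of progress for the recursion, and no argument that global failure pins down the $8$-diamond. The paper needs no recursion at all; the whole lemma is settled by the two local moves described above (the $1$-contraction $x_2x_2'$ and the extended $5$-contraction at an external degree-four hat) together with the degree bookkeeping that identifies the $8$-diamond as the unique escape. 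As written, the proposal does not establish the lemma.
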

\begin{proof}
Let $G'$ be the canvas obtained from $G$ obtained by the $5$-contraction of the edges incident with $v$,
and let $w$ denote the vertex created by the contraction.
Let $v_1$, \ldots, $v_5$ be the neighbors of $v$ in order.  For $i=1,\ldots,5$, let $x_i$ be the common neighbor of $v_i$ and $v_{i+1}$
(where $v_6=v_1$) distinct from $v$.  Note that $x_1$, \ldots, $x_5$ are the neighbors of $w$ in $G'$ in order.
For distinct $i,j\in\{1,\ldots,5\}$, we have $x_i\neq x_j$, as otherwise $G$ would contain a non-facial triangle.
In particular, $G'$ does not have any parallel edges.  Moreover, $x_i$ is adjacent to $x_j$ if and only if $|i-j|\equiv \pm 1\pmod 5$,
since $G$ is internally $5$-connected.  Therefore, $x_1\ldots x_5$ is an induced cycle in $G$ as well as in $G'$,
and consequently $G'$ does not contain any non-facial triangles.

\begin{itemize}
\item Let us first consider the case that $\ell>4$ or none of $x_1$, \ldots, $x_5$ is an external vertex of degree four in $G$.
Note that $\deg_{G'} x_i=\deg_{G} x_i-1$ for $i\in \{1,\ldots,5\}$, and thus this implies that the canvas $G'$
is thick.  Hence, if the $5$-contraction of the edges around $v$ is not strongly safe, then it is not safe, and one of the
vertices $x_1$, \ldots, $x_5$ is internal and has degree at most four in $G'$.

We can by symmetry assume that $x_2$ is an internal vertex of degree five in $G$, so that $x_2$ has degree four in $G'$.
Let $x'_2$ be the neighbor of $x_2$ distinct from $x_1$, $v_2$, $v_3$, and $x_3$; since all internal faces of $G'$ are triangles, $x'_2$ is
adjacent to $x_1$ and $x_3$.  If neither $x_1$ nor $x_3$ is an internal vertex of degree five in $G$, then by Observation~\ref{obs-12contr},
the $1$-contraction of the edge $x_2x'_2$ in $G$ is strongly safe.

If say $x_1$ were an internal vertex of degree five, then since all internal faces of $G$ are triangles, we would have $x'_2x_5\in E(G)$.
Since $G$ is internally 5-connected and the $4$-cycle $x'_2x_5x_4x_3$ is not hollow,
this cycle would bound the outer face of $G$.
But then $\ell=4$ and $x_4$ would be an external vertex of degree four, contradicting the assumptions of this case.

We conclude that in this case, $G$ admits a strongly safe $1$-contraction or $5$-contraction.

\item Hence, we can suppose that $\ell=4$ and say $x_2$ is an external vertex of degree four.
Then the outer face is bounded by the cycle $x_1x_2x_3z$ for some vertex $z$.  Note that $z\not\in \{x_4,x_5\}$, since otherwise
the cycle $zv_5v_4x_3$ or $zv_5v_1x_1$ would contradict the internal 5-connectivity of $G$.
Thus, $x_4$ and $x_5$ are internal vertices.

Let $G''$ be obtained from $G'$ by contracting the edge $wx_2$;
then the canvas $G''$ is obtained from $G$ by an extended $5$-contraction.  Note that $G''$ does not contain non-facial triangles,
since $x_1x_4,x_3x_5, x_1x_3\not\in E(G)$.  If $x_5z\in E(G)$, then since the $4$-cycle $x_5zx_3x_4$ is hollow
and $\deg_G(x_4)\ge 5$, we have also $x_4z\in E(G)$, and thus $G$ would be an $8$-diamond.
Hence, we can assume $x_4z\not\in E(G)$, and symmetrically that $x_5z\not\in E(G)$.
Hence, $zx_1x_5$ and $zx_3x_4$ are not $3$-faces of $G$, and since all internal faces of $G$ are triangles,
it follows that $\deg_G x_i\ge 6$ and $\deg_{G''} x_i\ge 4$ for $i\in \{1,3\}$.  Since $x_2x_4,x_2x_5\in E(G'')$,
we have $\deg_{G''} x_2=4$. Moreover, $\deg_{G''} z=\deg_G z\ge 4$ by Observation~\ref{obs-thick}.
If the extended $5$-contraction of $G$ to $G''$ is not strongly safe, it follows that $G''$ contains an internal vertex of degree at most four.
Clearly, this vertex must be $x_4$ or $x_5$.

Suppose that say $x_5$ has degree at most four in $G''$, and thus it has degree five in $G$.  Let $x'_5$ be the neighbor
of $x_5$ distinct from $x_1$, $v_1$, $v_5$, and $x_4$.  Since all internal faces of $G$ are triangles, we have $x'_5x_1,x'_5x_4\in E(G)$.
If $x'_5x_3\in E(G)$, then $x'_5x_1x_2x_3$ would be a non-hollow $4$-cycle in $G$.  It follows that $x'_5x_3\not\in E(G)$ and $x'_5x_4x_3$ is not a $3$-face
of $G$. Since all internal faces of $G$ are triangles, we conclude that $\deg_G x_4\ge 6$.  Observation~\ref{obs-12contr} then implies that the $1$-contraction
of the edge $x_5x'_5$ in $G$ is strongly safe.

We conclude that in this case, $G$ admits a strongly safe $1$-contraction, or $5$-contraction, or extended $5$-contraction.
\end{itemize}
\end{proof}

Next, we consider a much less restrictive special case.

\begin{lemma}\label{lemma-intc4}
Let $G$ be an internally $5$-connected $\ell$-candidate, not isomorphic to the $8$-diamond.
If $G$ contains a 4-cycle $C=v_1v_2v_3v_4$ of internal vertices of degree five,
then $G$ admits a strongly safe $\star$-contraction.
\end{lemma}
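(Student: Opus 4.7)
My plan is to try the natural 2-contractions on the 4-cycle $C$ and, when they fail to be strongly safe, to fall back on Lemma~\ref{lemma-int5} at a vertex of $C$ or on a 1-contraction at a nearby vertex. Since $G$ is internally 5-connected and the vertices of $C$ are internal, the cycle $C$ must be hollow; because every internal face is a triangle and the interior of $C$ contains no vertex, $C$ has a chord, which by symmetry I take to be $v_1v_3$. Labeling the third vertices of the outside faces of the edges $v_1v_2, v_2v_3, v_3v_4, v_4v_1$ by $x, a, y, z$ respectively, and observing that $v_2$ and $v_4$ each have exactly one additional outside neighbor $b$ and $c$ (since they have degree five), the face structure around $v_1, v_2, v_3, v_4$ forces a ``belt'' of six outside neighbors with adjacencies $xz, xb, ba, ay, yc, cz$.

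The 4-cycle $C$ admits two 2-contractions: contracting $\{v_1v_2, v_3v_4\}$ has hats $\{x, y\}$, and contracting $\{v_2v_3, v_4v_1\}$ has hats $\{a, z\}$. Call a vertex \emph{bad} if it is internal of degree five or, when $\ell = 4$, external of degree four. By Observation~\ref{obs-12contr}, if some 2-contraction has no bad hat it is strongly safe, and we are done. Otherwise, $\{x, y\}$ and $\{a, z\}$ each contain a bad vertex, which leads to a short case analysis. If two \emph{opposite} hats $\{x, z\}$ (or $\{y, a\}$) are both internal of degree five, then all five neighbors of $v_1$ (or $v_3$) are internal of degree five, and Lemma~\ref{lemma-int5} produces a strongly safe $\star$-contraction since $G$ is not the 8-diamond. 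If two \emph{non-opposite} hats $\{x, a\}$ (or $\{y, z\}$) are both internal of degree five, then four of the five neighbors of $v_2$ (or $v_4$) are internal of degree five, so Lemma~\ref{lemma-int5} applies at $v_2$ provided the fifth neighbor $b$ is also internal of degree five. In the remaining subcase, $b$ is not bad, and the face $v_1xz$ at $v_1$ forces the cyclic neighborhood of $x$ to take the form $v_1, v_2, b, n, z$ for some vertex $n$; the 1-contraction of the ``opposite'' edge $xn$ then has hats $b$ and $z$, neither of which is bad, so Observation~\ref{obs-12contr} gives strong safety.

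The main obstacle I expect is the $\ell = 4$ bookkeeping, where ``bad'' also includes external vertices of degree four, and where Lemma~\ref{lemma-int5} (which demands all five neighbors to be internal of degree five) cannot be invoked when a bad hat is external. In that setting one has to combine Observation~\ref{obs-thick} with the internal 5-connectivity of $G$ to severely restrict which belt vertices can simultaneously be external of degree four, and then to exhibit either a strongly safe 1-contraction on a belt edge or the $xn$-contraction above after verifying that the resulting canvas remains thick. Once each leftover configuration is dispatched by such a local argument, a strongly safe $\star$-contraction is produced in every case, completing the proof.
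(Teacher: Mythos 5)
Your setup and your main case track the paper's proof closely: the paper also notes the chord (say $v_1v_3$), introduces the six ``belt'' vertices (its $x_1,v_2',x_2,x_3,v_4',x_4$ are your $x,b,a,y,c,z$), tries both $2$-contractions of $C$, falls back on Lemma~\ref{lemma-int5} at $v_1$ or $v_3$ when two hats around one of these vertices are internal of degree five, and otherwise produces a strongly safe $1$-contraction (the paper contracts $v_4v_4'$ with hats $y,z$, whereas you contract the edge $xn$ at the hat $x$ with hats $b,z$ --- both work in that situation, so this difference is cosmetic). Two remarks on that part: you assert rather than prove that the belt consists of six distinct vertices forming the stated adjacencies; the paper establishes this by showing $x_1v_2'x_2x_3v_4'x_4$ is an induced cycle, using internal $5$-connectivity and the facial-triangle property, and this fact is also what it later needs when analysing the $\ell=4$ case.

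The genuine gap is the $\ell=4$ case in which one of the hats is an external vertex of degree four. Your case analysis only handles bad hats that are \emph{internal of degree five} (otherwise neither the ``opposite'' nor the ``non-opposite'' branch applies, and claims such as ``$b$ is not bad'' and ``$z$ is not bad'' are unjustified), and for this remaining case you only state that ``one has to combine Observation~\ref{obs-thick} with the internal $5$-connectivity \ldots and then exhibit'' a suitable contraction --- that is, you describe the task rather than carry it out. This is roughly half of the paper's proof: there, if say $x_1$ is external of degree four, the outer face must be the $4$-cycle $x_4x_1v_2'y$ with $y$ off the belt (this uses that the belt cycle is induced), which forces $x_2$, $x_3$, $v_4'$ to be internal and $\deg x_4\ge 5$; then a further split is needed: if $\deg x_3>5$ the $1$-contraction of $v_4v_4'$ is strongly safe, if $\deg x_2>5$ the $2$-contraction of $v_1v_4$ and $v_2v_3$ is strongly safe, and if $\deg x_2=\deg x_3=5$ one applies Lemma~\ref{lemma-int5} at $v_3$. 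None of this is routine bookkeeping that follows automatically from your main argument, so as written the proof is incomplete until this case is dispatched explicitly.
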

\begin{proof}
Since the candidate $G$ is internally $5$-connected, the open disk bounded by the 4-cycle $C$ contains exactly two faces;
by symmetry, we can assume that $v_1v_3$ is an edge drawn inside $C$.
For $i=1,\ldots,4$, let $x_i$ be the common neighbor of $v_i$ and $v_{i+1}$ (where $v_5=v_1$) distinct from $v_1$, \ldots, $v_4$.
Since $\deg v_1=\deg v_3=5$ and all internal faces of $G$ are triangles, we have $x_1x_4, x_2x_3\in E(G)$.
For $i\in \{2,4\}$, let $v'_i$ be the neighbor of $v_i$ different from $v_1$, \ldots, $v_4$, $x_1$, \ldots, $x_4$,
and similarly note that $v_i$ is adjacent to $x_{i-1}$ and $x_i$.  Moreover, $C'=x_1v'_2x_2x_3v'_4x_4$ is an induced cycle,
as otherwise $G$ would not be internally $5$-connected ($v'_2$ is not adjacent to $v'_4$, as otherwise by symmetry
between $v'_2x_2x_3v'_4$ and $v'_2x_1x_4v'_4$, we can assume that the former $4$-cycle does not bound the outer face,
and thus it must be hollow and $x_2$ or $x_3$ would be an internal vertex of degree four).

Let us first consider the case that either $\ell>4$ or none of $x_1$, \ldots, $x_4$ is an external vertex of degree four.
By Observation~\ref{obs-12contr}, if the $2$-contraction of $v_1v_2$ and $v_3v_4$ is not strongly safe, then $x_1$ or $x_3$
is an internal vertex of degree five; by symmetry, suppose this is the case for $x_1$.  If $x_4$ is an internal vertex of degree
five, then $G$ admits a strongly safe $\star$-contraction by Lemma~\ref{lemma-int5} applied to $v_1$ and its neighbors;
hence suppose this is not the case.  By Observation~\ref{obs-12contr}, if the $2$-contraction of $v_2v_3$ and $v_1v_4$ is not strongly safe,
then $x_2$ is an internal vertex of degree five.  If $x_3$ is an internal vertex of degree five, then $G$ admits a strongly safe $\star$-contraction by Lemma~\ref{lemma-int5} applied to $v_3$ and its neighbors.
Otherwise, Observation~\ref{obs-12contr} implies that the $1$-contraction of the edge $v_4v'_4$ is strongly safe.

Next, suppose that $\ell=4$ and at least one of the vertices $x_1$, \ldots, $x_4$, say $x_1$, is an external vertex of degree four.
Hence, the outer face of $G$ is bounded by the cycle $x_4x_1v'_2y$ for another vertex $y$. Since the cycle $C'$ is induced,
we have $y\not\in V(C')$, and thus the vertices $x_2$, $x_3$, and $v'_4$ are internal and $\deg x_4\ge 5$.
By Observation~\ref{obs-12contr}, if $\deg(x_3)>5$, then the $1$-contraction of $v_4v'_4$ is strongly safe,
and if $\deg(x_2)>5$, then the $2$-contraction of $v_1v_4$ and $v_2v_3$ is strongly safe.  Finally, if $\deg(x_2)=\deg(x_4)=5$,
then the claim follows from Lemma~\ref{lemma-int5} applied to $v_3$ and its neighbors.
\end{proof}

We can now consider the general case of internally 5-connected candidates.

\begin{lemma}\label{lemma-intde5}
Let $G$ be an internally $5$-connected $\ell$-candidate, not isomorphic to the $8$-diamond.
If $G$ contains an internal vertex, then $G$ admits a strongly safe $\star$-contraction.
\end{lemma}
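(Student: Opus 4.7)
The plan is to locate a ``seed'' internal vertex $v$ of degree $5$ all of whose neighbours are internal, and then either apply Lemma~\ref{lemma-int5} or Lemma~\ref{lemma-intc4} to a configuration at or near $v$, or, when neither of those fires, to recognise that the surrounding structure is forced to be a diamond so that the $8$- or $10$-contraction can be performed. Observation~\ref{obs-thick} gives that $G$ is thick; for $\ell = 4$ the seed $v$ is supplied by Lemma~\ref{lemma-thicknobas}, and for $\ell \ge 5$ an analogous Euler/discharging argument (summing $\deg u - 6$ over $V(G)$, which totals $-2\ell-6$, and checking that the outer face cannot absorb all the negative charge) produces an internal degree-$5$ vertex with no external neighbour.

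Write $v_1, \ldots, v_5$ for the neighbours of $v$ in cyclic order and let $x_i$ be the common neighbour of $v_i$ and $v_{i+1}$ distinct from $v$. If every $v_i$ has degree $5$ in $G$, Lemma~\ref{lemma-int5} finishes immediately. Otherwise we hunt for a $4$-cycle of internal degree-$5$ vertices: whenever consecutive neighbours $v_i, v_{i+1}$ both have degree $5$ and their common other neighbour $x_i$ is internal and of degree $5$, the $4$-cycle $v\,v_i\,x_i\,v_{i+1}$, with the chord $v_i v_{i+1}$ drawn inside, satisfies the hypothesis of Lemma~\ref{lemma-intc4}. Assuming neither Lemma~\ref{lemma-int5} nor Lemma~\ref{lemma-intc4} applies anywhere around $v$, we accumulate strong restrictions: at least one $v_i$ has degree $\ge 6$, and for every consecutive pair of degree-$5$ neighbours of $v$ the corresponding $x_i$ is either external (possible only when $\ell \ge 5$, or on the outer $4$-cycle when $\ell=4$) or of degree $\ge 6$.

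These restrictions, combined with internal $5$-connectedness (which forces $x_1 x_2 x_3 x_4 x_5$ to be an induced cycle and the $x_i$ to be pairwise distinct) and with the triangulated-internal-face condition, confine the subcanvas around $v$ to a small list of possibilities. A case analysis on the multiset of degrees of $v_1, \ldots, v_5$ and on the types of the $x_i$ identifies each residual configuration as either an $8$-diamond—excluded by hypothesis when it coincides with $G$, and otherwise producing a strongly safe $1$- or $2$-contraction at its boundary via the arguments already used in Lemmas~\ref{lemma-int5} and~\ref{lemma-intc4}—or a $10$-diamond as defined before the statement, on which the $10$-contraction is performed.

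The main obstacle will be this last step: verifying that the diamond contraction is \emph{strongly safe}, i.e., that no parallel edges or non-facial triangles are introduced, that every internal vertex of the result has degree at least five, and that the resulting canvas is thick. Internal $5$-connectedness of $G$ together with the triangulated-internal-face condition do most of the work (they force the diamond boundary to be an induced cycle whose external neighbourhood is well-controlled), but the bookkeeping of external degrees and of short cycles near the boundary—particularly for the $10$-contraction, which inserts a new $5$-wheel into the region vacated by the $10$-diamond and therefore changes the degrees of the three boundary vertices $z, x_3, y_i$ adjacent to the wheel rim—requires careful attention.
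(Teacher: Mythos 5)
There is a genuine gap, in fact two. First, your seed step for $\ell\ge 5$ is false: no discharging argument can produce an internal vertex of degree five with no external neighbour (nor even an internal vertex of degree five at all), because there are internally $5$-connected $\ell$-candidates, $\ell\ge 5$, in which every internal vertex has degree at least six --- e.g.\ large patches of the triangular lattice. Lemma~\ref{lemma-thicknobas} is specific to thick $4$-candidates and does not generalize this way. The case ``every internal vertex has degree at least six'' must be treated separately; the paper does so by noting that this forces $\ell>4$ (otherwise Observation~\ref{obs-thick} and Lemma~\ref{lemma-thicknobas} would yield an internal degree-five vertex), and then the $1$-contraction of any edge incident with an internal vertex is strongly safe by Observation~\ref{obs-12contr}, since no hat can be an internal vertex of degree five and strong safety is automatic when $\ell\ge 5$. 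Moreover, for $\ell\ge 5$ the argument never needs the neighbours of the seed $v$ to be internal; that requirement is imposed (and available via Lemma~\ref{lemma-thicknobas}) only when $\ell=4$.

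Second, your residual case is both unproved and wrong in its claimed conclusion. When neither Lemma~\ref{lemma-int5} nor your version of the Lemma~\ref{lemma-intc4} trigger applies, the local structure around $v$ is \emph{not} forced to be an $8$- or $10$-diamond: take, say, $v$ with neighbour degrees $5,6,5,6,6$ and all $x_i$ of degree at least six. What actually closes this case (and is what the paper does) is elementary: if three consecutive neighbours of $v$ are internal vertices of degree five, then $v$ together with them is a $4$-cycle of internal degree-five vertices and Lemma~\ref{lemma-intc4} applies (your trigger, which asks for $x_i$ to be an internal degree-five vertex, misses exactly these configurations and dumps them into the residual case); otherwise one can pick $v_{i-1},v_{i+1}$ neither of which is an internal degree-five vertex, and the $1$-contraction of $vv_i$ is strongly safe by Observation~\ref{obs-12contr} (for $\ell=4$ the hats are internal by the choice of $v$, so thickness is preserved). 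Consequently the $8$- and $10$-contractions play no role in this lemma at all --- they are needed only later, in Lemma~\ref{lemma-reduce}, when internal $5$-connectivity fails --- and the ``main obstacle'' you flag, verifying strong safety of the $10$-contraction, is a step the correct argument never has to take (the paper only ever establishes plain safety for the $8$- and $10$-contractions).
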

\begin{proof}
Suppose first that $G$ has an internal vertex $v$ of degree five, and let $v_1$, \ldots, $v_5$ be the neighbors of $v$ in order.
If $\ell=4$, then by Observation~\ref{obs-thick} and Lemma~\ref{lemma-thicknobas}, we can additionally choose $v$ so that $v_1$, \ldots, $v_5$ are internal vertices.
If three consecutive neighbors of $v$ are internal vertices of degree five, then the claim follows by Lemma~\ref{lemma-intc4}.
Otherwise, we can by symmetry assume that neither $v_1$ nor $v_3$ is an internal vertex of degree five, and the $1$-contraction of the edge $vv_2$
is strongly safe by Observation~\ref{obs-12contr}.

Hence, we can assume that all internal vertices of $G$ have degree at least six.  By Observation~\ref{obs-thick} and Lemma~\ref{lemma-thicknobas},
it follows that $\ell>4$.  Hence, by Observation~\ref{obs-12contr}, the $1$-contraction of any edge incident with
an internal vertex of $G$ is strongly safe.
\end{proof}

Finally, let us get rid of the assumption of internal 5-connectivity.

\begin{lemma}\label{lemma-reduce}
Every $\ell$-candidate $G$ with at least one internal vertex admits a safe $\star$-contraction.
\end{lemma}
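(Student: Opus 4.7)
First, I would split on whether $G$ is internally $5$-connected. If it is, then either $G$ is isomorphic to the $8$-diamond — in which case the $8$-contraction of $G$ itself yields the outer $4$-cycle with one added chord, which has no internal vertices and is vacuously a candidate — or $G$ is not the $8$-diamond, and Lemma~\ref{lemma-intde5} directly supplies a strongly safe (hence safe) $\star$-contraction.

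If $G$ is not internally $5$-connected, then $G$ contains a non-hollow $4$-cycle $C$ different from its outer face. I would pick such a $C$ so that the closed disk it bounds contains as few vertices as possible, and let $H$ be the resulting $4$-subcandidate. The minimality of $C$ forces every $4$-cycle of $H$ other than $C$ to be hollow, so $H$ is internally $5$-connected, and the non-hollowness of $C$ forces $H$ to have at least one internal vertex. I would then apply Lemma~\ref{lemma-intde5} to $H$ (or, if $H$ is itself an $8$-diamond, take the $8$-contraction of $H$ inside $G$, choosing by a planarity argument the diagonal that does not complete a non-facial triangle with any vertex outside $C$) to obtain a strongly safe $\star$-contraction $\tau$ of $H$, and then carry out $\tau$ within $G$ to obtain $G'$.

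The bulk of the work is to verify that $\tau$ remains safe when performed in $G$. The $\star$-contractions returned by Lemma~\ref{lemma-intde5} are $1$-, $2$-, $5$-, or extended $5$-contractions, none of which add any edge, so no new triangle can appear, and the triangles-bound-faces condition transfers from $H'$ and the untouched exterior of $C$ to $G'$. For the degree condition, vertices strictly inside $C$ have their full $G$-neighborhood already in $H$, so their $G'$-degree coincides with their $H'$-degree, which is at least five because $H'$ is a candidate. For a vertex $u$ on $C$ — which is internal in $G$ — the thickness of $H'$ supplies $\deg_{H'}u\ge 4$, and combining this with any neighbor of $u$ lying outside $C$ in $G$ yields $\deg_{G'}u\ge 5$ in almost every configuration.

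The hardest step, and the main obstacle I anticipate, is to rule out the residual degenerate case in which $u$ is a hat of $\tau$ lying on $C$ with $\deg_H u=5$ and no neighbors of $u$ outside of $C$, so that naively $\deg_{G'}u=4$. Here one must trace through the explicit choices inside Lemma~\ref{lemma-intde5} and the sublemmas it invokes — which already arrange the contracted edge so that both hats are interior vertices of $H$ whenever possible — and combine them with the minimality of $C$, either finding an alternative $\star$-contraction in $G$ centered at $u$ itself (which is an internal degree-$5$ vertex of $G$ with all neighbors inside $H$) or deriving a contradiction with the choice of $C$.
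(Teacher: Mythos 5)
Your overall skeleton matches the paper's: pass to a minimal $4$-subcandidate $H$, which is internally $5$-connected, apply Lemma~\ref{lemma-intde5} to get a strongly safe $\star$-contraction, and transfer it to $G$. But there is a genuine gap exactly where you compress the argument into a parenthetical: the case where $H$ is an $8$-diamond, which Lemma~\ref{lemma-intde5} explicitly excludes, and which is where most of the paper's proof lives. Choosing ``the diagonal that does not complete a non-facial triangle'' is indeed possible (at most one of the two $8$-contractions creates one), but that diagonal can still fail the \emph{degree} condition: a vertex of $C$ that is internal in $G$ may have only two neighbors outside $H$, and after the $8$-contraction adding the other diagonal it has degree four. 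The paper handles this by a further case analysis: first, if some internal vertex of $C$ has exactly one neighbor $u'$ outside $H$, the $1$-contraction of that edge is safe; otherwise every internal vertex of $C$ has at least two outside neighbors, and then either one $8$-contraction has a non-facial triangle (in which case the common neighbor forces enough outside neighbors to make the \emph{other} $8$-contraction degree-safe), or neither does but both fail on degrees, which forces two adjacent vertices of $C$ to have exactly two outside neighbors each — producing a $10$-diamond, for which the $10$-contraction (replacing it by a $5$-wheel) is the safe operation. Your proposal never invokes the $10$-contraction at all, even though it is in the list of $\star$-contractions precisely for this situation, so the argument cannot be closed as written.

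Two smaller points. The ``residual degenerate case'' you flag as the main obstacle is in fact a non-issue and is resolved in one line: since all internal faces of $G$ are triangles and every triangle of $G$ is facial, any vertex of $C$ that is internal in $G$ must be incident with an edge not in $H$ (otherwise the opposite triangle of $C$ would be non-facial); combined with thickness of the contracted $H$ (this is exactly what ``strongly safe'' buys), every such vertex has degree at least $4+1=5$ after performing the contraction in $G$. You should prove this rather than leave it as ``trace through the choices or derive a contradiction.'' Also, your justification that no new non-facial triangle appears (``none of which add any edge'') is wrong as a reason: edge contractions can create triangles by collapsing $4$-cycles; what saves you is that such a collapse producing a chord of $C$ would already yield a non-facial triangle inside the contracted $H$, contradicting safety there.
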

\begin{proof}
We can assume $G$ is not an $8$-diamond, as this 4-candidate admits a safe $8$-contraction.
Hence, by Lemma~\ref{lemma-intde5}, we can assume that $G$ is not internally $5$-connected.
Let $G'$ be a minimal $4$-subcandidate of $G$ with at least one internal vertex, and let
$C=x_1x_2x_3z$ be the cycle bounding the outer face of $G'$.  Note that $G'$ is internally $5$-connected.
Moreover, note that each vertex of $C$ which is not external in $G$ is incident with an edge not belonging to $E(G')$:
If say $z$ were internal and had no neighbors outside of $G'$, then since every internal face of $G$ is a triangle, the triangle
$x_1zx_3$ would bound a face drawn outside of $C$, and $x_1x_2x_3$ would be a non-facial triangle of $G$.
Hence, if the interior of $C$ is replaced by a thick 4-candidate, then every internal vertex belonging to $C$ has degree at least five
in the resulting graph.  We conclude that if $G'$ admits a strongly safe $\star$-contraction, then this $\star$-contraction is also safe in $G$.
Hence, by Lemma~\ref{lemma-intde5}, we can assume $G'$ is an $8$-diamond.

For $i\in \{1,3\}$, since $\deg_{G'} x_i=5$, the analysis from the previous paragraph shows that if $x_i$ is an internal vertex of $G$, then $\deg_G x_i>5$.
Suppose that $u\in V(C)$ is an internal vertex of $G$ with exactly one neighbor $u'$ outside of $G'$, and let $x$ and $y$ be the common neighbors of $u$ and $u'$ in $C$.
Observe that every 4-cycle containing the edge $uu'$ is hollow, since every path of length three from $u$ to $u'$ in $G$ passes through $x$ or $y$
and every triangle in $G$ bounds a face.  Thus, the $1$-contraction of the edge $uu'$ in $G$ is safe.
Hence, we can assume that in $G$, each internal vertex of $C$ has at least two neighbors outside of $G'$.

Let $G_1$ and $G_2$ be the $8$-contractions of $G$ obtained by replacing the interior of $C$ by the edges $x_1x_3$ and $x_2z$,
respectively.   Note that at most one of the graphs $G_1$ and $G_2$ contains a non-facial triangle, as otherwise all vertices of $C$
would have a common neighbor and $G$ would contain a non-facial triangle.

Suppose first that $G_1$ contains a non-facial triangle, and thus the vertices $x_1$ and $x_3$ have a common neighbor $u$ distinct from $x_2$ and $z$.
Note that $ux_2,uz\not\in E(G)$, as otherwise $G$ would either contain a non-facial triangle or $x_2$ or $z$ would be
an internal vertex with only one neighbor outside of $G'$.  Since all internal faces of $G$ are triangles, it follows that for $i\in \{1,3\}$, if the vertex $x_i$
is internal in $G$, then it has at least two neighbors outside of $G'$ in addition to $u$, and thus it has degree at least five in $G_2$.
Moreover, if $z$ or $x_2$ is an internal vertex of $G$, then recall that it has at least two neighbors outside of $G'$, and thus it also has degree at least five in $G_2$.
It follows that the $8$-contraction of $G$ to $G_2$ is safe.  

Similarly, if $G_2$ contains a non-facial triangle, then the $8$-contraction to $G_1$ is safe.
Hence, we can assume neither $G_1$ nor $G_2$ contains a non-facial triangle.  If the $8$-contraction neither to $G_1$ nor to $G_2$
is safe, then we can by symmetry assume that $x_1$ and $x_2$ are internal vertices of $G$ with exactly two neighbors outside of $G'$.
Let $y_1$ and $y_2$ be the neighbors of $x_1$ outside of $G'$, and let $y_2$ and $y_3$ be the neighbors of $x_2$ outside of $G'$.
Let $C'$ be the 5-cycle $y_1y_2y_3x_3z$, and let $G''$ be the subgraph of $G$ drawn in the closed disk bounded by $C'$.  Then $G''$ is
a $10$-diamond.  Since neither $x_3$ nor $z$ is an internal vertex of $G$ with
all neighbors in $G''$, the corresponding $10$-contraction is safe.
\end{proof}

Thus, we obtain the desired result on generation of candidates.

\begin{corollary}\label{cor-generate-cands}
For every $\ell$-candidate $G$, there exists an $\ell$-candidate $G'$ with no internal vertices
such that $G$ can be obtained from $G'$ by a sequence of inverses to $\star$-contractions, where all intermediate
canvases in this sequence are $\ell$-candidates.
\end{corollary}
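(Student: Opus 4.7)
The plan is a straightforward induction via Lemma~\ref{lemma-reduce}. The naive measure is $|V(G)|$: the $1$-, $2$-, $5$-, extended $5$-, and $8$-contractions reduce it by $1$, $2$, $5$, $6$, and the (positive) number of interior vertices of the $8$-diamond, respectively. The $10$-contraction, however, removes the six interior vertices of a $10$-diamond and installs a $5$-wheel with six new internal vertices in their stead, leaving $|V(G)|$ unchanged. To accommodate this, I would induct on the lexicographic pair $(|V(G)|, N(G))$, where $N(G)$ denotes the number of internal vertices of $G$ of degree at least six. In a $10$-contraction the two interior vertices $x_1$ and $x_2$, of degrees $7$ and $6$ respectively, are deleted, whereas the six new $5$-wheel vertices all have degree exactly five and the degrees of the other vertices of $G$ are unchanged; hence $N(G)$ strictly decreases. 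For every other $\star$-contraction the first coordinate strictly decreases. So the lexicographic measure strictly decreases under every safe $\star$-contraction.

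With this measure, the induction is routine. If $G$ has no internal vertex, take $G' = G$ and use the empty sequence. Otherwise, Lemma~\ref{lemma-reduce} supplies a safe $\star$-contraction of $G$ to a canvas $H$. By the definition of safety, $H$ is a candidate, and since every $\star$-contraction preserves the cycle bounding the outer face, $H$ is in fact an $\ell$-candidate. The inductive hypothesis applied to $H$ yields an $\ell$-candidate $H'$ with no internal vertex together with a sequence of inverses of $\star$-contractions from $H'$ to $H$ through $\ell$-candidates; setting $G' = H'$ and prepending the inverse of the reduction $G \to H$ produces the required sequence from $G'$ to $G$.

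The only real content of the argument is the verification that the lexicographic measure strictly decreases in the $10$-contraction case, which is the elementary degree observation above; I anticipate no other obstacle.
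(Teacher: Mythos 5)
Your overall skeleton — induct on a decreasing measure, using Lemma~\ref{lemma-reduce} to supply a safe $\star$-contraction whenever an internal vertex exists, and observe that safety plus preservation of the outer cycle keeps every intermediate canvas an $\ell$-candidate — is exactly the paper's (implicit) argument. However, the one place where you claim the "real content" lies rests on a miscount of the configurations. The $8$-diamond is not an $8$-vertex graph with four interior vertices: a quick Euler count shows a $4$-candidate with outer $4$-cycle, triangulated interior and $k$ internal vertices all of degree five would need $2k-1$ edges among the internal vertices, which is impossible for $k=4$; the $8$-diamond is the $12$-vertex configuration (the icosahedron minus an edge) with \emph{eight} internal vertices, as one also sees from the proof of Lemma~\ref{lemma-int5}, where the graph identified as an $8$-diamond has $12$ vertices. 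Consequently, in a $10$-diamond the set $V(H)\setminus V(K)$ consists of \emph{ten} vertices ($x_1$, $x_2$, and the eight interior vertices of the $8$-diamond — whence the names), and the $10$-contraction replaces ten vertices by six, so $|V(G)|$ drops by four. Plain induction on $|V(G)|$ therefore suffices; no lexicographic refinement is needed.

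Moreover, your patch is not sound as written, so it does not repair the (nonexistent) problem it was designed for. The assertion that "the degrees of the other vertices of $G$ are unchanged" under a $10$-contraction is false: the five vertices of the bounding $5$-cycle $zy_1y_2y_3x_3$ change their degrees, since the attachment of the new $5$-wheel differs from the attachment of the deleted interior (in particular $y_1$ and $y_3$ gain a neighbor, while $z$ and $x_3$ lose neighbors — which is precisely why the safety argument at the end of the proof of Lemma~\ref{lemma-reduce} needs $z$ and $x_3$ to have neighbors outside the $10$-diamond). Hence $N(G)$ need not strictly decrease: if $y_1$ or $y_3$ is an internal vertex of degree five, it becomes a vertex of degree six, offsetting the loss of $x_1$ and $x_2$. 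Under your (mistaken) premise that the vertex count is preserved, the lexicographic measure could therefore stall, and the induction would have a genuine gap; it is rescued only because the premise is false and the first coordinate in fact decreases under every safe $\star$-contraction, including the $10$-contraction.
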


Corollary~\ref{cor-generate-cands} can be used to generate all non-isomorphic $\ell$-candidates with at most $n$ vertices
in time $C_\ell(n)\cdot\text{poly}(n)$, were $C_\ell(n)$ is the number of such $\ell$-candidates.
The basic idea is as follows:
We start from the $\ell$-candidates with no internal vertices, i.e., from 2-connected outerplanar
graphs with $\ell$ vertices and all internal faces of length three (such graphs can be easily enumerated,
since the graphs obtained from their duals by deleting the vertex corresponding to the outer face are
exactly the subcubic trees with $\ell-2$ vertices).  We then process these $\ell$-candidates as well as all
$\ell$-candidates generated later.  For the currently processed $\ell$-candidate $G$, we perform all possible inverses to $\star$-contractions
to obtain larger $\ell$-candidates.  For each such $\ell$-candidate $G'$ with at most $n$ vertices, we first check whether we have not
seen it before (e.g., by storing canonical isomorphism-invariant codes of all encountered $\ell$-candidates in a hash table;
since we deal with plane graphs, such a canonical code can be computed in linear time), and if not, we schedule it for processing.

This procedure is guaranteed to generate all non-isomorphic $\ell$-candidates with at most $n$ vertices
by Corollary~\ref{cor-generate-cands}.  However, the test whether we have seen the currently considered $\ell$-candidate
before is somewhat time consuming and requires space linear in $C_\ell(n)$.  A more efficient approach is described in~\cite{McKaygener};
briefly, for each $\ell$-candidate, we canonically pick one way of obtaining it through the inverse to a $\star$-contraction,
then only process it when we reach it through this particular operation.  Let us mention that another advantage of this approach
is that it is easy to distribute, since each candidate is processed in isolation.

\begin{table}
\begin{center}
\begin{tabular}{|c|c|c|}
\hline
Vertices&4-candidates&bichromatic-forbidding\\
\hhline{|=|=|=|}
4	&1	&1\\
\hline
12	&1	&0\\
\hline
13	&3	&0\\
\hline
14	&11	&0\\
\hline
15	&37	&0\\
\hline
16	&134	&1\\
\hline
17	&470	&0\\
\hline
18	&1\,713	&0\\
\hline
19	&6\,150	&3\\
\hline
20	&22\,353	&10\\
\hline
21	&81\,158	&29\\
\hline
22	&296\,023	&122\\
\hline
23	&1\,081\,761	&490\\
\hline
24	&3\,965\,863	&1\,851\\
\hline
25	&14\,576\,016	&6\,915\\
\hline
26	&53\,728\,107	&25\,631\\
\hline
27	&198\,572\,799	&94\,323\\
\hline
28	&735\,894\,150	&346\,005\\
\hline
\end{tabular}
\end{center}
\caption{Numbers of all non-isomorphic 4-candidates with up to 28 vertices, and the numbers of non-isomorphic bichromatic forbidding
candidates with up to 28 vertices.  No rainbow- or diagonal-forbidding candidates except for the diamond were found.}\label{tab-numbers}
\end{table}

We implemented this algorithm and used it to generate all non-isomorphic 4-candidates with at most 28
vertices, testing more than $10^9$ graphs, see Table~\ref{tab-numbers} for exact statistics\footnote{More precisely,
two of the authors wrote their own implementations, one in C++, one in Python/SageMath; the latter
is slower and we only used it to double-check the claim up to 25 vertices.  The programs 
can be found at \url{https://lidicky.name/pub/mindeg5/}.}.
None of them is rainbow-forbidding, and only the one consisting of the 4-cycle bounding the outer face with a single
chord (the \emph{diamond}) is diagonal-forbidding.  This confirms Conjecture~\ref{conj-main-strong} for graphs with at most 28 vertices.

Interestingly, there turn out to be infinitely many bichromatic-forbidding 4-candidates.
Next, we describe a construction of such candidates. To justify its validity, it is
convenient to use the following result of Fisk~\cite{fisk1973combinatorial}.
\begin{lemma}\label{lemma-parity}
Let $G$ be a triangulation of an orientable surface, and let $\varphi$ be a
4-coloring of $G$.  For $i\in \{1,\ldots,4\}$, let $n_i$ be the number of
odd degree vertices of color $i$.  Then $n_1\equiv\ldots\equiv n_4\pmod 2$.
\end{lemma}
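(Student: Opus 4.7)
The plan is to interpret the $4$-coloring $\varphi$ as a simplicial map to a tetrahedron and invoke the classical fact that a simplicial map between closed oriented surfaces has a well-defined integer degree. Specifically, let $T$ be the tetrahedron with vertex set $\{1,2,3,4\}$, and let $\Phi\colon G\to\partial T$ be the simplicial map sending each vertex $v$ to $\varphi(v)$ and extending affinely on each triangle. Since $\varphi$ is proper, every triangle of $G$ is sent homeomorphically onto a face of $\partial T$, so $\Phi$ is a well-defined map from the oriented surface $\Sigma$ on which $G$ is drawn to the $2$-sphere $\partial T$. Let $d\in\mathbb{Z}$ denote its degree.

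First I will compute $d$ via preimages near each color. Fix $c\in\{1,2,3,4\}$ and choose a regular value $p_c$ of $\Phi$ lying slightly inside the open star of the vertex $c$ in $\partial T$. Then $\Phi^{-1}(p_c)$ is a finite set concentrated in neighborhoods of the vertices of $G$ of color $c$, and each such vertex $v$ contributes its local winding number $w(v)$, giving $d=\sum_{v\,:\,\varphi(v)=c}w(v)$.

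The key combinatorial step is to show $w(v)\equiv\deg_G(v)\pmod 2$. The link of $v$ in $G$ is a cycle $C_v$ of length $\deg_G(v)$, and $\Phi$ maps $C_v$ onto the link of $c$ in $\partial T$, i.e., onto the triangle spanned by the three colors different from $c$. With the orientation inherited from $\Sigma$, each edge of $C_v$ traverses an edge of this target triangle in either the positive or the negative direction; if $p$ go positive and $q$ negative, then $p+q=\deg_G(v)$ and $w(v)=(p-q)/3$. Reducing modulo $2$ gives $3w(v)\equiv p-q\equiv p+q=\deg_G(v)\pmod 2$, and since $3$ is odd this yields $w(v)\equiv\deg_G(v)\pmod 2$.

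Combining these observations, $d\equiv\sum_{v\,:\,\varphi(v)=c}\deg_G(v)\equiv n_c\pmod 2$ for every color $c$, so $n_1\equiv n_2\equiv n_3\equiv n_4\pmod 2$ as required. The main subtlety I anticipate is the topological justification that the integer $d$ is independent of the chosen regular value, and hence yields the same parity for all four colors; this is a standard consequence of $\Sigma$ being closed and orientable, but orientability is genuinely necessary, both to make each local degree $w(v)$ an integer and to make the global degree $d$ well-defined over $\mathbb{Z}$ rather than only over $\mathbb{Z}/2$.
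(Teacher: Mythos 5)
Your argument is correct. Note that the paper does not prove Lemma~\ref{lemma-parity} at all --- it quotes the statement from Fisk --- so there is no internal proof to compare against; what you give is a correct self-contained proof in the classical topological spirit: view the proper $4$-coloring as a nondegenerate simplicial map $\Phi\colon\Sigma\to\partial T\cong S^2$ and compute its degree locally over one color class. Both computations you rely on are sound. Since the coloring is proper, $\Phi^{-1}(c)$ is exactly the set of vertices of color $c$, so the global degree is the sum of the local degrees $w(v)$ over these vertices; and $w(v)$ equals the degree of the induced simplicial map from the link of $v$ (a cycle of length $\deg_G v$) to the link of $c$ (a $3$-cycle), so counting signed traversals of the three target edges gives $3w(v)=p-q\equiv p+q=\deg_G v\pmod 2$, hence $w(v)\equiv\deg_G v\pmod 2$ and $d\equiv n_c\pmod 2$ for every $c$. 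Two small remarks. First, if ``triangulation'' is read graph-theoretically, the link of a vertex is a closed walk rather than a cycle, but the same count goes through verbatim. Second, your closing claim that orientability is genuinely necessary applies only to the formulation over $\mathbb{Z}$: the parity conclusion holds on every closed surface, either by rerunning your argument with the mod-$2$ degree (which forces the number of link edges over each of the three target edges to have the same parity, so again $3d_2(v)\equiv\deg_G v$), or by a purely combinatorial double count: for a vertex $v$ of color $1$, the number of $\{3,4\}$-colored edges in its link is $\deg_G v$ minus twice the number of color-$2$ vertices in the link, hence $\equiv\deg_G v\pmod 2$, and each $\{3,4\}$-edge lies in exactly two triangles whose apices are colored $1$ or $2$, so summing over the color-$1$ and color-$2$ vertices yields $n_1\equiv n_2\pmod 2$ without any topology. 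This is only an observation about generality; your proof as written is complete for the orientable case stated in the lemma.
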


\begin{figure}
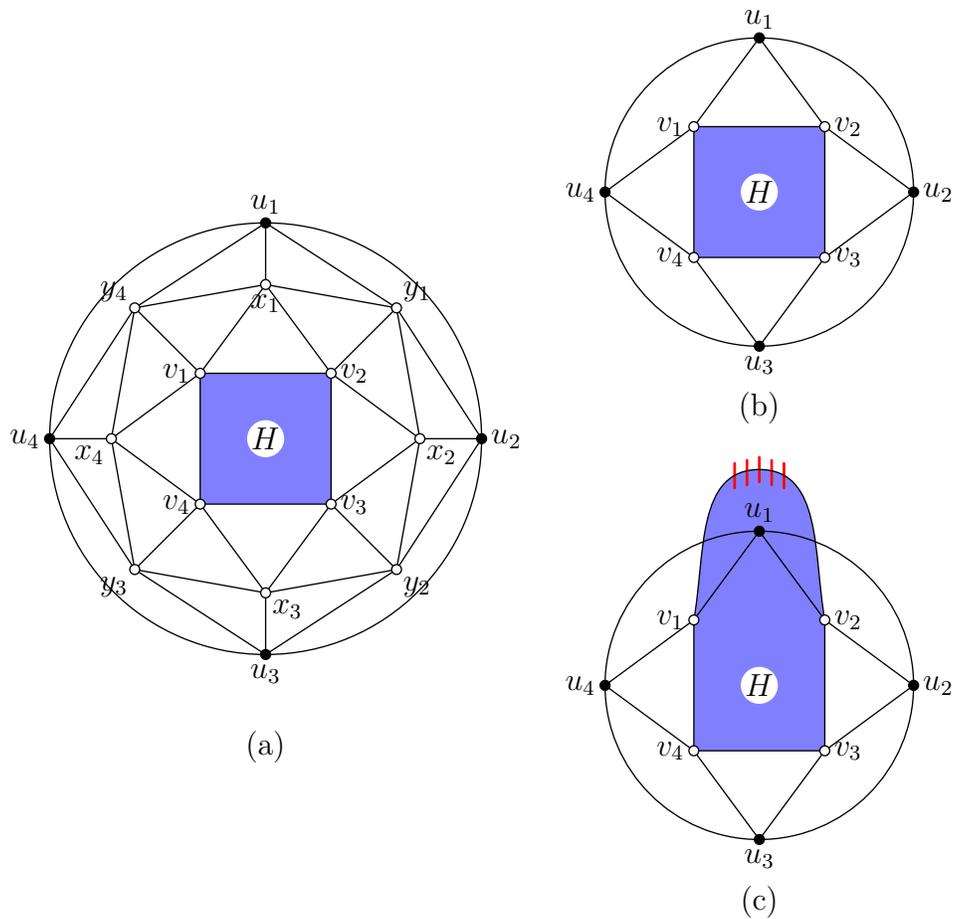

\begin{center}
\asyinclude[width=\textwidth]{fig-10ext.asy}
\end{center}
\caption{The constructions of bichromatic-forbidding 4-candidates.}\label{fig-10ext}
\end{figure}

The construction is as follows.

\begin{lemma}\label{lemma-10ext}
Let $H$ be a plane graph with the outer face bounded by the cycle $Q=v_1v_2v_3v_4$.
Let $G$ be the plane graph obtained as follows (see Figure~\ref{fig-10ext}(a)): We add a cycle $x_1y_1\ldots x_4y_4$
to the outer face of $H$, together with edges
$v_ix_i$, $v_ix_{i-1}$, and $v_iy_{i-1}$ for $i\in\{1,\ldots,4\}$,
where $x_0=x_4$ and $y_0=y_4$.  Then, we add the 4-cycle $K=u_1\ldots u_4$
bounding the outer face, and edges $u_iy_{i-1}$, $u_ix_i$, and $u_iy_i$
for $i\in \{1,\ldots, 4\}$.
The graph $G$ is bichromatic-forbidding if and only if the graph $H$ is bichromatic-forbidding.
Moreover, if $H$ is a 4-candidate, then $G$ is a 4-candidate.
\end{lemma}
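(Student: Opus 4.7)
The plan is to prove the two claims of the lemma separately, starting with the easier candidate-preservation and then the bichromatic-forbidding equivalence.

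For the candidate-preservation claim, I will inventory all triangular faces added by the construction. Between $Q$ and the middle $8$-cycle, at each $v_i$ one finds the triangles $v_ix_{i-1}y_{i-1}$ and $v_iy_{i-1}x_i$, together with a triangle $v_iv_{i+1}x_i$ over each edge of $Q$; between the middle $8$-cycle and $K$, at each $u_i$ the analogous triangles are $u_iy_{i-1}x_i$, $u_ix_iy_i$, and $u_iu_{i+1}y_i$. From this inventory, every $x_i$ has exactly the five neighbors $y_{i-1}, y_i, v_i, v_{i+1}, u_i$, every $y_i$ has the five neighbors $x_i, x_{i+1}, v_{i+1}, u_i, u_{i+1}$, and $\deg_G v_i = \deg_H v_i + 3 \geq 5$ since $v_i$ lies on the $4$-cycle $Q$. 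A direct check on these adjacency lists confirms that every triangle of $G$ involving a new vertex is one of those enumerated, hence bounds a face; all triangles entirely in $H$ continue to bound faces.

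For the forward direction of the equivalence, assume $H$ has a $4$-coloring $\varphi_H$ with $\varphi_H(v_1)=\varphi_H(v_3)=a$ and $\varphi_H(v_2)=\varphi_H(v_4)=b$. I extend by setting $\varphi(u_i)=\varphi_H(v_i)$, $\varphi(x_i)=c$ for all $i$, and $\varphi(y_i)=d$ for all $i$, where $\{c,d\}=\{1,2,3,4\}\setminus\{a,b\}$. A quick edge-by-edge verification using the incidence lists above confirms that $\varphi$ is a proper $4$-coloring of $G$ whose restriction to $K$ is bichromatic.

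For the backward direction, suppose $\varphi$ is a $4$-coloring of $G$ with $\varphi(u_1)=\varphi(u_3)=a$ and $\varphi(u_2)=\varphi(u_4)=b$; put $\{c,d\}=\{1,2,3,4\}\setminus\{a,b\}$. Since $y_i$ neighbors both $u_i$ and $u_{i+1}$, we have $\varphi(y_i)\in\{c,d\}$ for each $i$; encode this by $\epsilon\in\{+,-\}^4$ with $\epsilon_i=+$ iff $\varphi(y_i)=c$. The key observation is that at each cyclic ``transition'' $\epsilon_{i-1}\neq\epsilon_i$, the constraint $\varphi(x_i)\notin\{\varphi(y_{i-1}),\varphi(y_i),\varphi(u_i)\}$ forces $\varphi(x_i)$ to equal the unique element of $\{a,b\}\setminus\{\varphi(u_i)\}$. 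Since the number of transitions is even, the sixteen choices of $\epsilon$ collapse, under the four-fold rotational symmetry of the construction and the $c\leftrightarrow d$ swap, to four representatives: constant, alternating, single-differing, and two-runs-of-two. In the constant case, each $\varphi(x_i)$ has two possibilities, but tracing the constraints along $Q$ rules out $\varphi(v_i)\in\{c,d\}$ for every $i$, so $Q$ is bichromatic in colors $\{a,b\}$. In the alternating case, all four $\varphi(x_i)$ are forced and the $\varphi(v_i)$ resolve uniquely to a bichromatic pattern in $\{c,d\}$. In the other two cases, a finite enumeration of the two options for each unforced $x_i$ shows that some pair of adjacent $v_i$'s must receive the same color, contradicting properness of $\varphi$. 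This bounded case analysis, organized by transition pattern and reduced using the symmetries, is the main technical obstacle of the proof.
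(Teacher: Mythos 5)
Your candidate-preservation check and your easy direction (extending a bichromatic coloring of $H$ to one of $G$ by setting $\varphi(u_i)=\varphi_H(v_i)$, $\varphi(x_i)=c$, $\varphi(y_i)=d$) match the paper exactly, up to relabeling of colors. The interesting comparison is in the hard direction (a bichromatic coloring of $K$ forces a bichromatic coloring of $Q$). Here you take a genuinely different route: the paper builds an auxiliary plane triangulation $G'$ (replacing the interior of $Q$ by the chord $v_1v_3$ and capping the outer face by a new vertex) and applies Fisk's parity theorem (Lemma~\ref{lemma-parity}) to the odd-degree vertices to rule out a non-bichromatic pattern on $Q$, whereas you run a direct combinatorial case analysis. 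You observe that every $y_i$ is colored from $\{c,d\}$ (being adjacent to $u_i$ and $u_{i+1}$), classify the $\{c,d\}$-pattern on the $y$-ring by the number and placement of color transitions (even by cyclic parity), reduce to four representative patterns by the dihedral symmetry and $c\leftrightarrow d$ swap, and then enumerate the remaining free choices of $\varphi(x_i)$ to show each pattern either forces a bichromatic $Q$ or produces two adjacent equal-colored $v_i$'s. I spot-checked this enumeration (constant pattern: only $x_1=\ldots=x_4=d$ survives, giving $Q$ in $\{a,b\}$; alternating pattern: everything forced, $Q$ in $\{c,d\}$; single-differing and two-runs-of-two: all sub-choices contradict), and it does close. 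The trade-off is the usual one: the Fisk argument is shorter and sidesteps the bookkeeping by appealing to a global parity invariant, while your argument is elementary and self-contained but the constant-pattern sub-case requires a nontrivial global trace that your phrasing (``rules out $\varphi(v_i)\in\{c,d\}$'') slightly understates---locally $\varphi(v_i)=d$ is consistent and is only excluded after propagating around the cycle $Q$. If you flesh out that enumeration, your proof is a correct alternative.
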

\begin{proof}
Suppose first that $G$ is not bichromatic-forbidding, and thus $G$ has a 4-coloring $\varphi$ such
that $\varphi(u_1)=\varphi(u_3)=1$ and $\varphi(u_2)=\varphi(u_4)=2$.
We claim that $\varphi$ uses only two colors on $Q$, and thus the restriction of $\varphi$ to $H$ shows that $H$ is not bichromatic-forbidding.
For contradiction, suppose that $\varphi$ uses at least three colors on $Q$, and thus by symmetry,
we can assume that $\varphi(v_1)\neq \varphi(v_3)$.

Let $G'$ be the triangulation of the plane obtained from $G-(V(H)\setminus V(K))-(E(H)\setminus E(K))$
by adding the edge $v_1v_3$ and a vertex $z$ adjacent to $u_1$, \ldots, $u_4$,
and let $\varphi'$ be the 4-coloring of $G'$ obtained from the restriction of $\varphi$ to $V(G)\setminus (V(H)\setminus V(K))$
by letting $\varphi'(z)=3$.  Let $M=\{v_1,\ldots, v_4, x_1,\ldots,x_4, y_1,\ldots, y_4\}$ and for $c\in \{1,\ldots,4\}$, let $m_c$ be the
number of vertices in $M$ of color $c$.  Note that $M$ is covered by four triangles in $G'[M]$, namely $v_ix_{i-1}y_{i-1}$ for $i\in\{1,\ldots,4\}$,
and thus $m_1,\ldots,m_4\le 4$.  Moreover, since $\varphi'(u_1)=\varphi'(u_3)=1$, the color $1$ can only be used
on $M$ on the triangles $v_1v_4x_4$ and $v_2v_3x_2$, and thus $m_1\le 2$, and symmetrically $m_2\le 2$.  Since $m_1+\cdots+m_4=|M|=12$,
this is only possible if $m_1=m_2=2$ and $m_3=m_4=4$.

Note that the odd degree vertices of $G'$ are exactly those in $M'=M\setminus\{v_1,v_3\}$.  Each of the two colors that do not appear
on $v_1$ and $v_3$ is used an even number of times on $M'$ (twice or four times).
By Lemma~\ref{lemma-parity}, the remaining two colors $\varphi'(v_1)$ and $\varphi'(v_3)$ also have to be used even number of times on $M'$.  However, then
the color $\varphi'(v_1)$ would be used odd number of times on $M$, which is a contradiction, since $m_{\varphi'(v_1)}\in\{2,4\}$.

Conversely, suppose that $H$ is not bichromatic-forbidding, and thus $H$ has a 4-coloring $\psi$ such that $\psi(v_1)=\psi(v_3)=1$ and $\psi(v_2)=\psi(v_4)=2$.
Let us extend $\psi$ by letting $\psi(x_1)=\ldots=\psi(x_4)=3$, $\psi(y_1)=\ldots=\psi(y_4)=4$,
$\psi(u_1)=\psi(u_3)=1$ and $\psi(u_2)=\psi(u_4)=2$.  This gives a 4-coloring of $G$ which shows that $G$ is not bichromatic-forbidding.

Therefore, the graph $G$ is bichromatic-forbidding if and only if the graph $H$ is.
If $H$ is a $4$-candidate, then note that the construction of $G$ does not create non-facial triangles,
and clearly all vertices of $M$ have degree at least five in $G$; therefore, $G$ is a 4-candidate as well.
\end{proof}

According to Lemma~\ref{lemma-10ext}, we can start with the diamond (a trivial bichromatic-forbidding 4-candidate)
and iterate the construction described in the statement of the lemma to obtain larger and larger
bichromatic-forbidding 4-candidates.

A simpler construction is given in the following lemma.
\begin{lemma}\label{lemma-extproj}
Let $H$ be a 4-candidate with the outer face bounded by an induced cycle $Q=v_1v_2v_3v_4$.
Let $G$ be a canvas obtained from $H$ by adding a cycle $K=u_1u_2u_3u_4$ bounding the outer face
and the edges $u_iv_i$ and $u_iv_{i+1}$ for $i\in \{1,\ldots,4\}$, where $v_5=v_1$ (see Figure~\ref{fig-10ext}(b)).
Then $G$ is a 4-candidate, and it is bichromatic-forbidding if and only if $H$ is.
\end{lemma}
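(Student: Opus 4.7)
The plan is to verify two things in turn: that $G$ satisfies the axioms of a $4$-candidate, and that the bichromatic-forbidding property transfers in both directions. Both are routine; the construction is deliberately designed so that the added ``collar'' between $K$ and $Q$ uses only two fresh colors, making the coloring argument almost automatic.

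For the candidate check, I would first observe that the annular region between $K$ and $Q$ is partitioned into the triangles $u_iv_iv_{i+1}$ and $u_iu_{i+1}v_{i+1}$, so together with the internal faces of $H$ all internal faces of $G$ are triangles of length three, with $K$ bounding the outer face. To rule out non-facial triangles, I would use that each $u_i$ has only four neighbors, namely $u_{i-1}, u_{i+1}, v_i, v_{i+1}$, so every triangle through a $u$-vertex must be one of the three facial triangles just listed; triangles entirely inside $H$ are facial since $H$ is a candidate. For the degree condition, the only new internal vertices of $G$ (relative to $H$) are the vertices of $Q$, and each $v_i$ gains exactly the two neighbors $u_{i-1}$ and $u_i$. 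It therefore suffices to argue $\deg_H(v_i)\ge 3$: the internal face of $H$ incident with the edge $v_{i-1}v_i$ is a triangle whose third vertex $w$ is a common neighbor of $v_{i-1}$ and $v_i$, and $w\ne v_{i+1}$ because $Q$ is induced, so $w$ is a third distinct neighbor of $v_i$ in $H$.

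For the ``only if'' direction, I would argue by contrapositive and assume $G$ has a 4-coloring $\varphi$ with $\varphi(u_1)=\varphi(u_3)=1$ and $\varphi(u_2)=\varphi(u_4)=2$. Each $v_i$ is adjacent to both $u_{i-1}$ and $u_i$, whose colors are $\{1,2\}$, forcing $\varphi(v_i)\in\{3,4\}$. Since $v_1v_2v_3v_4$ is a proper 4-cycle, the only way to use just two colors is alternately, giving a bichromatic pattern on $Q$; restricting $\varphi$ to $V(H)$ therefore exhibits a bichromatic coloring of $H$, so $H$ is not bichromatic-forbidding.

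For the ``if'' direction, given a 4-coloring $\psi$ of $H$ with $\psi(v_1)=\psi(v_3)=1$ and $\psi(v_2)=\psi(v_4)=2$, I would extend by setting $\psi(u_1)=\psi(u_3)=3$ and $\psi(u_2)=\psi(u_4)=4$. Each $u_i$ receives a color in $\{3,4\}$ while its neighbors $v_i,v_{i+1}$ have colors in $\{1,2\}$, so the coloring is proper on the added edges, proper on $K$ by alternation, and its restriction to $K$ is bichromatic. No step presents a real obstacle; the only mildly subtle point is verifying $\deg_H(v_i)\ge 3$ so that the promotion of $Q$-vertices to internal status in $G$ still satisfies the minimum-degree-five requirement.
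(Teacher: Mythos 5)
Your proposal is correct and follows essentially the same route as the paper: the key point in both is that the induced cycle $Q$ forces $\deg_H(v_i)\ge 3$, hence degree at least five in $G$, and the two coloring directions are handled by exactly the same bichromatic restriction/extension arguments (your colors $\{3,4\}$ on $K$ versus the paper's $\{1,2\}$ is just a relabeling). Your extra verifications (the explicit triangulation of the collar and the check that no non-facial triangles arise) are details the paper leaves implicit, not a different method.
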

\begin{proof}
Since $Q$ is an induced cycle in $H$, all its vertices have degree at least three in $H$ and at least five in $G$.
It follows that $G$ is a candidate.

Suppose that $G$ is not bichromatic-forbidding, and thus $G$ has a 4-coloring $\varphi$ such that $\varphi(u_1)=\varphi(u_3)=1$
and $\varphi(u_2)=\varphi(u_4)=2$.  Then $\varphi$ can only use colors $3$ and $4$ on the 4-cycle $Q$,
and thus the restriction of $\varphi$ to $H$ shows that $H$ is not bichromatic-forbidding.

Conversely, if $H$ is not bichromatic-forbidding, then $H$ has a 4-coloring $\psi$ such that $\psi(v_1)=\psi(v_3)=3$
and $\psi(v_2)=\psi(v_4)=4$.  We can extend $\psi$ to a 4-coloring of $G$ by letting $\psi(u_1)=\psi(u_3)=1$
and $\psi(u_2)=\psi(u_4)=2$, thus showing that $G$ is not bichromatic-forbidding.

Therefore, $G$ is bichromatic-forbidding if and only if $H$ is.
\end{proof}

Since Lemma~\ref{lemma-extproj} requires $Q$ to be an induced cycle, it cannot be applied to the diamond directly.
However, it can be applied to the 4-candidates arising from Lemma~\ref{lemma-10ext}.
Note that Lemma~\ref{lemma-10ext} produces 4-candidates with all external vertices
of degree five and Lemma~\ref{lemma-extproj} ones with all external vertices of degree four.  There is also the following
variant of Lemma~\ref{lemma-extproj} producing 4-candidates with three external vertices of degree four and one of degree
at least seven.
\begin{lemma}\label{lemma-extprojprime}
Let $H$ be a 4-candidate with the outer face bounded by an induced cycle $Q=v_1v_2v_3v_4$,
and let $v_1v_2u_1$ be the triangle bounding the other face incident with the edge $v_1v_2$.
Let $G$ be a canvas obtained from $H-v_1v_2$ by adding a cycle $K=u_1u_2u_3u_4$ bounding the outer face
and the edges $u_iv_i$ and $u_iv_{i+1}$ for $i\in \{2,3,4\}$, where $v_5=v_1$ (see Figure~\ref{fig-10ext}(c)).
If $\deg_H v_1,\deg_H v_2\ge 5$, then $G$ is a 4-candidate.
Moreover, $G$ is bichromatic-forbidding if and only if $H$ is.
\end{lemma}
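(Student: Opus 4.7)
The plan is to verify two claims separately: that $G$ is a $4$-candidate, and that $G$ is bichromatic-forbidding if and only if $H$ is. The candidate verification involves straightforward checks on the outer face, internal faces, and internal vertex degrees, plus one more delicate point on non-facial triangles; the coloring equivalence is then essentially direct on both sides.

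For the candidate part, the construction gives $K$ as the outer face and triangulates the rest of the region outside $H-v_1v_2$ by seven triangles, so all internal faces of $G$ are triangles. For the degree condition, the internal vertex $u_1$ has $\deg_G u_1=\deg_H u_1+2\ge 7$; the vertices $v_1,v_2$ become internal but keep their old degrees (losing $v_1v_2$ and gaining one edge to a $u_i$), so $\ge 5$ by hypothesis; the vertices $v_3,v_4$ become internal with degree $\deg_H v_i+2$, which is $\ge 5$ because the induced outer cycle $Q$ forces each of $v_3,v_4$ to have at least one internal $H$-neighbor (the apex of the triangular face incident to $v_2v_3$ or $v_3v_4$ inside $Q$). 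The delicate point is showing that $G$ has no non-facial triangles. Any such triangle must use a newly added edge, and a short inspection of the degree-four neighborhoods of $u_2,u_3,u_4$ reduces the problem to excluding the edges $u_1v_3$ and $u_1v_4$ from $E(H)$, since otherwise $u_1u_2v_3$ or $u_1u_4v_4$ would be non-facial triangles in $G$. If $u_1v_3\in E(H)$, then $v_2v_3u_1$ is a triangle of $H$ and hence must bound a face; together with the facial triangle $v_1v_2u_1$ this forces $u_1$ to be the only internal neighbor of $v_2$, hence $\deg_H v_2=3$, contradicting $\deg_H v_2\ge 5$. The hypothesis $\deg_H v_1\ge 5$ analogously rules out $u_1v_4$, and this step is precisely where the degree assumptions are used; it is the main obstacle of the proof.

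For the bichromatic-forbidding equivalence, both directions are direct. If $\varphi$ is a $4$-coloring of $G$ with $\varphi(u_1)=\varphi(u_3)=1$ and $\varphi(u_2)=\varphi(u_4)=2$, each $v_i$ is adjacent to a $u_j$ of each color, so $\varphi(v_i)\in\{3,4\}$; the path $v_2v_3v_4v_1$ (still present in $G$) then forces $\varphi(v_1)=\varphi(v_3)\ne\varphi(v_2)=\varphi(v_4)$, and since these two colors differ, restoring the edge $v_1v_2$ to pass to $H$ is consistent, yielding a bichromatic $4$-coloring of $H$. Conversely, given a bichromatic $4$-coloring $\psi$ of $H$, after relabeling I may take $\psi(v_1)=\psi(v_3)=3$ and $\psi(v_2)=\psi(v_4)=4$; then $\psi(u_1)\in\{1,2\}$, and assuming $\psi(u_1)=1$ (swapping colors $1,2$ if needed) the extension $\psi(u_2)=\psi(u_4)=2$, $\psi(u_3)=1$ is easily checked against the four-vertex neighborhoods of $u_2,u_3,u_4$ to give a proper $4$-coloring of $G$.
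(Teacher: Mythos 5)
Your proof is correct and follows essentially the same route as the paper: verify the degree and face conditions to see that $G$ is a $4$-candidate, then transfer bichromatic colorings in both directions exactly as in the paper's argument. The only difference is that you spell out the exclusion of the non-facial triangles $u_1u_2v_3$ and $u_1u_4v_4$ (via $u_1v_3,u_1v_4\notin E(H)$, forced by the degree hypotheses), a check the paper leaves implicit in ``it follows that $G$ is a candidate,'' and your treatment of it is accurate.
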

\begin{proof}
If $\deg_H v_1\ge 5$ and $\deg_H v_2\ge 5$, then all vertices of $Q$ have degree at least five in $G$.
From this, it follows that $G$ is a candidate.

Suppose that $G$ is not bichromatic-forbidding, and thus $G$ has a 4-coloring $\varphi$ such that $\varphi(u_1)=\varphi(u_3)=1$
and $\varphi(u_2)=\varphi(u_4)=2$.  Then $\varphi$ can only use colors $3$ and $4$ on the path $Q-v_1v_2$.
This implies that $\varphi(v_1)\neq \varphi(v_2)$, and thus the restriction of $\varphi$ to $V(H)$
is a proper 4-coloring of $H$.  This 4-coloring shows that $H$ is not bichromatic-forbidding.

Conversely, if $H$ is not bichromatic-forbidding, then $H$ has a 4-coloring $\psi$ such that $\psi(v_1)=\psi(v_3)=3$
and $\psi(v_2)=\psi(v_4)=4$.  By symmetry, we can assume that $\psi(u_1)=1$.  We can extend $\psi$ to a 4-coloring of
$G$ by letting $\psi(u_3)=1$ and $\psi(u_2)=\psi(u_4)=2$, thus showing that $G$ is not bichromatic-forbidding.

Therefore, $G$ is bichromatic-forbidding if and only if $H$ is.
\end{proof}

The aforementioned enumeration of $4$-candidates with at most $28$ vertices supports the following conjecture.
\begin{conjecture}
Every bichromatic-forbidding 4-candidate can be obtained from the diamond by a sequence of the operations
described in Lemmas~\ref{lemma-10ext}, \ref{lemma-extproj}, and \ref{lemma-extprojprime}.
\end{conjecture}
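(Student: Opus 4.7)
The plan is to prove the conjecture by strong induction on $|V(G)|$. The base case is the diamond, which is (by the computer search up to 28 vertices) the smallest bichromatic-forbidding 4-candidate. For the inductive step, given a bichromatic-forbidding 4-candidate $G$ with $|V(G)|>4$, the goal is to identify in $G$ an ``outer shell'' that matches the image of one of the three operations from Lemmas~\ref{lemma-10ext}, \ref{lemma-extproj}, and~\ref{lemma-extprojprime}, peel it off, and show that the resulting graph $H$ is again a bichromatic-forbidding 4-candidate, so that the induction hypothesis applies to $H$.

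First I would reduce to the case that the outer 4-cycle $K=u_1u_2u_3u_4$ is induced: a chord of $K$ forces, together with the requirement that every triangle bounds a face and internal vertices have degree at least five, that $G$ is exactly the diamond. Assuming $K$ is induced, I would classify $G$ by the degree sequence on the outer face. The three operations produce outer shells with characteristic profiles: Lemma~\ref{lemma-extproj} gives $\deg u_i=4$ for all four $i$; Lemma~\ref{lemma-10ext} gives $\deg u_i=5$ for all four $i$; Lemma~\ref{lemma-extprojprime} gives three external vertices of degree $4$ and one of degree at least $7$. The heart of the argument is to show that a minimal bichromatic-forbidding 4-candidate other than the diamond must exhibit one of these three boundary profiles, and that the neighborhoods of the $u_i$ one step into the interior have exactly the combinatorial structure produced by the corresponding construction.

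For each profile, the reduction itself is then short: in the all-degree-$4$ case, each $u_i$ has two internal neighbors $v_i,v_{i+1}$, and one checks that $v_1v_2v_3v_4$ bounds a face of $H=G-\{u_1,u_2,u_3,u_4\}$; the ``only if'' direction of Lemma~\ref{lemma-extproj} then gives that $H$ is bichromatic-forbidding, while verifying that $H$ remains a 4-candidate (no new non-facial triangles arise, internal degrees stay at least $5$) is routine using that $G$ has no separating triangles. The all-degree-$5$ case is analogous but with the eight-vertex shell $x_1,y_1,\ldots,x_4,y_4$ of Lemma~\ref{lemma-10ext}; one must identify this shell combinatorially around $K$ and invoke the ``only if'' direction of that lemma. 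The mixed case proceeds similarly via Lemma~\ref{lemma-extprojprime}, using the high-degree external vertex as the pivot $u_1$ and re-inserting the edge $v_1v_2$.

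The hard part will be the classification of possible outer-face profiles. Ruling out degree patterns such as $(4,4,5,5)$ or $(5,5,5,6)$ for a minimal counterexample requires exploiting the bichromatic-forbidding property in a delicate way; the natural tool is to combine the Kempe-chain analysis of Lemma~\ref{obs-adjacent} and Corollary~\ref{cor-allbutone} with the parity argument of Lemma~\ref{lemma-parity}, applied to suitable planar triangulations obtained by capping $K$ with a small gadget. A further obstacle is that the proof must ensure no non-facial triangle is created by the reduction; this may force an enlargement of the operation list with a few degenerate variants that the computer enumeration can confirm do not actually arise. The empirical data in Table~\ref{tab-numbers} strongly suggests that the three operations suffice, but upgrading this experimental fact to a proof appears to require a genuinely new structural insight into how bichromatic obstructions propagate from the interior of a candidate to its boundary.
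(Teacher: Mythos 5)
There is a fundamental gap, and it starts before the details: the statement you are trying to prove is stated in the paper as a \emph{conjecture}, supported only by the computer enumeration of 4-candidates with at most 28 vertices (Table~\ref{tab-numbers}); the paper offers no proof, and your proposal does not supply one either. The inductive skeleton you describe (peel off an outer shell matching one of the operations of Lemmas~\ref{lemma-10ext}, \ref{lemma-extproj}, \ref{lemma-extprojprime}, apply the ``only if'' directions of those lemmas, recurse) is the obvious strategy, but its entire content is concentrated in the classification step you defer: showing that every bichromatic-forbidding 4-candidate other than the diamond has external degree profile exactly $(4,4,4,4)$, $(5,5,5,5)$, or $(4,4,4,\ge 7)$, \emph{and} that the first and second neighborhoods of the outer cycle have precisely the rigid structure produced by the corresponding construction. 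That claim is essentially equivalent to the conjecture itself, and nothing in your sketch makes progress on it. The tools you propose to invoke do not obviously bear on it: Lemma~\ref{obs-adjacent} and Corollary~\ref{cor-allbutone} only constrain which \emph{types} of colorings of a 4-cycle can simultaneously fail to extend (and indeed show bichromatic is the only type that can be forbidden when the boundary is induced), and Lemma~\ref{lemma-parity} is used in the paper merely to verify that one specific gadget preserves the bichromatic-forbidding property; neither says anything about ruling out boundary profiles such as $(4,4,5,5)$ or about forcing the eight-vertex shell of Lemma~\ref{lemma-10ext} when all external degrees are five. You acknowledge this yourself when you write that the upgrade ``appears to require a genuinely new structural insight,'' which is an admission that the proposal is a plan, not a proof.

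Even the parts you call routine need care. In the all-degree-four case you must show that the second neighborhood $v_1v_2v_3v_4$ is a cycle bounding the outer face of $H$, that it is \emph{induced} in $H$ (Lemma~\ref{lemma-extproj} requires this for the reverse application, and a chord would instead point toward the degenerate situation handled by Lemma~\ref{lemma-extprojprime}), and that $H$ has no non-facial triangles; these are plausible but not automatic, and your own remark that the operation list may need ``a few degenerate variants'' concedes that the stated conjecture might not even be reproducible verbatim by your argument. As written, the proposal establishes nothing beyond what the enumeration already shows.
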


\section{Generating rainbow-forbidding weak $4$-candidates}\label{sec-gen4}

A canvas $G$ is a \emph{weak $\ell$-candidate} if its outer face is bounded by an $\ell$-cycle and all triangles in $G$ bound faces.  Note that this in particular implies that
all internal vertices of $G$ have degree at least four.  It is easy to see that to enumerate non-4-colorable graphs
in $\CC_0$, it suffices to enumerate the rainbow-forbidding weak $4$-candidates.
To do so, we fundamentally use the reduction discussed in the introduction,
deletion of an internal vertex $v$ of degree four followed by identification of two non-adjacent neighbors of $v$.
An issue here is that this might create a non-facial triangle, leading to a somewhat more involved argument presented in this section.
Let us remark that the same procedure can be used to generate diagonal-forbidding weak $4$-candidates; thus, we say that a plane graph $H$ with the
outer face bounded by a 4-cycle is \emph{restrictive} if it is rainbow- or diagonal-forbidding.
The \emph{kind} of $H$ is \texttt{r} if $H$ is rainbow-forbidding and \texttt{d} if $H$ is diagonal-forbidding.

Let $G$ be a restrictive weak $4$-candidate and let $P=uvw$ be a path in $G$,
where
\begin{itemize}
\item $v$ is an internal vertex of degree four and $u$, $x$, $w$, $y$ are its neighbors in order according to the plane drawing of $G$,
\item at least one of the vertices $u$ and $w$ is internal, and
\item $uw\not\in E(G)$ and $x$, $y$, and $v$ are the only common neighbors of $u$ and $w$.
\end{itemize}
Let $G/P$ be the canvas obtained by contracting the edges of $P$ and suppressing the 2-faces; by the assumptions, $G/P$ does not have loops
or parallel edges.  We say that a vertex $z$ of $G/P$ is \emph{triangle-isolated} if there exists a triangle $T$
in $G/P$ such that the open disk bounded by $T$ contains $z$.  For a non-negative integer $m$, we say that the path $P$ is \emph{$m$-contractible} if it satisfies the
conditions described above and the open disk bounded by any triangle in $G/P$ contains at most $m$ vertices.
Since every triangle in $G$ is facial, observe that all triangle-isolated vertices in $G/P$ are contained in open disks bounded by at most two triangles,
corresponding to 5-cycles in $G$ passing through $P$ and drawn on the opposite sides of $P$.
Hence, if the path $P$ is $m$-contractible, then $G/P$ contains at most $2m$ triangle-isolated vertices.

\begin{observation}\label{obs-reducopa}
Let $G$ be a weak $4$-candidate, let $P=uvw$ be an $m$-contractible path in $G$ for an integer $m\ge 0$, and let $F$ be the graph obtained from $G/P$ by deleting all triangle-isolated vertices.
If $G$ is restrictive, then $F$ is restrictive as well, and $F$ has the same kind as $G$.
\end{observation}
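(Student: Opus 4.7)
The plan is to prove the contrapositive: starting from a 4-coloring of $F$ matching the type $\approx$ forbidden by $G$, construct a 4-coloring of $G$ that matches $\approx$ on its outer 4-cycle $K$, obtaining a contradiction. Assume therefore that $F$ admits a 4-coloring $\varphi$ of the corresponding type on its outer 4-cycle, which is the outer 4-cycle of $G$ with the contracted vertex $p$ replacing $u$ or $w$ when one of them is external (otherwise the two cycles coincide, since $v$ is internal and external vertices are never triangle-isolated).

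The first step is to extend $\varphi$ to a proper 4-coloring $\varphi'$ of $G/P$. The vertices of $V(G/P)\setminus V(F)$ are the triangle-isolated ones, and by the discussion preceding the observation these lie in the open disks of at most two triangles of $G/P$, drawn on opposite sides of $P$ and arising from 5-cycles of $G$ through $P$. Their open disks are pairwise disjoint, and the three vertices of each such triangle $T$ lie in $F$, so they carry three distinct colors under $\varphi$. For each such $T$, the subgraph of $G/P$ drawn in the closed disk bounded by $T$ is a plane graph with triangular outer face, hence 4-colorable by the Four Color Theorem; permuting the colors in such a coloring to match $\varphi$ on $V(T)$ yields an extension of $\varphi$ inside $T$. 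These partial extensions, together with $\varphi$, glue into a proper 4-coloring $\varphi'$ of $G/P$.

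The second step is to lift $\varphi'$ to a proper 4-coloring $\psi$ of $G$. Setting $c:=\varphi'(p)$, let $\psi(z)=\varphi'(z)$ for $z \in V(G)\setminus\{u,v,w\}$, let $\psi(u)=\psi(w)=c$, and choose $\psi(v)$ in the nonempty set $\{1,2,3,4\}\setminus\{c,\varphi'(x),\varphi'(y)\}$. This is proper: every neighbor of $u$ in $G$ other than $v$ is a neighbor of $p$ in $G/P$ and hence has color distinct from $c$, and similarly for $w$; the vertices $u$ and $w$ are non-adjacent in $G$ by hypothesis; and the four neighbors $u,x,w,y$ of $v$ all receive colors different from $\psi(v)$ by construction. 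Under the natural identification of the outer 4-cycles of $G$ and $G/P$ (with $p$ corresponding to whichever of $u$, $w$ is external, if any), the restriction of $\psi$ to $K$ matches the restriction of $\varphi'$ to the outer 4-cycle of $G/P$, so $\psi$ realizes the forbidden type $\approx$ on $K$, contradicting the assumption that $G$ is $\approx$-forbidding.

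The main obstacle is the first step: one must carefully use the structural fact that, because $G$ is a weak $4$-candidate with no non-facial triangles, the only triangles of $G/P$ that can enclose vertices are the (at most two) triangles produced by contracting $P$ inside 5-cycles on either side of $P$, so their open disks are automatically disjoint and their boundary vertices all lie in $F$; this makes the piecewise invocation of the Four Color Theorem coherent. The subsequent lifting step and the verification of type preservation are then essentially bookkeeping once one sets up the correspondence between the outer 4-cycles of $G$, $G/P$, and $F$.
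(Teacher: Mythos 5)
Your proposal is correct and follows essentially the same route as the paper's proof: extend the given 4-coloring of $F$ to $G/P$ by applying the Four Color Theorem inside each non-facial triangle and permuting colors to agree on its boundary, then lift to $G$ by giving $u$ and $w$ the color of the contracted vertex and coloring $v$ greedily. Your additional bookkeeping about the disjointness of the two triangle disks and the identification of the outer cycles is a more explicit version of what the paper leaves implicit.
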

\begin{proof}
It suffices to show that every $4$-coloring $\varphi$ of $F$ extends to a $4$-coloring of $G$.  First, for every non-facial triangle $T$ in $G/P$,
we can extend $\varphi$ to the subgraph of $G/P$ drawn inside $T$: The subgraph has a $4$-coloring by the Four Color Theorem, and by permuting the colors,
we can modify it to match $\varphi$ on $T$.  Thus, $\varphi$ extends to a $4$-coloring of $G/P$.
Next, we extend $\varphi$ to $G$: We give $u$ and $w$ the color of the vertex resulting from the contraction of $P$, and then extend the coloring to $v$
greedily.
\end{proof}

It will be convenient to consider a variant of this operation: A \emph{bidiamond} consists of two adjacent internal vertices $u$ and $v$ of degree four.
Let $x$ and $y$ be the common neighbors of $u$ and $v$, and let $u'$ and $v'$ be the neighbors of $u$ and $v$, respectively, not in $\{u,v,x,y\}$.
If at least one of $x$ and $y$ is internal and there is no path of length at most three from $x$ to $y$ in $G-\{u',u,v,v'\}$, then we say that the bidiamond
is \emph{contractible}.  The \emph{contraction} of the bidiamond is the canvas obtained from $G-\{u,v\}$ by identifying $x$ with $y$ and
suppressing the 2-faces.  Observe that the resulting canvas is a restrictive weak $4$-candidate of the same kind as $G$.
According to the following lemma, even non-contractible bidiamonds are useful.

\begin{lemma}\label{lemma-redu-bidiamond}
Let $G$ be a restrictive triangulated weak $4$-candidate, and suppose that vertices $u$ and $v$ form a bidiamond in $G$.
Then either this bidiamond is contractible, or $G$ contains a $1$-contractible path.
\end{lemma}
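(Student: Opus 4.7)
The plan is to use the forced local structure of the bidiamond to exhibit a $1$-contractible path whenever $\{u,v\}$ is non-contractible. Because $\deg u=\deg v=4$ and every face incident with $u$ or $v$ is a triangle, the cyclic neighbor lists must be $(x,v,y,u')$ at $u$ and $(x,u,y,v')$ at $v$; this forces $u'x,u'y,v'x,v'y\in E(G)$, so $\{u,v,u',v'\}$ are all common neighbors of $x$ and $y$, and $xy\notin E(G)$ because otherwise $xuy$ would be a non-facial triangle.

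I would then identify two natural candidate paths, $P_1 = vuu'$ (middle $u$) and $P_2 = uvv'$ (middle $v$), each of which places $x,y$ on the sides. The endpoints of $P_1$ are non-adjacent (else $\deg v>4$) and one endpoint ($v$) is internal; the common neighbors of $v$ and $u'$ always include $\{x,y,u\}$, with $v'$ being an additional common neighbor iff $u'v'\in E(G)$. A symmetric statement holds for $P_2$. Hence both $P_1$ and $P_2$ satisfy the basic path hypothesis of the definition of contractibility precisely when $u'v'\notin E(G)$.

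The main case is $u'v'\notin E(G)$, where I would show that at least one of $P_1$, $P_2$ is $1$-contractible. After contracting $P_1$ to a vertex $w$, the cyclic neighbors of $w$ in $G/P_1$ are $v',x,z_1,\ldots,z_k,y$, where $z_1,\ldots,z_k$ are the neighbors of $u'$ other than $u,x,y$. Every non-facial triangle in $G/P_1$ passes through $w$ and corresponds to an edge of $G$ joining two non-consecutive neighbors of $w$; such an edge yields a short detour between $x$, $y$, $v'$, or the $z_i$'s. Using that every triangle in $G$ bounds a face, together with the non-contractibility hypothesis --- which supplies either external status of both $x,y$ or a concrete $x$--$y$ path of length at most three in $G-\{u,u',v,v'\}$ --- I would argue that any such triangle encloses at most one vertex, else we would produce a non-facial triangle or a shorter forbidden $x$--$y$ path in $G$. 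When the check for $P_1$ fails because an obstructing detour sits on the $u'$-side of the edge $uv$, the symmetric check for $P_2$ succeeds on the $v'$-side, since planarity forces the guaranteed short $x$--$y$ detour to lie on a single side of $uv$.

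The remaining subcase $u'v'\in E(G)$ is handled by a short reduction: the chord $u'v'$ places $\{u,u'\}$ in another bidiamond configuration (when $\deg u'=4$ and $u'$ is internal) whose common neighbors are exactly $\{x,y\}$, so the previous argument applies to this new bidiamond; if $\deg u'>4$ or $u'$ is external (and symmetrically for $v'$), the edge $u'v'$ contributes only an already-facial triangle at $w$ in $G/P_1$, so the analysis of the main case carries through. The principal obstacle is the case analysis in the main case: each putative non-facial triangle in $G/P_1$ must be matched against the non-contractibility hypothesis, and planarity must be invoked to show that $P_1$ and $P_2$ cannot both fail simultaneously.
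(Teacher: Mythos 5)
Your setup (the forced local picture around the bidiamond, the observation that $u'x,u'y,v'x,v'y\in E(G)$, and the plan to show that a path such as $vuu'$ or $uvv'$ is $1$-contractible) is the same starting point as the paper, but the execution has genuine gaps. First, the case $u'v'\in E(G)$ is not a case at all: since $u'$ and $v'$ are both adjacent to $x$ and to $y$, the edge $u'v'$ would create the triangles $u'xv'$ and $u'yv'$, one of which encloses $\{u,v\}$ together with $x$ or $y$ and hence is non-facial, contradicting the definition of a weak $4$-candidate. The paper dismisses this in one sentence; your proposed handling is instead both unnecessary and unsound -- the ``new bidiamond $\{u,u'\}$'' recursion is not well-founded as described, and your claim that ``the analysis of the main case carries through'' via $G/P_1$ contradicts your own earlier (correct) remark that $P_1$ fails the basic contraction hypothesis precisely when $u'v'\in E(G)$.

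Second, and more seriously, the core step in your main case is not proved, and the mechanism you propose for it is wrong. The obstructions to $1$-contractibility of $P_1=vuu'$ and of $P_2=uvv'$ are the \emph{same} $5$-cycles $u'uvv'z$, where $z$ is a common neighbor of $u'$ and $v'$ (the cycles through $x$ or $y$ enclose nothing); such a cycle passes through both $u'$ and $v'$, so it obstructs both paths simultaneously, and the claimed dichotomy ``if the check for $P_1$ fails on one side, the check for $P_2$ succeeds on the other'' has no content. Likewise, ``producing a shorter forbidden $x$--$y$ path'' is not a contradiction: under non-contractibility of the bidiamond an $x$--$y$ path of length at most three in $G-\{u',u,v,v'\}$ is exactly what is \emph{granted}, not forbidden. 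The paper's proof uses that path $R$ constructively: choosing $z$ with the disk $\Lambda_1$ of $u'uvv'z$ containing $x$ maximal, planarity forces $R$ to cross the cycle at $z$, and since $zy\notin E(G)$ (again by facial triangles) one gets $R=xzy'y$, hence $xz\in E(G)$; then $x$ is adjacent to all five cycle vertices, so $\Lambda_1$ consists only of the five faces at $x$, and on the other side the analogous maximal cycle has $z'\in\{y,y'\}$ and encloses at most $y$, which is what makes $u'uv$ $1$-contractible. None of this machinery (maximal choice of $z$, $zy\notin E(G)$, the forced edge $xz$) appears in your sketch, and without it the claim ``any such triangle encloses at most one vertex'' is unsupported. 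Finally, the branch of non-contractibility in which both $x$ and $y$ are external is mentioned but never handled; it is easy (an obstructing $5$-cycle would strictly enclose $x$ or $y$, contradicting externality), but it should be said.
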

\begin{proof}
Let $x$ and $y$ be the common neighbors of $u$ and $v$, and let $u'$ and $v'$ be the neighbors of $u$ and $v$, respectively, not in $\{u,v,x,y\}$.
We can assume that the path $u'uv$ is not $1$-contractible, and in particular there exists a path of length at most three
between $u'$ and $v$ in $G-\{u,x,y\}$.  The vertices $u'$ and $v'$ are non-adjacent, since $G$ does not contain separating triangles.
Therefore, $u'$ and $v'$ have a common neighbor $z\not\in \{x,y\}$.

By symmetry, we can assume that the open disk $\Lambda_1$ bounded by the 5-cycle $u'uvv'z$ contains $x$; and in particular, $x$ is an internal vertex.
Note that since $G$ does not contain non-facial triangles, $z$ is not adjacent to $y$.  
Let $z'$ be a common neighbor of $u'$ and $v'$ such that the open disk $\Lambda_2$ bounded by the 5-cycle $u'uvv'z'$ contains the face bounded by the triangle $uvy$;
note that $z'=y$ is possible.  Subject to these properties, choose the vertices $z$ and $z'$ so that the disks $\Lambda_1$ and $\Lambda_2$ are as large as possible.

We can assume that the bidiamond $\{u,v\}$ is not contractible, and thus there exists a path $R$ of length at most three from $x$ to $y$ in $G-\{u',u,v,v'\}$.
By planarity, $R$ intersects the path $u'zv'$, and since $zy\not\in E(G)$, we conclude that $R=xzy'y$ for a vertex $y'$.
Since all triangles is $G$ bound faces and $xz\in E(G)$, note that $x$ is the only vertex contained in $\Lambda_1$.
Moreover, observe that either $z'=y$ or $z'=y'$, and thus only the vertex $y$ can be contained in $\Lambda_2$.
By the maximality of $\Lambda_1$ and $\Lambda_2$, it follows that only $x$ and possibly $y$ is triangle-isolated in $G/u'uv$,
and thus the path $u'uv$ is $1$-contractible.
\end{proof}

We also need the following observations.
\begin{observation}\label{obs-pre3}
Let $G$ be a plane graph and let $P$ be a path of length at most two contained in the boundary of a face of $G$.
Then every precoloring of $G[V(P)]$ extends to a 4-coloring of $G$.
\end{observation}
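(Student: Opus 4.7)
The plan is to reduce to the Four Color Theorem by a case analysis on the equivalence structure of the precoloring $\psi$ on $V(P)$, using the face $f$ whose boundary contains $P$ as a "safe zone" in which one can modify $G$ without destroying planarity.

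First I would observe that $\psi$ is a proper coloring of $G[V(P)]$, so if $|V(P)|\le 2$, or if $|V(P)|=3$ and $\psi$ assigns three distinct colors to $V(P)$, then $\psi$ is a rainbow precoloring on $V(P)$. In that situation I would form a plane supergraph $G^+$ of $G$ by adding, inside the face $f$, all missing edges among $V(P)$ (at most the single edge between the endpoints of $P$). Since $V(P)$ lies on one face, these edges can be drawn without crossings, so $G^+$ is planar. By the Four Color Theorem $G^+$ has a 4-coloring $\varphi^+$ which necessarily assigns pairwise distinct colors to $V(P)$. Composing $\varphi^+$ with a permutation of the four colors matching $\varphi^+|_{V(P)}$ to $\psi$ gives the desired extension.

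The remaining case is $|V(P)|=3$, say $P=uvw$, with $\psi(u)=\psi(w)$. Here $uw\notin E(G)$ (else $G[V(P)]$ would be a triangle and $\psi$ could not repeat a color). Since $u$ and $w$ both lie on the boundary of $f$, I would identify $u$ and $w$ into a single vertex $z$ drawn inside $f$; the resulting multigraph is planar, and replacing each bundle of parallel edges by a single edge yields a simple plane graph $G'$. By the Four Color Theorem, $G'$ has a 4-coloring $\varphi'$. Reinterpreting $\varphi'$ on $V(G)$ by setting $\varphi(u)=\varphi(w)=\varphi'(z)$ (and $\varphi(x)=\varphi'(x)$ for all other vertices) gives a proper 4-coloring of $G$ in which $u$ and $w$ receive the same color, while $v$, adjacent to both, gets a different one. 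A permutation of colors again matches this to $\psi$.

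The only delicate point, and thus the place where I would be careful, is the simplicity/planarity of the contracted graph $G'$ in the second case: common neighbors of $u$ and $w$ (other than $v$) create parallel edges, and I would argue that merging them does not affect 4-colorability and that the simplified graph remains planar because the identification happens across a face. The other potential concern, that the added edges in the first case create a non-simple or non-planar graph, is ruled out because at most one edge is added and it is drawn strictly inside $f$. No part of the argument requires anything beyond the Four Color Theorem and the freedom to manipulate $G$ inside the face $f$.
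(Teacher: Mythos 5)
Your proof is correct and takes essentially the same approach as the paper: add the chord $v_1v_3$ inside the face (and contract it when the endpoints receive the same precolor), apply the Four Color Theorem, and permute colors. The paper merely phrases the two cases uniformly — always add the edge first, contract if needed — rather than splitting the argument up front as you do.
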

\begin{proof}
We can assume that $P$ has length two, as otherwise we can extend $P$ by another vertex; let $P=v_1v_2v_3$.
Let $\psi$ be any precoloring of $G[V(P)]$.  Add the edge $v_1v_3$ to $G$, and if $\psi(v_1)=\psi(v_3)$, contract this
edge.  The resulting planar graph $G'$ is $4$-colorable by the Four Color Theorem, and we can permute the colors in this
4-coloring to match $\psi$.  Thus, the 4-coloring of $G'$ corresponds to a 4-coloring of $G$ extending $\psi$.
\end{proof}

\begin{observation}\label{obs-pre4}
Let $G$ be a plane graph and let $P$ be a path of length three contained in the boundary of a face of $G$.
Then every precoloring of $G[V(P)]$ giving the ends of $P$ the same color extends to a 4-coloring of $G$.
\end{observation}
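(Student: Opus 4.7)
The plan is to mimic the proof of Observation~\ref{obs-pre3}, but using identification of the (non-adjacent) endpoints of $P$ in place of the addition of a chord. Write $P=v_1v_2v_3v_4$ and set $a=\psi(v_1)=\psi(v_4)$. Since $\psi$ is a proper coloring of $G[V(P)]$, the vertices $v_1$ and $v_4$ are non-adjacent in $G$, and the three colors $a$, $\psi(v_2)$, $\psi(v_3)$ are pairwise distinct. Because $v_1$ and $v_4$ both lie on the boundary of a common face $f$, they can be identified in the plane by collapsing an arc through $f$ to a single vertex $v_{14}$; this produces a plane multigraph with no loop (since $v_1v_4\notin E(G)$), and after suppressing the parallel edges coming from common neighbors of $v_1$ and $v_4$ I obtain a simple plane graph $G'$.

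The key structural observation is that in $G'$ the vertices $v_{14}$, $v_2$, $v_3$ induce a triangle: the edges $v_1v_2$ and $v_3v_4$ of $G$ become edges $v_{14}v_2$ and $v_{14}v_3$ of $G'$, while $v_2v_3$ is preserved. By the Four Color Theorem, $G'$ admits a 4-coloring $\varphi$, and any such $\varphi$ assigns three distinct colors to this triangle. Permuting the four color classes of $\varphi$, I can arrange $\varphi(v_{14})=a$, $\varphi(v_2)=\psi(v_2)$, and $\varphi(v_3)=\psi(v_3)$.

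Lifting $\varphi$ back to $G$ by keeping $\varphi$ on $V(G)\setminus\{v_1,v_4\}$ and assigning the color $a$ to both $v_1$ and $v_4$ yields a proper 4-coloring of $G$ extending $\psi$: every neighbor of $v_1$ or $v_4$ in $G$ becomes a neighbor of $v_{14}$ in $G'$ and is therefore colored differently from $a$. I do not expect any real obstacle here. The only detail requiring careful attention is checking that the identification of $v_1$ and $v_4$ can be realized planarly, but this is immediate from the assumption that both vertices lie on the boundary of $f$, together with $v_1v_4\notin E(G)$ ruling out the creation of a loop.
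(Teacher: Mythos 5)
Your proof is correct and follows essentially the same route as the paper: identify the non-adjacent ends $v_1$ and $v_4$ through the face to turn $P$ into a triangle, apply the Four Color Theorem to the resulting planar graph, permute colors to match the precoloring, and lift back. The extra details you verify (planarity of the identification, distinctness of the three colors) are exactly the implicit checks in the paper's shorter argument.
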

\begin{proof}
Let $P=v_1v_2v_3v_4$ and let $\psi$ be any precoloring of $G[V(P)]$ such that $\psi(v_1)=\psi(v_4)$.
Since $\psi$ exists, we have $v_1v_4\not\in E(G)$.
Identify the vertices $v_1$ and $v_4$, turning $P$ into a triangle.  The resulting planar graph $G'$ is $4$-colorable by the Four Color Theorem, and we can permute the colors in this
4-coloring to match $\psi$.  Thus, the 4-coloring of $G'$ corresponds to a 4-coloring of $G$ extending $\psi$.
\end{proof}

We say that a canvas is \emph{trivial} if it has at most one internal vertex.  A restrictive weak $4$-candidate $G$ is \emph{deepest}
if it is non-trivial and no proper subcanvas of $G$ is a non-trivial restrictive weak $4$-candidate.
Lemma~\ref{lemma-redu-bidiamond} makes the following observation useful even when $G$ is a subcanvas of a larger weak $4$-candidate.
\begin{lemma}\label{lemma-noop}
Let $G$ be a deepest restrictive weak $4$-candidate with the outer face bounded by the cycle $K=x_1x_2x_3x_4$.
If $x_1$ and $x_3$ have a common neighbor $v\not\in \{x_2,x_4\}$, then $G$ contains a bidiamond.
\end{lemma}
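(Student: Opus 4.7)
\textit{Proof plan.}
The plan is to exploit the deepest property of $G$ by examining the two subcanvases induced by the $4$-cycles $C_1=x_1vx_3x_2$ and $C_2=x_1vx_3x_4$. For $i\in\{2,4\}$, let $H_i$ be the subcanvas of $G$ drawn in the closed disk bounded by $C_i$. Each $H_i$ is a proper weak $4$-candidate of $G$, so by deepestness each is either trivial or non-restrictive. The overall goal is to show that at least one of $H_2$ and $H_4$ is trivial (via a coloring-gluing contradiction) and then to read off a bidiamond from the pinned-down local structure.

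First I would argue that $H_2$ and $H_4$ cannot both be non-restrictive. If they were, each would admit a rainbow $4$-coloring on its boundary cycle together with $4$-colorings of both diagonal types. A short case analysis on the kind of $G$ (rainbow- or diagonal-forbidding on $K$) shows that one can always choose compatible $4$-colorings $\varphi_2$ of $H_2$ and $\varphi_4$ of $H_4$ that agree on the shared path $x_1vx_3$ (permuting colors if necessary) and whose union is a $4$-coloring of $G$ whose restriction to $K$ has the type forbidden by $G$, contradicting $G$ being restrictive. So we may assume $H_2$ is trivial. Triviality, together with the requirements that all internal faces of $H_2$ are triangles and that $G$ has no non-facial triangles, pins down its structure: either $H_2$ consists of $C_1$ together with the chord $vx_2$ (the alternative chord $x_1x_3$ is ruled out, since the triangle $x_1x_3x_4$ of $G$ would then enclose $v$), or $H_2$ has a single internal vertex $w$ adjacent to all of $x_1,v,x_3,x_2$ and hence $\deg_G w=4$.

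In each of the two configurations above, $H_2$ admits only a restricted palette of $4$-coloring types on $C_1$, and repeating the gluing argument with this restricted palette shows that the restrictiveness of $G$ now forces $H_4$ to be restrictive as well, hence trivial by deepestness. The same structural analysis then applies to $H_4$. Combining the possibilities for the pair $(H_2,H_4)$, and ruling out the degenerate sub-case in which both sub-canvases consist only of their boundary cycle with a chord (which would make $G$ the wheel $W_4$, a trivial canvas, contradicting $G$ being deepest), it follows that $v$ has degree exactly four in $G$ and is adjacent to an internal degree-four vertex $w^*$ coming from whichever of $H_2$ and $H_4$ contains an internal vertex. The common neighbors of $v$ and $w^*$ are $x_1$ and $x_3$, and their remaining neighbors are $x_2$ and $x_4$ in some order, so $\{v,w^*\}$ is the desired bidiamond. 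The main obstacle is the coloring-gluing case analysis in the first two steps, which requires careful bookkeeping of how $4$-coloring types on $C_1$ and $C_2$ combine at the shared path $x_1vx_3$ into a type on $K$, together with matching colors by permutation so as to cover all sub-cases uniformly.
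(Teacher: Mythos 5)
Your proposal is correct, and while it uses the same decomposition as the paper (splitting $G$ along the path $x_1vx_3$ into the two subcanvases containing $x_2$ and $x_4$, showing both are trivial, and reading off the bidiamond from the resulting picture), the middle of the argument runs differently. The paper rules out a non-trivial piece in a single stroke: it takes the non-extendable precoloring of $K$, extends its restriction to the length-two path on the other piece via Observation~\ref{obs-pre3}, deduces that the induced precoloring of the non-trivial (hence non-restrictive) piece must be bichromatic, and then invokes Corollary~\ref{cor-allbutone} to recolor $v$ with a color in $\{3,4\}$ on the other piece, turning that precoloring diagonal and gluing -- so it never needs to know anything about the structure of the second piece, at the price of relying on the Four-Color-Theorem-based extension facts. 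You instead first exclude ``both pieces non-restrictive'' by a direct type-matching construction over the three possible forbidden kinds of $G$, and then exploit the explicit structure of the trivial piece (the $4$-cycle with chord $vx_2$, or a single internal degree-four vertex) to exclude the other piece being non-restrictive; this avoids Observation~\ref{obs-pre3} and Corollary~\ref{cor-allbutone} entirely but costs a longer case analysis (two shapes of the trivial piece times three forbidden types, plus the initial symmetric case), which you defer as bookkeeping -- I checked that all of these cases do close, including the chord subtleties: $x_1x_3\in E(G)$ is impossible since $G$ is non-trivial, and a chord $vx_2$ or $vx_4$ makes the corresponding piece trivial and restrictive, so it never occurs in your non-restrictive cases. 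One harmless slip at the end: when both pieces contain an internal vertex, the remaining neighbors of $v$ are those two internal vertices rather than ``$x_2$ and $x_4$ in some order''; this does not affect the conclusion, since a bidiamond only requires two adjacent internal vertices of degree four.
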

\begin{proof}
For $i\in \{1,2\}$, let $K_i$ be the 4-cycle $x_{2i-1}x_2vx_4$ and let $G_i$ be the subcanvas of $G$ bounded by $K_i$.
Since $G$ is non-trivial and all triangles in $G$ bound faces, $G$ contains at most one of the edges $x_1v$ and $x_3v$.
We claim that both $G_1$ and $G_2$ are trivial, and thus $G$ contains a bidiamond.  For contradiction, suppose that say $G_1$ is non-trivial.
Clearly $G_1$ is a weak $4$-candidate, and since $G$ is deepest, $G_1$ is not restrictive.

Let $\varphi$ be a coloring of $K$ that does not extend to a 4-coloring of $G$, such that the type of $\varphi$ is rainbow or diagonal.
By Observation~\ref{obs-pre3}, the restriction of $\varphi$ to the path $x_2x_3x_4$ extends to a 4-coloring $\varphi_2$ of $G_2$.
Let $\psi_1$ be the 4-coloring of $K_1$ matching $\varphi$ on $x_2$, $x_1$, and $x_4$, and matching $\varphi_2$ on $v$.  Since $\varphi$ does not
extend to a 4-coloring of $G$, the coloring $\psi_1$ does not extend to a 4-coloring of $G_1$.  Since $G_1$ is not restrictive,
we can assume that $\varphi(x_1)=\psi_1(v)=1$ and $\varphi(x_2)=\varphi(x_4)=2$.  Since the type of $\varphi$ is not bichromatic,
we can assume that $\varphi(x_3)=3$.

Observe that Corollary~\ref{cor-allbutone} implies that there exists a color $c\in \{3,4\}$
such that $G_2$ has a 4-coloring $\varphi'_2$ matching $\varphi$ on the path $x_2x_3x_4$ and such that $\varphi'_2(v)=c$.
Since $G_1$ is not restrictive, it has has a 4-coloring $\varphi_1$ matching $\varphi$ on the path $x_2x_1x_4$ and such that $\varphi_1(v)=c$.
The combination of $\varphi'_2$ and $\varphi_1$ is a 4-coloring of $G$ extending $\varphi$, which is a contradiction.
\end{proof}

\begin{figure}
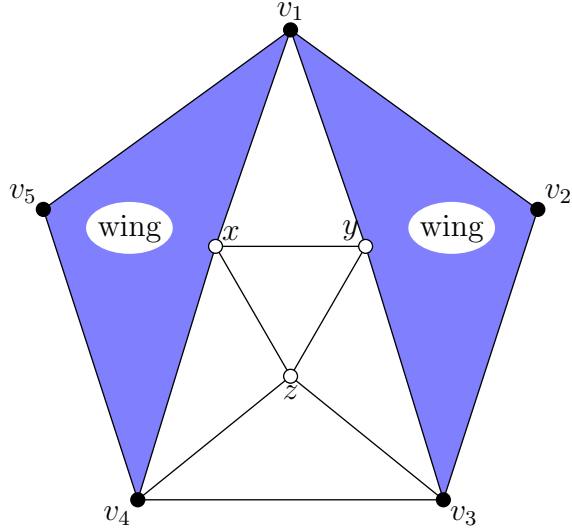

\begin{center}
\asyinclude[width=0.6\textwidth]{fig-trouble.asy}
\end{center}
\caption{A troublesome pentagon.}\label{fig-trouble}
\end{figure}

Next, we need to consider properties of two special configurations.
Let $G$ be a weak $4$-candidate.  A \emph{troublesome pentagon} in $G$ is a subcanvas $H$ of $G$ with the outer face bounded by
a 5-cycle $Q=v_1\ldots v_5$ containing a triangle $xyz$ of internal vertices such that $x$ is adjacent to $v_1$ and $v_4$,
$y$ is adjacent to $v_1$ and $v_3$, and $z$ is adjacent to $v_3$ and $v_4$; see Figure~\ref{fig-trouble}.  The vertex $z$ is the \emph{center} of the pentagon.
The subcanvases of $H$
bounded by the cycles $v_1xv_4v_5$ and $v_1v_2v_3y$ are the \emph{wings} of the troublesome pentagon.
We say that a vertex $w$ of $G$ is \emph{potentially useful} if there exists a restrictive induced subgraph $F$ of $G$
with the same outer face as $G$ such that $F-w$ is not restrictive.
\begin{lemma}\label{lemma-trouble}
Let $G$ be a restrictive weak $4$-candidate and let $H$ be a troublesome pentagon in $G$.
If neither of the wings of $H$ is restrictive, then the center of $H$ is not potentially useful.
\end{lemma}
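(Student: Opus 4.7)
The plan is to fix a restrictive induced subgraph $F$ of $G$ with the same outer face as $G$ and to show that $F-z$ is restrictive; if $z\notin V(F)$ this is immediate, so I assume $z\in V(F)$. Arguing by contrapositive, I suppose $F-z$ is not restrictive, fix any rainbow or diagonal precoloring $\varphi$ of the common outer face, and take some extension $\psi$ of $\varphi$ to a 4-coloring of $F-z$ guaranteed by the hypothesis. The goal is to build an extension of $\varphi$ to $F$, contradicting $F$'s restrictiveness. If $\psi$ uses at most three colors on $N_F(z)$ then $\psi$ extends to $F$ directly, so I may assume $\{x,y,v_3,v_4\}\subseteq V(F)$ and that $\psi$ uses all four colors on these; permuting colors, let $\psi(v_3)=3$, $\psi(v_4)=4$, $\psi(x)=1$, $\psi(y)=2$, and note that if $v_1\in V(F)$ then $\psi(v_1)\in\{3,4\}$ because $v_1$ is adjacent to both $x$ and $y$.

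The main step is to modify $\psi$ locally so that two of $\{x,y,v_3,v_4\}$ share a color. Because $v_3$ and $v_4$ are pinned to $3$ and $4$ and $x,y$ are adjacent to $v_4,v_3$ respectively, the only viable moves are $\psi'(x)=3$ (changing $x$ from $1$ to $3$) or $\psi'(y)=4$ (changing $y$ from $2$ to $4$). If $v_1\notin V(F)$ or $\psi(v_1)=3$, I would aim for $\psi'(y)=4$: the target precoloring of $W_2$'s outer 4-cycle is $v_1\mapsto 3$, $v_3\mapsto 3$, $y\mapsto 4$, with $v_2\mapsto\psi(v_2)$ if $v_2\in V(F)$ and any valid color otherwise. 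Since $v_1$ and $v_3$ receive the same color, this is a diagonal precoloring; since $W_2$ is not restrictive, it extends to a 4-coloring of $W_2$, which restricted to $V(W_2)\cap V(F)$ and pasted together with $\psi$ elsewhere gives a 4-coloring of $F-z$ that I then complete by setting $\psi'(z)=2$. The case $\psi(v_1)=4$ is symmetric, invoking $W_1$ to push $\psi(x)$ to $3$ and completing with $\psi'(z)=1$.

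The hardest part will be the bichromatic degeneracy: if $v_2\in V(F)$ with $\psi(v_2)=4$, then the target precoloring of $W_2$ has $v_1=v_3=3$ and $v_2=y=4$, which is bichromatic rather than diagonal, and the non-restrictiveness of $W_2$ says nothing about bichromatic precolorings; the symmetric obstruction arises for $W_1$ when $\psi(v_5)=3$. To dispose of this, I expect to switch to the other wing when possible, or apply a Kempe-chain swap in the spirit of Observation~\ref{obs-adjacent} to replace $\psi$ by an extension of $\varphi$ with a more convenient value at $v_2$ (or $v_5$), or else exploit the flexibility in choosing $\psi$ from among all extensions of $\varphi$ to $F-z$ in the first place. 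The case distinctions arising from possible absence of $v_1,v_2,v_5$ from $V(F)$ mostly simplify the argument, since vertices outside $F$ impose no compatibility constraint on the wing recoloring.
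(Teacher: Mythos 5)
Your easy cases are fine and essentially reproduce, in contrapositive form, the first half of the paper's argument (the paper phrases it as a claim $(\star)$ forcing diagonal coincidences on the pentagon boundary, after cutting out the interior of the pentagon). The wing recolorings you describe are legitimate: the wings are full subcanvases of $G$, so a non-restrictive wing lets you realize any non-bichromatic precoloring of its boundary, and restricting to $V(F)$ pastes correctly with $\psi$; note also that the hypothesis that the wings are not restrictive is what rules out chords on the wing diagonals, so your target precolorings are proper. But the case you flag as the ``bichromatic degeneracy'' is not a loose end to be mopped up later -- it is the entire content of the lemma, and none of the three escape routes you sketch survives scrutiny. ``Switch to the other wing'' is unavailable exactly when you need it: with $\psi(v_3)=3$, $\psi(v_4)=4$, $\psi(x)=1$, $\psi(y)=2$ and $\psi(v_1)=3$, the $H_1$ move $x\mapsto 3$ is killed outright by the edge $xv_1$, so if moreover $\psi(v_2)=4$ you have no wing move at all. ``Choose a better extension $\psi$ of $\varphi$ in the first place'' begs the question, since you have no handle on which extensions of $\varphi$ to $F-z$ exist. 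And a Kempe swap ``in the spirit of Observation~\ref{obs-adjacent}'' to change the color of $v_2$ (or $v_5$) is only legal if the relevant two-colored component through $v_2$ avoids the precolored outer $4$-cycle $K$; whether such a component exists is precisely what must be proved, and in the bad configuration it can fail.

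This is where the paper's proof does real work that your proposal omits: it passes to the graph $F''$ obtained by deleting everything inside the pentagon, shows every extension of $\psi$ to $F''$ satisfies the diagonal constraints $(\star)$, and then runs a global Kempe-chain analysis in the annulus between the pentagon and $K$. The punchline is not that a convenient swap always exists, but that if every swap violating $(\star)$ is blocked, then all the relevant $C_{1,3}$, $C_{1,4}$ (and, by planarity, $C_{2,4}$) components emanating from the pentagon must reach $K$ in a vertex-disjoint, planarity-constrained way, and this pins down the colors on $K$ so tightly that the outer precoloring $\psi$ would have to be bichromatic -- contradicting that it has rainbow or diagonal type. In other words, the obstruction you isolated is genuinely not locally repairable; it is defeated only by exploiting the type of the outer precoloring through Kempe chains that interact with $K$. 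Without that global step your argument does not go through.
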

\begin{proof}
Let $K$ be the 4-cycle bounding the outer face of $G$.
Let $Q=v_1\ldots v_5$ and $xyz$ be as in the definition of a troublesome pentagon and let $H_1$ and $H_2$
be the wings, where $v_2\in V(H_2)$.  Suppose for a contradiction that $z$ is potentially useful, and thus there exists a 
restrictive induced subgraph $F$ of $G$ with the outer face bounded by $K$ such that $F-z$ is not restrictive.
Let $\psi$ be a precoloring of $K$ (of rainbow or diagonal type) that does not extend to a 4-coloring of $F$,
but does extend to a 4-coloring of $F-z$.

Let $F'=(F-z)\cup Q$. Note that $F'$ is obtained from $F-z$ by adding the vertices of $V(Q)\setminus V(F)$
and the incident edges of $Q$, and thus $F-z$ is an induced subgraph of $F'$ and all vertices of $V(F')\setminus V(F)$
have degree two in $F'$.  Hence, $\psi$ also extends to a 4-coloring of $F'$.
Let $F''$ be the subgraph of $F'$ obtained by deleting the vertices and edges drawn in the open disk bounded by $Q$.

Consider any 4-coloring $\varphi$ of $F''$ extending $\psi$.
Since $\psi$ does not extend to a 4-coloring of $F$, the coloring $\varphi$ does not extend to a 4-coloring of $H$.
We claim that this implies the following:
\begin{itemize}
\item[$(\star)$] Every 4-coloring $\varphi$ of $F''$ extending $\psi$ satisfies either
$\varphi(v_1)=\varphi(v_4)$ and $\varphi(v_3)=\varphi(v_5)$, or $\varphi(v_1)=\varphi(v_3)$ and $\varphi(v_2)=\varphi(v_4)$.
\end{itemize}
Indeed, if $\varphi(v_1)\not\in\{\varphi(v_3),\varphi(v_4)\}$, then we could give $x$ the color $\varphi(v_3)$, $y$ the color $\varphi(v_4)$,
extend the coloring to $z$, and extend the coloring to $H_1$ and $H_2$ using the assumption that they are not restrictive, which is a contradiction.
Hence, suppose that $\varphi(v_1)=\varphi(v_4)$ (the case $\varphi(v_1)=\varphi(v_3)$ is symmetric).  Then we can color $x$ by $\varphi(v_3)$
and extend the 4-coloring to $z$, $y$, and $H_2$.  Since $\varphi$ does not extend to $H$, this coloring does not extend to $H_1$.
It follows that the subcanvas $H_1$ is bichromatic-forbidding and the coloring of its boundary 4-cycle given by $\varphi$ and the assignment of
the color $\varphi(v_3)$ to $x$ uses only two colors, i.e., $\varphi(v_5)=\varphi(v_3)$.  This confirms the conclusion of $(\star)$.

Since $\psi$ extends to a 4-coloring of $F-z$, it also extends to a 4-coloring $\varphi$ of $F''$.
By $(\star)$, we can assume that $\varphi(v_1)=\varphi(v_4)=1$ and $\varphi(v_3)=\varphi(v_5)=2$.
For distinct colors $a,b\in\{1,\ldots,4\}$, let $C_{a,b}$ be the subgraph of $F''$ induced by the vertices whose colors according to $\varphi$
are $a$ or $b$.

If $v_1$ and $v_4$ are in the same component of $C_{1,3}$, then by planarity, there exists $i\in \{3,5\}$
such that the component $C_1$ of $C_{2,4}$ containing $v_i$ does not contain $v_{8-i}$ and does not intersect $K$.  We can
exchange the colors $2$ and $4$ on $C_1$ to obtain a 4-coloring of $F''$ violating $(\star)$, which is a contradiction.
Therefore, $v_1$ and $v_4$ are not in the same component of $C_{1,3}$, and similarly, they are not in the same component of $C_{1,4}$.

If there exists $i\in \{1,4\}$ such that the component $C_2$ of $C_{1,3}$ or $C_{1,4}$ containing $v_i$ is disjoint from $K$,
we can exchange the colors on $C_2$, again obtaining a 4-coloring of $F''$ violating $(\star)$.  Thus, we can assume that all such components
intersect $K$.  Since $v_1$ and $v_4$ are not in the same component of $C_{1,3}$ or $C_{1,4}$, for both $a\in \{3,4\}$,
there exist distinct non-adjacent vertices $s_a, t_a\in V(K)$ such that $\psi(s_a),\psi(t_a)\in \{1,a\}$ and all
other vertices of $K$ have colors different from $1$ and $a$.

If $\{s_3,t_3\}=\{s_4,t_4\}$, this implies that $\psi(s_3)=\psi(t_3)=1$ and $\psi$ assigns the color $2$ to the remaining two vertices of $K$.
If $\{s_3,t_3\}\neq\{s_4,t_4\}$, then $\psi(s_3)=\psi(t_3)=3$ and $\psi(s_4)=\psi(t_4)=4$.  In either case, the type of $\psi$ were bichromatic,
which is a contradiction.
\end{proof}

\begin{figure}
\begin{center}
\asyinclude[width=0.9\textwidth]{fig-expansion.asy}
\end{center}
\caption{Configurations from Theorem~\ref{thm-redu-weak-cand} and the corresponding expansions.}\label{fig-expansion}
\end{figure}

Let $G$ be a weak $4$-candidate.  An \emph{amoeba} in $G$ is a subcanvas $H$ of $G$ with the outer face bounded by
a 4-cycle $Q=v_1\ldots v_4$, where $H$ contains a triangle $wyz$ of internal vertices,
$v_1$ is adjacent to $w$ and $y$, $v_2$ is adjacent to $w$ and $z$, and $v_3$ is adjacent to $z$, and moreover
$y$ and $v_3$ have a common neighbor $x\neq z$, where either $x$ is an internal vertex of $H$ or $x=v_4$; see bottom right of Figure~\ref{fig-expansion}.
The subcanvas of $G$ bounded by the 4-cycle $C=xyzv_3$ is the \emph{eye} of the amoeba, and
the subgraph of $G$ resulting from deleting all vertices and edges drawn in the open disk bounded by $C$
is said to be obtained from $G$ by \emph{blinding} the amoeba.  The amoeba is \emph{proper} if its eye has at least one internal vertex.
The eye of $H$ is irrelevant for the restrictive precoloring extension in $G$, in the following sense.

\begin{lemma}\label{lemma-amoeba}
Let $G$ be a restrictive weak $4$-candidate, let $\psi$ be a precoloring of the 4-cycle $K$ bounding the outer face of $G$
that does not extend to a precoloring of $G$, and let $H$ be an amoeba in $G$.  Then $\psi$ does not extend to a 4-coloring
of the subgraph $G'$ of $G$ obtained by blinding the amoeba $H$.
\end{lemma}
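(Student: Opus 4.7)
The plan is to prove the contrapositive: given any $4$-coloring $\varphi'$ of $G'$ extending $\psi$, construct a $4$-coloring of $G$ that also extends $\psi$, contradicting the hypothesis on $\psi$. Since $G'$ is obtained from $G$ by deleting the interior of the eye (the open disk bounded by $C=xyzv_3$), this reduces to extending $\varphi'|_C$ to a $4$-coloring of the eye.

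The first step is to record the constraints imposed by the amoeba. Because $wyz\subseteq G'$ is a triangle, the colors $a:=\varphi'(w)$, $b:=\varphi'(y)$, $c:=\varphi'(z)$ are pairwise distinct; letting $d$ denote the remaining color, the adjacencies $v_1\sim w,y$, $v_2\sim w,z$, $v_3\sim z$, $x\sim y,v_3$ in $G'$ force $\varphi'(v_3)\in\{a,b,d\}$ and $\varphi'(x)\in\{a,c,d\}\setminus\{\varphi'(v_3)\}$, so the type of $\varphi'|_C$ is determined by the pair $(\varphi'(x),\varphi'(v_3))$. Next I would apply Corollary~\ref{cor-allbutone} to the eye (handling separately the degenerate cases in which $C$ has a chord inside the eye, which force the eye to be essentially trivial): the eye admits $4$-colorings whose restrictions to $C$ realize at least three of the four possible types, so at most one type $\approx^*$ is forbidden. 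If $\varphi'|_C$ is not of type $\approx^*$, then a $4$-coloring of the eye of the same type can be permuted to match $\varphi'|_C$ exactly, yielding the desired extension.

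The substantive step is therefore to show that among the $4$-colorings of $G'$ extending $\psi$ there is always one whose restriction to $C$ avoids $\approx^*$. My approach is a Kempe-chain modification in the spirit of Lemma~\ref{obs-adjacent}: starting from a $\varphi'$ with $\varphi'|_C$ of type $\approx^*$, locate a bichromatic component of $G'$ meeting $C$ in appropriate vertices and swap its two colors to produce a new $4$-coloring $\varphi''$ of $G'$ whose restriction to $C$ has type adjacent to $\approx^*$ in the type graph $Q$. The main obstacle is ensuring that such a swap does not alter $\psi|_K$; this is handled by the standard planarity argument that prevents two disjoint monochromatic chains from coexisting, exploiting the separation in the plane between $C$ and $K$ provided by the outer cycle $Q$ of the amoeba together with the tight triangle $wyz$. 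A case analysis on $\approx^*\in\{\approx_r,\approx_1,\approx_2,\approx_b\}$, with the bichromatic case requiring extra attention and using that $\psi$ is known to be of rainbow or diagonal type, then completes the argument.
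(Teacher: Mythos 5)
Your reduction of the problem to ``extend $\varphi'|_C$ into the eye'' is fine, but the step you call substantive is exactly where the argument breaks, and the justification you sketch for it does not exist. When $\varphi'|_C$ has the one type forbidden by the eye, you propose a Kempe swap in $G'$ that changes the type on $C$ without disturbing $\psi$ on $K$, ``handled by the standard planarity argument \ldots exploiting the separation in the plane between $C$ and $K$.'' There is no such separation. The crossing argument behind Lemma~\ref{obs-adjacent} only shows that two chains joining the two interleaved pairs of vertices of the \emph{same} $4$-cycle cannot coexist; it guarantees that some swap changes the type on $C$, but it says nothing about the swapped component avoiding $K$. The cycle $Q=v_1v_2v_3v_4$ and the triangle $wyz$ are colored vertices, not barriers: a two-colored component containing a vertex of $C$ can pass straight through them and reach $K$, and if it does, the swap destroys $\psi$. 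Protecting a second precolored cycle during Kempe exchanges is a genuinely delicate matter (compare the effort spent in Lemma~\ref{lemma-holefour}, where it also needs the precoloring not to be bichromatic), and nothing in your setup supplies it here. A secondary issue: your ``degenerate'' chord case is not trivial in your framework. If $xz$ (or $yv_3$) is an edge drawn inside the eye, blinding deletes it, $\varphi'$ may give its endpoints the same color, and the eye then forbids \emph{two} types, so Corollary~\ref{cor-allbutone} no longer confines you to a single bad type.

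The missing idea is that you need not keep $\varphi'$ fixed on $G'\setminus C$ at all: every vertex of the amoeba other than $v_1,\ldots,v_4$ is internal in $G$, hence off $K$, so you are free to recolor the whole interior of $H$, not just the eye. The paper's proof does exactly this, splitting on whether $\varphi(v_1)=\varphi(v_3)$. If $\varphi(v_1)\neq\varphi(v_3)$, uncolor the interior of $H$, recolor $z$ with $\varphi(v_1)$, extend across the subcanvas bounded by the $5$-cycle $zv_3v_4v_1y$ via Observation~\ref{obs-pre4} (the path $zv_3v_4v_1$ has equal-colored ends), and finish at $w$ because two of its four neighbors now share a color. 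If $\varphi(v_1)=\varphi(v_3)$, the degree-four vertex $w$ forces $\varphi(y)=\varphi(v_2)$; uncolor $z$ and $w$, extend into the eye by Observation~\ref{obs-pre3} along the path $yxv_3$, and again finish at $w$. This yields an extension of $\psi$ to $G$ directly from any hypothetical $\varphi'$, with no Kempe chains and no type analysis of the eye, which is precisely what your approach lacks a way to do.
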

\begin{proof}
Let $Q=v_1\ldots v_4$, $w$, $x$, $y$, and $z$ be as in the definition of an amoeba.  Suppose for a contradiction that $\psi$
extends to a 4-coloring $\varphi$ of $G'$.

If $\varphi(v_1)\neq \varphi(v_3)$, then we let $\varphi_0$ be the restriction of $\varphi$ to the vertices of $G$ which are not internal
in $H$ and we show that $\varphi_0$ can be extended to a 4-coloring of $G$, obtaining a contradiction: We extend $\varphi_0$ to $z$ by letting
$\varphi_0(z)=\varphi(v_1)$.  By Observation~\ref{obs-pre4}, $\varphi_0$ further extends to the subcanvas of $G$ bounded by the 5-cycle
$zv_3v_4v_1y$.  Since $\varphi_0(v_1)=\varphi_0(z)$, we can finally extend this 4-coloring to $w$ as well.

Therefore, we have $\varphi(v_1)=\varphi(v_3)$, and thus $\varphi(z)\neq \varphi(v_1)$, and consequently $\varphi(y)=\varphi(v_2)$.
Let $\varphi_1$ be the restriction of $\varphi$ obtained by uncoloring the vertices $z$ and $w$.  By Observation~\ref{obs-pre3},
$\varphi_1$ can be extended to the eye of $H$, and since $\varphi_1(y)=\varphi_1(v_2)$, we can finally extend $\varphi_1$ to $w$ as well.
This is again a contradiction.
\end{proof}

Finally, we are ready to state the result showing that every non-trivial restrictive weak $4$-candidate
contains a reducible configuration.

\begin{theorem}\label{thm-redu-weak-cand}
If Conjecture~\ref{conj-main-strong} holds, then
every non-trivial restrictive weak $4$-candidate $G$ contains a contractible bidiamond, a $1$-contractible path, or a proper amoeba.
\end{theorem}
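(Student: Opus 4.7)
The strategy is to apply Lemma~\ref{lemma-wheel2} to $G$ (invoking Conjecture~\ref{conj-main-strong}) to produce an internal vertex $v$ of $G$ of degree four whose four incident faces are triangles, and then case-analyze the structure around $v$. Writing $u_1,u_2,u_3,u_4$ for the neighbors of $v$ in cyclic order, the triangular faces give $u_iu_{i+1}\in E(G)$ (indices mod~$4$), while the hypothesis that every triangle of $G$ bounds a face forces $u_1u_3,u_2u_4\notin E(G)$. Hence $C_v:=u_1u_2u_3u_4$ is a $4$-cycle whose open interior in $G$ contains only $v$.

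Suppose first that some $u_i$ is an internal vertex of degree four. Then $\{v,u_i\}$ is a bidiamond whose common neighbors are exactly $u_{i-1}$ and $u_{i+1}$. If this bidiamond is contractible, we are done. Otherwise there exists a path of length two or three between $u_{i-1}$ and $u_{i+1}$ in $G$ avoiding $v$, $u_i$, $u_{i+2}$, and the fourth neighbor of $u_i$; adapting the analysis of Lemma~\ref{lemma-redu-bidiamond}, whose argument uses only the facial-triangle property and the local degree-four structure at the bidiamond and not the full triangulation assumption, turns such a path into either a $1$-contractible path in $G$ or a proper amoeba.

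In the complementary case, every $u_i$ is external in $G$ or has degree at least five. I plan to show that one of the paths $P_1 = u_1vu_3$, $P_2 = u_2vu_4$ is $1$-contractible. For each $P_j$, failure of the basic conditions can happen only because (a) both endpoints of $P_j$ lie on the outer $4$-cycle of $G$, or (b) the endpoints share a common neighbor $t \notin \{v,u_{j-1},u_{j+1}\}$, which by planarity must lie outside $C_v$. Beyond the basic conditions, $P_j$ fails to be $1$-contractible only when (c) some $5$-cycle through $P_j$ in $G$, drawn outside $C_v$ on one side of $P_j$, bounds a disk containing $v$ and at least one further vertex.

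If both $P_1$ and $P_2$ are obstructed, I extract a proper amoeba by matching the obstructions to the template of the bottom-right configuration of Figure~\ref{fig-expansion}: $v$ plays the role of the triangle vertex $w$, two consecutive $u_i$'s become the remaining triangle vertices $y,z$ via the triangular face at $v$, a type-(b) obstruction for the transverse path supplies the outer-cycle vertex $v_3$, and a type-(c) obstruction supplies an internal vertex of the eye $xyzv_3$ that makes the amoeba proper; type-(a) failures further pin down which $u_i$'s appear on the outer cycle of the amoeba. The main obstacle is this final assembly step: verifying that whenever no contractible bidiamond exists and both $P_1$ and $P_2$ fail to be $1$-contractible, the obstructions on the two sides of $C_v$ always combine into a single proper amoeba with the prescribed adjacencies and a non-trivial eye. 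I expect to handle this by a case analysis driven by the cyclic symmetries of $v$'s neighborhood, combined with planarity and the facial-triangle condition to rule out incompatible combinations.
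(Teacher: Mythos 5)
Your plan to find a degree-four vertex via Lemma~\ref{lemma-wheel2} and case-analyze the $1$-contractibility of the two paths through it matches the skeleton of the paper's proof, but the proposal has a genuine gap at exactly the step you flag as the ``main obstacle.'' When neither $P_1$ nor $P_2$ is $1$-contractible, the obstructing paths $R_1$ and $R_2$ do not always assemble into a proper amoeba: the paper shows that one combinatorially unavoidable outcome is a \emph{troublesome pentagon} with center $v$ (a configuration $v_1v_2z_2zz_1$ whose interior is a triangle of degree-four-style vertices), and ruling this out is \emph{not} a matter of planarity and the facial-triangle condition alone. It is handled by a precoloring argument, Lemma~\ref{lemma-trouble}, which says a troublesome pentagon with no restrictive wing has a center that is \emph{not potentially useful}. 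To invoke that lemma you need to know, in advance, that the degree-four vertex you chose is potentially useful, and this is precisely why the paper does \emph{not} apply Lemma~\ref{lemma-wheel2} to $G$ directly: it first passes to a minimal non-trivial restrictive subcanvas $H$ (so it is \emph{deepest}, which makes the wings trivial if restrictive, and makes Lemma~\ref{lemma-noop} available), and then to a minimal restrictive induced subgraph $H'\supseteq Q$ of $H$ (so every internal vertex of $H'$, in particular the $v$ produced by Lemma~\ref{lemma-wheel2} applied to $H'$, is potentially useful in $H$). Applying Lemma~\ref{lemma-wheel2} to $G$ gives a degree-four vertex with none of these guarantees, and without them the troublesome-pentagon branch cannot be closed.

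A secondary issue: in the bidiamond case you propose to ``adapt the analysis of Lemma~\ref{lemma-redu-bidiamond},'' but that lemma already yields exactly the dichotomy you want (contractible bidiamond or $1$-contractible path) for any restrictive weak $4$-candidate; the cleaner move, used by the paper, is to invoke it once at the outset to assume $G$ has no bidiamond at all, which also supplies the no-bidiamond hypothesis you will need repeatedly inside the subsequent case analysis (e.g.\ to forbid degree-four neighbours and to rule out chords of $C_v$).
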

\begin{proof}
By Lemma~\ref{lemma-redu-bidiamond}, we can assume that $G$ does not contain a bidiamond.
Let $H$ be a minimal non-trivial restrictive subcanvas of $G$; note that $H$ is deepest.
Let $Q$ be the 4-cycle bounding the outer face of $H$.  Since $H$ is non-trivial and
all triangles in $G$ are facial, the cycle $Q$ is induced.  By Lemmas~\ref{lemma-redu-bidiamond} and \ref{lemma-noop},
we can assume that no internal vertex of $H$ has non-adjacent neighbors in $Q$.  

Suppose now that there exists an induced path $S=v_1vv_3w$ in $H$ such that $v_1,w\in V(Q)$, $v,v_3\not\in V(Q)$, and $v$ has degree four.
Let $v_1v_2v_3v_4$ be the cycle formed by the neighbors of $v$.  Note that $\{v_2,v_4\}\not\subseteq V(Q)$, since $v$ does not
have two non-adjacent neighbors in $Q$; by symmetry, we can assume that $v_4\not\in V(Q)$.
Let $Q=v_1u_1wu_2$, where $v_4$ is drawn in the open disk bounded by the $5$-cycle $v_1vv_3wu_2$.  Since $v_4$ does not have
two non-adjacent neighbors in $Q$, it is not adjacent to $w$.  Similarly, if $v_2\neq u_1$, then $v_2w\not\in E(G)$.
We can assume that the path $v_2vv_4$ is not $1$-contractible in $G$, and thus there exists an induced path $R$ of length at most three in $G-\{v,v_1,v_3\}$
between $v_2$ and $v_4$.  This is not possible if $v_2\neq u_1$, since then $v_4w,v_2w,u_1u_2\not\in E(G)$.  Therefore, $v_2=u_1$.

Suppose that $v_4$ is adjacent to $u_2$.  Note that $v_3$ is not adjacent to $v_2$, as otherwise $v$ and $v_4$ would form a bidiamond.
Consider any $4$-coloring $\psi$ of $Q$ whose type is rainbow or diagonal.
By symmetry, we can assume that $\psi(v_1)\neq\psi(w)$.  We can color $v_3$ by the color $\psi(v_1)$ and by Observation~\ref{obs-pre3}, extend the
4-coloring to the subcanvas drawn in the closed disk bounded by the 4-cycle $wv_3v_4u_2$.  Finally, we can greedily extend the coloring to $v$,
since two of its neighbors ($v_1$ and $v_3$) have the same color.  Therefore, any such 4-coloring of $Q$ extends to a 4-coloring $H$,
which is a contradiction since $H$ is restrictive.

Hence, we can assume that $v_4u_2\not\in E(G)$.
Let us now continue the discussion of the path $R$ showing that the path $v_2vv_4$ is not $1$-contractible.
Since $v_4$ is also not adjacent to $w$, the neighbor $x$ of $v_4$ in $R$ is contained in $V(H)\setminus V(Q)$.
Since $Q$ is an induced cycle, it follows that $R=v_4xwv_2$.  Therefore, $H$ is an amoeba.  Since the path $R$ shows that $v_2vv_4$ is not $1$-contractible,
the open disk bounded by the 5-cycle $v_2vv_4xw$ must contain at least two vertices, and thus the eye of the amoeba $H$ has at least one internal vertex.
It follows that the amoeba $H$ is proper, and thus the conclusion of the Lemma holds.
In conclusion,
\begin{itemize}
\item[($\star$)] there is no induced path in $H$ of length at most three with ends in $Q$ containing a vertex in $V(H)\setminus V(Q)$ of degree four.
In particular, every induced $(\le\!5)$-cycle $C$ in $G$ such that $C$ contains a vertex in $V(H)\setminus V(Q)$ of degree four
satisfies $C\subseteq H$.
\end{itemize}

Recall that the canvas $H$ is restrictive, and let $H'$ be a minimal induced subgraph of $H$ such that $Q\subseteq H'$ and $H'$ is restrictive.
In particular, all internal vertices of $H'$ are potentially useful in $H$.  Since $H$ is non-trivial and every triangle in $H$ bounds
a face, observe that $H'$ is non-trivial as well.  By Lemma~\ref{lemma-wheel2},  there exists an internal vertex $v\in V(H')$ of degree four such that all
incident faces of $H'$ have length three.  Thus, $v$ also has degree four in $H$.

Let $v_1v_2v_3v_4$ be the cycle formed by the neighbors of $v$;
since each triangle in $G$ bounds a face, this cycle is induced.  As we have observed, $v$ does not have two non-adjacent neighbors in $Q$,
and thus $\{v_1,v_3\}\not\subseteq V(Q)$ and $\{v_2,v_4\}\not\subseteq V(Q)$.  In particular, $v_1$ or $v_3$ is an internal vertex, and $v_2$ or $v_4$ is an internal vertex of $H$.

We can assume that the path $P_1=v_1vv_3$ is not $1$-contractible, as otherwise the conclusion of the Lemma holds.
Thus, $G-\{v,v_2,v_4\}$ contains an induced path $R_1$ of length at most three from $v_1$ to $v_3$ such that either $R_1$ has length two,
or $R_1$ has length three and the open disk bounded by the triangle $T_1$ in $G/P_1$ corresponding to $R_1$ contains at least two vertices.
By symmetry, we can assume that $v_4$ is contained in the open disk $\Lambda_1$ bounded by the cycle $P_1\cup R_1$ in $G$.

Similarly, we can assume that the path $P_2=v_2vv_4$ is not $1$-contractible, and thus $G-\{v,v_1,v_3\}$ contains an induced path $R_2$ of length
at most three from $v_2$ to $v_4$ such that either $R_2$ has length two, or $R_2$ has length three and the open disk bounded by the triangle $T_2$ in
$G/P_2$ corresponding to $R_2$ contains at least two vertices.  By symmetry, we can assume that $v_3$ is contained in the open disk $\Lambda_2$
bounded by the cycle $P_2\cup R_2$ in $G$.

By ($\star$), we have $R_1\cup R_2\subseteq H$.
By planarity, the paths $R_1$ and $R_2$ intersect in (at least) a vertex $z\in V(H)\setminus\{v, v_1, \ldots, v_4\}$.
Since each triangle in $G$ bounds a face, $z$ is not adjacent to both $v_1$ in $R_1$ and $v_2$ in $R_2$.  By symmetry,
we can assume that $z$ is not adjacent to $v_1$ in $R_1$, and thus the path $R_1$ has length three and $R_1=v_1z_1zv_3$ for a vertex $z_1$.

Suppose now that $z$ is adjacent to $v_2$ in $R_2$.  If $R_2$ has length two, then $z$ is also adjacent to $v_4$, and since all triangles
in $G$ bound faces, it follows that $\deg v_3=4$; however, then $v$ and $v_3$ would form a bidiamond.  Therefore, $R_2$ has length three,
and thus the subcanvas $H''$ of $H$ drawn in the closed disk bounded by the 4-cycle $v_1z_1zv_2$ is an amoeba. Moreover,
since the open disk bounded by $T_2$ contains at least two vertices, the eye of this amoeba contains at least one internal vertex.
Therefore, the amoeba $H''$ is proper, and thus the conclusion of the Lemma holds.

Hence, we can assume that $z$ is not adjacent to $v_2$ in $R_2$, and thus the path $R_2$ has length three and $R_2=v_2z_2zv_4$ for a vertex $z_2$.
If $v_1z_2\in E(G)$, then note that $v_1v_2z_2$ is a triangle bounding a face.  Observe that the subcanvas of $G$ drawn in the closed disk bounded by the
4-cycle $zv_4v_1z_2$ is an amoeba. Moreover, since the open disk bounded by $T_2$ contains at least two vertices, this amoeba is proper, and thus the conclusion of the Lemma holds.
Therefore, we can assume that $v_1z_2\not\in E(G)$, and symmetrically $v_2z_1\not\in E(G)$.

Let $F$ be the subcanvas of $H$ drawn in the closed disk bounded by the 5-cycle $v_1v_2z_2zz_1$.
Then $F$ is a troublesome pentagon with center $v$.  Since $v$ is potentially useful in $H$,
a wing of $F$ is restrictive.  Thus, by symmetry, we can assume that the subcanvas $H_1$ of
$H$ drawn in the closed disk bounded by the cycle $Q_1=v_1v_4zz_1$ is restrictive.

By the minimality of $H$, the subcanvas $H_1$ is trivial.  Since the open disk bounded by $T_1$ contains
at least two vertices, $H_1$ has exactly one internal vertex $y$ and $\deg y=4$.
Since the open disk bounded by $T_2$ contains at least two vertices, we have $v_3z_2\not\in E(G)$.
Since $v_2z_1\not\in E(G)$, it follows that the distance between $z_1$ and $v_4$ in $G-\{v_1,y,z\}$
is greater than three, and thus the path $v_4yz_1$ is $0$-contractible.  Therefore, the conclusion
of the theorem holds.
\end{proof}

Let $F$ be a weak $4$-candidate.  We say that a weak $4$-candidate $G$ is an \emph{expansion} of $F$ if $G$ is obtained from $F$ by one of the following operations,
see Figure~\ref{fig-expansion}:
\begin{itemize}
\item Decontraction of a bidiamond:  Choose a path $u'pv'$ in $F$.  Double the edges $u'p$ and $pv'$, letting $f_1$ and $f_2$
be the resulting $2$-faces.  Split $p$ into two vertices $x$ and $y$ along a simple curve from $f_1$ to $f_2$,
and add new vertices $u$ and $v$ and edges $ux$, $u'u$, $uy$, $uv$, $vx$, $vv'$, and $vy$ to the resulting face.
\item Decontraction of a $1$-contractible path:  Choose a vertex $p$ of $F$.  For $i\in \{1,2\}$,
either
\begin{itemize}
\item choose an edge $x_i=py_i$ incident with $p$, double it and let $f_i$ be the resulting $2$-face, or
\item choose a $3$-face $x_i$ incident with $p$, add a vertex $y_i$ of degree four inside $x_i$ adjacent to all the vertices of the boundary of $x_i$ and
with the edge $x_ip$ having multiplicity two, and let $f_i$ be the $2$-face bounded by this double edge,
\end{itemize}
so that $x_i\neq x_{3-i}$, and $x_i$ is not incident with $x_{3-i}$ in the case that $x_i$ is an edge and $x_{3-i}$ is a $3$-face.
Split $p$ into two vertices $u$ and $w$ along a simple curve from $f_1$ to $f_2$,
and add a vertex $v$ of degree four to the resulting 4-face bounded by the cycle $uy_1wy_2$.  
\item Amoeba expansion: Choose a non-proper amoeba in $F$ and replace its eye by any weak $4$-candidate with at least one internal vertex.
\end{itemize}

With this definition, Theorem~\ref{thm-redu-weak-cand} can be reformulated as follows.

\begin{corollary}\label{cor-generate-weak-onestep}
If Conjecture~\ref{conj-main-strong} holds, then
for every non-trivial restrictive weak $4$-candidate $G$, there exists a restrictive weak $4$-candidate $F$ such that
$G$ is an expansion of $F$.  Moreover, $F$ has the same kind as $G$.
\end{corollary}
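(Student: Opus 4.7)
The plan is to invoke Theorem~\ref{thm-redu-weak-cand} (which uses Conjecture~\ref{conj-main-strong}) to obtain one of three reducible configurations in $G$ and, in each case, construct $F$ by undoing the corresponding expansion.

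If $G$ contains a contractible bidiamond on internal vertices $u,v$ with common neighbors $x,y$ and additional neighbors $u',v'$, let $F$ be the contraction, i.e., the canvas obtained from $G-\{u,v\}$ by identifying $x$ with $y$ and suppressing $2$-faces. The remark immediately following the definition of a contractible bidiamond gives that $F$ is a restrictive weak $4$-candidate of the same kind as $G$; by construction, $G$ is recovered from $F$ by the decontraction-of-a-bidiamond expansion applied to the path $u'pv'$, where $p$ is the vertex arising from the identification of $x$ with $y$.

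If $G$ contains a $1$-contractible path $P=uvw$, define $F$ to be the canvas obtained from $G/P$ by deleting all triangle-isolated vertices. Observation~\ref{obs-reducopa} gives that $F$ is restrictive of the same kind as $G$, and since $G$ is a canvas, a short check shows that deleting the vertices sitting inside triangles of $G/P$ leaves $F$ a canvas and thus a weak $4$-candidate. The reverse operation is the decontraction of $P$ at the merged vertex $p$: for each of the two side-neighbors $x,y$ of $v$, one uses the edge $px_i$ when this neighbor survived into $F$, and one uses the $3$-face of $F$ at $p$ that enclosed the triangle-isolated neighbor when it did not (reinserting the corresponding vertex of degree four in that $3$-face).

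If $G$ contains a proper amoeba $H$ with eye bounded by $C=xyzv_3$, let $G^\circ$ be obtained from $G$ by blinding $H$, and let $F$ be obtained from $G^\circ$ by adding to the resulting internal $4$-face of $C$ one of the two diagonals $xz$ or $yv_3$, chosen so that no non-facial triangle is created. Lemma~\ref{lemma-amoeba} shows that $G^\circ$ is restrictive of the same kind as $G$, and since adding a chord only forbids more $4$-colorings, so is $F$. The triangle $wyz$, the incidences with $v_1,v_2,v_3$, and the vertex $x$ all lie outside the open disk bounded by $C$, hence the amoeba structure persists in $F$; the triangulated eye has no internal vertices, so the amoeba in $F$ is non-proper. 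Thus $G$ is recovered from $F$ by the amoeba-expansion operation that replaces the non-proper eye by the original eye of $H$, which is a weak $4$-candidate with at least one internal vertex by properness.

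The main obstacle is the amoeba case: one must verify that at least one of the diagonals $xz$, $yv_3$ can be added without creating a non-facial triangle. This should follow from a short planarity argument: if neither worked, then $x,z$ would have a common neighbor outside the eye, and likewise $y,v_3$ would have one, which together with the triangle $wyz$ and the amoeba's incidences with $v_1,v_2,v_3$ would force a non-facial triangle already in $G$, contrary to $G$ being a weak $4$-candidate.
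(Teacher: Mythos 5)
Your proposal has the same skeleton as the paper's proof: invoke Theorem~\ref{thm-redu-weak-cand} and undo the corresponding reduction in each of the three cases, using the remark after the definition of a contractible bidiamond, Observation~\ref{obs-reducopa}, and Lemma~\ref{lemma-amoeba} exactly as the paper does; the bidiamond and $1$-contractible-path cases are identical. The only divergence is the amoeba case. The paper does not keep the given proper amoeba: it re-chooses the amoeba so that its eye is as large as possible, which guarantees that $y$ and $v_3$ have no common neighbor outside the eye, and then adds the specific edge $yv_3$, so no extra planarity argument is needed. You instead keep the amoeba as given and add whichever of the two diagonals $xz$, $yv_3$ avoids a non-facial triangle. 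Your sketched justification of this choice is completable: if $x,z$ and $y,v_3$ each had a common neighbor outside the eye, the two corresponding paths in the exterior of $C$ would, by planarity, have to share their middle vertex $a$, and then the four triangles $axy$, $ayz$, $azv_3$, $av_3x$ cannot all bound faces (the exterior of $C$ also contains the outer $4$-face of $G$ and the rest of the amoeba), contradicting that $G$ is a weak $4$-candidate. However, your case analysis should also exclude the other failure mode, namely that the chosen diagonal is already an edge of $G$ drawn outside the eye, which would create a parallel edge rather than a non-facial triangle; this too is impossible, since such a chord would complete two triangles with the vertices of $C$ that cannot both be facial, but it is a step you did not address. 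So your argument is correct, trading the paper's maximal-eye trick for a short planarity lemma that you left as a sketch; what the paper's choice buys is precisely avoiding that lemma, while your version has the minor advantage of not having to verify that the enlarged configuration is again an amoeba.
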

\begin{proof}
Let us discuss each of the conclusions of Theorem~\ref{thm-redu-weak-cand} separately.

If $G$ contains a contractible bidiamond $\{u,v\}$, then let $F$ be obtained from $G$ by a contraction of this bidiamond,
so that $G$ is obtained from $F$ by a decontraction of a bidiamond.
Since the bidiamond is contractible, $F$ is a simple graph and every triangle in $F$ bounds a face, and thus $F$ is a weak $4$-candidate.
Moreover, every $4$-coloring $\varphi$ of $F$ extends to a $4$-coloring of $G$: 
Let $x$ and $y$ be the common neighbors of $u$ and $v$, let $u'$ and $v'$ be the neighbors of $u$ and $v$, respectively, not in $\{u,v,x,y\}$,
and let $p$ be the vertex of $F$ obtained by identifying $x$ with $y$.  Without loss of generality, we can assume that $\varphi(p)=1$, $\varphi(u')=2$,
and $\varphi(v')\in\{2,3\}$.  We give $x$ and $y$ the color $1$, $u$ the color $3$, and $v$ the color $4$.
Therefore, since $G$ is restrictive, $F$ is restrictive as well, and has the same kind.

If $G$ contains a $1$-contractible path $P$, then let $F$ be obtained from $G/P$ by deleting all (at most two) triangle-isolated vertices,
so that $F$ is a weak $4$-candidate.  Observe that $G$ is obtained from $F$ by a decontraction of a $1$-contractible path.
Moreover, by Observation~\ref{obs-reducopa}, the weak $4$-candidate $F$ is restrictive and has the same kind as $G$.

Finally, suppose that $H$ is a proper amoeba in $G$, with vertices labeled as in the definition of an amoeba.  We can choose $H$ so that the eye of $H$ is
as large as possible; thus, the vertices $y$ and $v_3$ do not have any common neighbor not belonging to the eye.  Let $F$ be obtained from $G$ by deleting
the internal vertices of the eye and adding the edge $yv_3$.  Then $F$ is a weak $4$-candidate and $G$ is obtained from $F$ by amoeba expansion.
Moreover, by Lemma~\ref{lemma-amoeba}, the weak $4$-candidate $F$ is restrictive (actually, even the canvas $F-yv_3$ is restrictive) and has the same kind as $G$.
\end{proof}

By induction, this has the following consequence.

\begin{corollary}\label{cor-generate-weak}
If Conjecture~\ref{conj-main-strong} holds, then
for every restrictive weak $4$-candidate $G$, there exists a sequence $G_0$, $G_1$, \ldots, $G_m=G$ of restrictive weak $4$-candidates of the same kind as $G$
such that $G_0$ is trivial and for every $i\in\{1,\ldots,m\}$, the weak $4$-candidate $G_i$ is an expansion of $G_{i-1}$.
\end{corollary}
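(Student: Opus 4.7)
The plan is to prove this by straightforward induction on $|V(G)|$ (or, equivalently, on the number of internal vertices of $G$), using Corollary~\ref{cor-generate-weak-onestep} as the inductive step.

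For the base case, if $G$ is trivial, then I simply take $m=0$ and $G_0=G$; the conditions are vacuously satisfied. For the inductive step, assume $G$ is a non-trivial restrictive weak $4$-candidate and that the claim holds for all restrictive weak $4$-candidates with fewer vertices. By Corollary~\ref{cor-generate-weak-onestep} (which needs Conjecture~\ref{conj-main-strong}), there exists a restrictive weak $4$-candidate $F$ of the same kind as $G$ such that $G$ is an expansion of $F$. The induction hypothesis, applied to $F$, yields a sequence $G_0,\ldots,G_{m-1}=F$ of restrictive weak $4$-candidates of the same kind as $F$ (hence of the same kind as $G$) with $G_0$ trivial and each $G_i$ an expansion of $G_{i-1}$. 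Setting $G_m=G$ extends this to the desired sequence for $G$.

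To justify the induction, I need to verify that $|V(F)|<|V(G)|$ for each of the three expansion operations used in Corollary~\ref{cor-generate-weak-onestep}. For decontraction of a bidiamond, a single vertex $p$ is replaced by four vertices $x,y,u,v$, giving a net gain of three vertices. For decontraction of a $1$-contractible path, the vertex $p$ is split into $u$ and $w$ and the new degree-four vertex $v$ is added (and possibly additional vertices $y_1,y_2$ if a face rather than an edge was chosen), so the net change is at least $+2$. For amoeba expansion, the eye of a non-proper amoeba (which by definition has no internal vertex) is replaced by a weak $4$-candidate with at least one internal vertex, so at least one vertex is added. In all cases $|V(F)|<|V(G)|$, so the induction terminates.

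I do not anticipate a genuine obstacle here: the entire content of the statement is already packaged inside Corollary~\ref{cor-generate-weak-onestep}, and the only routine verification is the strict monotonicity of the vertex count under expansion, which is immediate from the definitions of the three operations listed before the corollary.
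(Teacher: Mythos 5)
Your proposal is correct and is exactly the argument the paper intends: the paper derives Corollary~\ref{cor-generate-weak} from Corollary~\ref{cor-generate-weak-onestep} simply "by induction," and your verification that each expansion strictly increases the number of vertices (also noted in the paper's remark that expansions increase the vertex count) is the only routine detail needed to make that induction explicit.
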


Note that the diamond is the only trivial weak $4$-candidate of kind \texttt{d} and $W_4$ is the only trivial weak $4$-candidate of kind \texttt{r}.
Since the graphs in $\CC_0$ are exactly the graphs obtained from weak $4$-candidates by adding a crossing to the outer face, Corollary~\ref{cor-generate-weak}
implies Theorem~\ref{thm-gen4}, with the operation of expansion naturally lifted to the graphs in $\CC_0$.

\section{How to prove Conjecture~\ref{conj-main-strong}?}\label{sec-proofcom}

It is easy to see that Conjecture~\ref{conj-main-strong} is a strengthening of the Four Color Theorem (indeed, it is well-known and straightforward to prove
that a minimal counterexample to the Four Color Theorem would have minimum degree five and no separating triangles).  Up to some minor variations, we know
essentially only one proof of the Four Color Theorem: Using Kempe chains, prove that many configurations in planar graphs are \emph{reducible}, i.e.,
cannot appear in a minimal counterexample, then use discharging to show that at least one of them appears in every planar graph.
It is natural to ask whether a similar approach can be used to prove Conjecture~\ref{conj-main-strong}.  This indeed seems promising, but there are several
issues to overcome (we state them in the setting of restrictive 4-candidates, which is equivalent by Corollary~\ref{cor-combined}).
\begin{itemize}
\item Some of the Kempe chains may contain precolored vertices. This limits how we can exchange colors on Kempe chains,
and results in fewer of the configurations being reducible.
\item Reducibility arguments often involve replacing a configuration by a smaller one. This could decrease degrees of vertices or create non-facial
triangles, and thus we would not obtain a smaller counterexample and reach a contradiction.
\item The reducible configurations typically cannot contain precolored vertices, and this affects the discharging procedure.
\end{itemize}
The last issue is the least severe, since there is a standard technique for dealing with it by allocating extra charge for the precolored vertices.
We discuss the first two issues in more detail in the rest of this section.
Let us start with a useful observation.
\begin{lemma}\label{lemma-holefour}
Let $G$ be a restrictive weak $4$-candidate, let $K$ be a 4-cycle in $G$, and let $H$ be the subcanvas of $G$ drawn in the closed disk bounded by $K$.
If $H$ is not restrictive, then the graph $G_1$ obtained from $G$ by deleting the internal vertices of $H$ is restrictive.
Moreover, if $u,v\in V(K)$ are adjacent internal vertices whose degree in $G_1$ is four, then the graph $G_1-\{u,v\}$ is also restrictive.
\end{lemma}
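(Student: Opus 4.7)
The plan is to fix a rainbow or diagonal precoloring $\psi$ of the outer 4-cycle $K_0$ of $G$ that does not extend to a 4-coloring of $G$---such a $\psi$ exists since $G$ is restrictive---and argue that $\psi$ cannot extend to $G_1$ either, so $G_1$ is restrictive of the same kind. Write $K = y_1y_2y_3y_4$ and suppose for contradiction that $\psi$ extends to a 4-coloring $\varphi_1$ of $G_1$. I would split on the type of the restriction $\varphi_1|_K$.

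If $\varphi_1|_K$ has rainbow or diagonal type, then since $H$ is not restrictive it admits a matching 4-coloring, which combined with $\varphi_1$ extends $\psi$ to $G$---contradicting the choice of $\psi$. So $\varphi_1|_K$ must be bichromatic. Viewing $H$ with $K$ as its outer face (via stereographic projection), Observation~\ref{obs-identop} gives two subcases: either $H$ has some bichromatic 4-coloring of $K$, in which case a permutation of colors matches $\varphi_1|_K$ and the same contradiction follows; or $H$ is bichromatic-forbidding while still admitting matches for the rainbow and both diagonal types on $K$. In the latter subcase, my plan is to replace $\varphi_1$ by a 4-coloring $\varphi_1'$ of $G_1$ whose restriction to $K$ is diagonal and whose restriction to $K_0$ is still of rainbow or diagonal type, and then apply the easy case.

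The Kempe-chain construction of $\varphi_1'$ is where I expect the main obstacle. Writing $\varphi_1|_K=(1,2,1,2)$, one seeks a Kempe $\{a,b\}$-chain $C$ in $G_1$ whose swap flips the color of exactly one vertex of $K$, turning $\varphi_1|_K$ into a diagonal pattern, while acting on $K_0$ in a way that preserves the type of $\psi$. The flexibility should come from the fact that $\psi$ uses at most three colors on $K_0$ (being rainbow or diagonal), leaving at least one color pair free to act on $K$; and from a planarity argument in the spirit of Lemma~\ref{obs-adjacent}, applied to $G_1$ redrawn on the sphere with $K$ as the outer face and $K_0$ as an internal 4-face---so that the interior is an annulus---showing that if every candidate chain simultaneously linked opposite vertices of $K$, the resulting color-alternating paths would be incompatible with the rigid rainbow/diagonal pattern on $K_0$. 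Making this planarity step precise, while tracking what the swap does to the type of $\psi$, is the delicate point.

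For the moreover statement, I would run the same argument with $H$ replaced by the subcanvas $H^+$ of $G$ whose internal vertex set is $(V(H)\setminus V(K))\cup\{u,v\}$, so that deleting its internal vertices from $G$ yields exactly $G_1-\{u,v\}$. The hypotheses $uv\in E(K)$ and $\deg_{G_1}(u)=\deg_{G_1}(v)=4$ ensure that $H^+$ is a subcanvas whose outer face is still bounded by a 4-cycle, obtained from $K$ by detouring the edge $uv$ through the $G_1$-external neighbors of $u$ and $v$; and a short verification---using that $u$ and $v$ each have only two neighbors outside $\{u,v\}\cup V(K)$ and so can always be coloured after their other neighbors have been coloured---shows that $H^+$ inherits the non-restrictiveness of $H$. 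Applying the main part of the lemma to the triple $(G,H^+,G_1-\{u,v\})$ then gives the conclusion.
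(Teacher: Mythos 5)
Your outline for the main part matches the paper's strategy: assume $\psi$ extends to a $4$-coloring $\varphi_1$ of $G_1$, deduce that $\varphi_1|_K$ is bichromatic (since $H$ is not restrictive, any rainbow or diagonal restriction to $K$ would combine with a coloring of $H$ to give an extension of $\psi$ to $G$), and then try to Kempe-shift $\varphi_1$ so that its restriction to $K$ becomes diagonal without altering the type of the precoloring on the outer cycle. But the step you label ``the delicate point'' is precisely where the entire content of the lemma lives. The paper carries it out by examining, for each relevant color pair $\{c,c'\}$ and each vertex $v_i$ of $K$, whether the $\{c,c'\}$-Kempe component of $v_i$ in $G_1$ reaches the opposite vertex of $K$ or reaches the outer cycle: if neither, one can swap directly; otherwise one swaps the complementary pair inside the annulus; and whenever no safe swap exists the component must reach the outer cycle carrying only the colors $c,c'$, which by planarity forces a pair of non-adjacent outer vertices to carry colors in $\{c,c'\}$. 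Running this over all pairs pins $\psi$ down to a bichromatic type, the contradiction. Until that is carried out, what you have is an approach, not a proof. (Also, a rainbow coloring of the outer cycle uses all four colors, so ``at most three colors'' is wrong in the rainbow case, and the claimed ``free color pair'' need not exist.)

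The ``moreover'' reduction fails at the setup. Write $u=v_1$, $v=v_2$. Since $\deg_{G_1}v_1=\deg_{G_1}v_2=4$ and $K$ bounds a face of $G_1$, the faces at $v_1,v_2$ outside $K$ are the triangles $v_1v_4w_1$, $v_1w_1z$, $v_1zv_2$, $v_2zw_2$, $v_2w_2v_3$ with $w_1,z,w_2$ pairwise distinct, so after deleting $v_1,v_2$ the enlarged hole in $G_1-\{u,v\}$ is bounded by the \emph{five}-cycle $K'=w_1zw_2v_3v_4$, not a $4$-cycle. Hence the subcanvas $H^+$ you define has a pentagonal outer face, the notion of ``restrictive'' (defined only for a quadrilateral outer face) does not apply to it, and the main part of the lemma cannot be invoked on the triple $(G,H^+,G_1-\{u,v\})$. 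The paper instead gives a separate, parallel Kempe-chain argument for $G_1-\{u,v\}$ adapted to the $5$-cycle $K'$, first using the five forced facial triangles and the already-proved restrictiveness of $G_1$ to pin down the coloring of $K'$ to a rigid form (up to symmetry, $\varphi'_1(z)=1$, $\varphi'_1(w_1)=\varphi'_1(v_3)=2$, $\varphi'_1(w_2)=\varphi'_1(v_4)=3$) and then running the chain analysis from there.
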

\begin{proof}
Let $Q$ be the cycle bounding the outer face of $G$, and let $\psi$ be a 4-coloring of $Q$ which does not extend to a 4-coloring of $G$.
Suppose for a contradiction that $\psi$ extends to a 4-coloring $\varphi_1$ of $G_1$, and let $\psi_1$ be the restriction of $\varphi_1$ to $K$.
Then $\psi_1$ does not extend to a 4-coloring of $H$, and since $H$ is not restrictive, it follows that $\psi_1$ is bichromatic.
Let $K=v_1v_2v_3v_4$; we can assume that $\psi_1(v_1)=\psi_1(v_3)=1$ and $\psi_2(v_2)=\psi_2(v_4)=2$.

For any distinct colors $c_1$ and $c_2$,
let $G_{c_1,c_2}$ be the subgraph of $G_1$ induced by the vertices to which $\varphi_1$ assigns colors $c_1$ and $c_2$.
For a vertex $v$ such that $\varphi_1(v)\in\{c_1,c_2\}$, let $G_{c_1,c_2}(v)$ be the component of $G_{c_1,c_2}$ containing $v$.
If $G_{1,3}(v_1)$ contains $v_3$, then observe that by planarity, there exists $i\in\{2,4\}$ such that $G_{2,4}(v_i)$
is disjoint from $Q$ and does not contain $v_{6-i}$.  However, then we can exchange the colors $2$ and $4$ on $G_{2,4}(v_i)$,
turning $\varphi_1$ into a 4-coloring $\varphi_3$ of $G_1$ extending $\psi$ and whose restriction to $K$ is $v_1$-diagonal.  Since $H$ is not restrictive,
$\varphi_2$ further extends to a 4-coloring of $G$, which is a contradiction.  If $G_{1,3}(v_1)$ does not contain $v_3$ and does not intersect $Q$,
then we can similarly exchange colors $1$ and $3$ on $G_{1,3}(v_1)$ to turn $\varphi_1$ into a 4-coloring which is $v_2$-diagonal and extends $\psi$, then further extend it
to a 4-coloring of $G$, again obtaining a contradiction.

We conclude that $G_{1,3}(v_1)$ does not contain $v_3$ and intersects $Q$.  By symmetry, for every $i\in\{1,\ldots,4\}$ and $c\in\{3,4\}$,
the subgraph $G_{\psi_1(v_i),c}(v_i)$ intersects $K$ only in $v_i$ and intersects $Q$.  Note that this implies that
for every $x\in V(G_{1,3}(v_1)\cap Q)$ and $y\in V(G_{1,3}(v_3)\cap Q)$, the vertices $x$ and $y$ are distinct and non-adjacent.  Therefore, letting $Q=u_1u_2u_3u_4$, we can
assume that $\psi(u_1),\psi(u_3)\in \{1,3\}$ and $\psi(u_2),\psi(u_4)\in \{2,4\}$.

If $u_1\in V(G_{1,4}(v_1)\cap Q)\cup V(G_{1,4}(v_3)\cap Q)$, we similarly conclude that $\psi(u_1),\psi(u_3)\in \{1,4\}$ and $\psi(u_2),\psi(u_4)\in \{2,3\}$,
and thus $\psi(u_1)=\psi(u_3)=1$ and $\psi(u_2)=\psi(u_4)=2$.  Otherwise, we have $V(G_{1,4}(v_1)\cap Q)\cup V(G_{1,4}(v_3)\cap Q)=\{u_2,u_4\}$,
$\psi(u_2),\psi(u_4)\in\{1,4\}$ and $\psi(u_1),\psi(u_3)\in \{2,3\}$, and thus $\psi(u_1)=\psi(u_3)=3$ and $\psi(u_2)=\psi(u_4)=4$.
We conclude that $\psi$ is bichromatic, which is a contradiction.  Therefore, $G_1$ is restrictive.

Suppose now that say $v_1$ and $v_2$ are internal vertices whose degree in $G_1$ is four, and suppose for a contradiction that $\psi$ extends to a 4-coloring $\varphi'_1$
of $G_2=G_1-\{v_1,v_2\}$.  Since $G$ is a weak $4$-candidate, $G_1$ contains triangles $v_1v_4w_1$, $v_2v_3w_2$, $v_1v_2z$, $v_1w_1z$, and $v_2w_2z$ bounding faces,
and $K'=w_1zw_2v_3v_4$ is a cycle bounding a face of $G_2$.  Since $G_1$ is restrictive, $\varphi'_1$ cannot be extended to $v_1$ and $v_2$, and thus we can
assume that $\varphi'_1(z)=1$, $\varphi'_1(w_1)=\varphi'_1(v_3)=2$, and $\varphi'_1(w_2)=\varphi'_1(v_4)=3$.

For any distinct colors $c_1$ and $c_2$,
let $G'_{c_1,c_2}$ be the subgraph of $G_2$ induced by the vertices to which $\varphi'_1$ assigns colors $c_1$ and $c_2$.
For a vertex $v$ such that $\varphi'_1(v)\in\{c_1,c_2\}$, let $G'_{c_1,c_2}(v)$ be the component of $G'_{c_1,c_2}$ containing~$v$.
If $G'_{1,3}(v_4)$ contains $z$, then observe that by planarity, there exists $u\in\{v_3,w_1\}$ such that $G'_{2,4}(u)$
is disjoint from $Q$ and does not contain the vertex of $\{v_3,w_1\}\setminus \{u\}$.  However, then we can exchange the colors $2$ and $4$ on $G'_{2,4}(u)$,
turning $\varphi'_1$ into a 4-coloring of $G_2$ which extends $\psi$ and which can be extended to a 4-coloring of $G_1$, which is a contradiction.
Therefore, $z\not\in V(G'_{1,3}(v_4))$ and $v_4\not\in V(G'_{1,3}(z))$.  Let us remark that $w_2$ always belongs to $V(G'_{1,3}(z))$.
If $G'_{1,3}(v_4)$ or $G'_{1,3}(z)$ is disjoint from $Q$, then we can similarly
exchange the colors $1$ and $3$ on $G'_{1,3}(v_4)$ or $G'_{1,3}(z)$ and turn $\varphi'_1$ into a 4-coloring of $G_2$ which extends $\psi$ and which can be extended to a 4-coloring of $G_1$,
again obtaining a contradiction.

We conclude that $G'_{1,3}(v_4)$ and $G'_{1,3}(z)$ are disjoint and intersect $Q$.  By planarity, $G'_{2,4}(v_3)$ and $G'_{2,4}(w_1)$ are disjoint,
and a similar argument shows that they intersect $Q$.  Symmetrically, $G'_{1,2}(v_3)$ and $G'_{1,2}(z)$ are disjoint and intersect $Q$,
and $G'_{3,4}(v_4)$ and $G'_{3,4}(w_2)$ are disjoint and intersect $Q$.  As in the previous case, this is only possible if $\psi$ is bichromatic,
which is a contradiction.
\end{proof}

In particular, using this lemma, we can exclude separating 4-cycles from a minimal counterexample to Conjecture~\ref{conj-main-strong}.
\begin{corollary}\label{cor-no4}
Let $G$ be a restrictive 4-candidate and let $K$ be a $4$-cycle in $G$ that does not bound the outer face.
If no 4-candidate with less that $|V(G)|$ vertices is restrictive, then the cycle $K$ is hollow.
\end{corollary}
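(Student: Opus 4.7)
The plan is to consider the subcandidate $H$ of $G$ drawn in the closed disk bounded by $K$ and split into cases depending on whether $H$ is restrictive. First I would argue that $K$ must be an induced $4$-cycle of $G$: any chord of $K$ drawn inside would split the interior into two triangular regions, both of which (being triangles in a candidate) would have to bound faces, forcing $K$ to be hollow; any chord drawn outside would force one of the two resulting outside triangles to bound a face containing the outer $4$-face $K_0$, which is impossible. Consequently $H$ is itself a $4$-candidate with outer face $K$ and at least one internal vertex, and since $K \neq K_0$ there is some vertex of $V(K_0) \setminus V(K)$ lying outside the closed disk bounded by $K$, so $|V(H)| < |V(G)|$.

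If $H$ is restrictive, then $H$ is a restrictive $4$-candidate with fewer than $|V(G)|$ vertices, directly contradicting the minimality hypothesis. Otherwise $H$ is not restrictive, and Lemma~\ref{lemma-holefour} yields that the graph $G_1$ obtained from $G$ by deleting the internal vertices of $H$ (still with outer face $K_0$) is restrictive, with $|V(G_1)| < |V(G)|$. The issue is that $G_1$ has an interior $4$-face where $H$ used to sit, so it is not a $4$-candidate. Since adding edges to a restrictive graph preserves restrictiveness, the natural attempt is to insert a chord $v_1 v_3$ or $v_2 v_4$ into this $4$-face to produce a graph $G^*$ with $|V(G^*)| = |V(G_1)| < |V(G)|$; it would then remain to choose the chord so that $G^*$ is a valid $4$-candidate, yielding the desired contradiction.

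To make $G^*$ a bona fide $4$-candidate, two conditions need to be verified: no new non-facial triangle is created, and every internal vertex has degree at least $5$. A new triangle $v_1 u v_3$ appears precisely when $u \notin \{v_2, v_4\}$ is a common neighbor of $v_1$ and $v_3$ in $G_1$, i.e., a common neighbor drawn outside $K$ in $G$. By planarity (the exterior of $K$ is a topological disk with $v_1, v_2, v_3, v_4$ on its boundary in cyclic order), at most one of the two diagonals can fail unless a single vertex $u$ is adjacent to all four of $V(K)$ outside $K$; this ``cap'' configuration requires separate treatment, for instance by shifting attention to a different $4$-cycle through $u$ or by also deleting $u$. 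For the degree condition, an internal vertex $v_i \in V(K)$ may have lost neighbors inside $H$ and dropped below degree $5$ in $G_1$; in that case I would invoke the ``moreover'' clause of Lemma~\ref{lemma-holefour} to delete a pair of adjacent internal degree-$4$ vertices on $K$, iteratively shrinking the graph while keeping it restrictive, and then retriangulate the resulting hole with a chord. The hard part will be handling the ``cap'' case and the low-degree case simultaneously, and verifying that the bookkeeping of chord insertions and vertex deletions always terminates with a genuine smaller restrictive $4$-candidate.
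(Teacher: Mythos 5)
Your opening steps match the paper: take the subcanvas $H$ bounded by $K$, observe that $H$ is itself a $4$-candidate on fewer than $|V(G)|$ vertices (so by the minimality hypothesis $H$ cannot be restrictive), and apply Lemma~\ref{lemma-holefour} to get a restrictive graph $G_1$ with a $4$-face where $H$ used to be. The observation that $K$ must be induced is correct, though it is implicit in the paper's argument. From that point on, however, your plan has genuine gaps, some of which you flag yourself.

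First, your scheme for restoring the candidate property does not terminate in a usable state. After invoking the ``moreover'' clause of Lemma~\ref{lemma-holefour} to delete two adjacent degree-$4$ vertices of $K$, the resulting hole is bounded by a cycle of length at least five, so a single chord cannot ``retriangulate'' it, and adding several chords into a pentagon or larger face risks creating vertices of degree at most four or non-facial triangles all over again. The paper resolves exactly this problem by a dichotomy you do not have: it runs an iterative cleanup deleting (via Observation~\ref{obs-pre3}) everything cut off by induced paths of length at most two into the hole, after which the boundary cycle $K'$ is induced and \emph{no vertex outside $K'$ has non-adjacent neighbors on $K'$} (whence all internal vertices of $K'$ have degree at least four); then, if $K'$ is still a $4$-cycle one inserts a well-chosen chord, but if $K'$ has length at least five one inserts a \emph{new vertex} joined to all of $K'$, which automatically has degree at least five and creates only facial triangles. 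That final ``add a vertex'' option is the ingredient your plan is missing. Second, you do not exploit the freedom to choose $K$ so that $H$ is maximal; the paper uses this to ensure that no other $4$-cycle of $G_1$ (besides $Q$ and $K$) encloses the new face, which together with the iterative cleanup is what guarantees the inserted chord creates no non-facial triangle. Your ``cap'' configuration (a vertex $u$ outside $K$ adjacent to all four vertices of $K$) in fact cannot occur in a candidate whose outer face is a $4$-cycle --- the four triangles $uv_iv_{i+1}$ would have to bound faces that tile the entire exterior of $K$, leaving no room for a $4$-sided outer face --- but the non-cap situation where $v_1$ and $v_3$ share a single exterior neighbor still has to be excluded, and you have no mechanism for that. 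In short, the skeleton (Lemma~\ref{lemma-holefour} plus ``fill the hole'') is right, but the crucial combination of maximality of $K$, the iterative shortcut-deletion, and the chord/vertex dichotomy is absent, so the proposal as written does not go through.
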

\begin{proof}
Let $Q$ be the 4-cycle bounding the outer face of $G$.
Suppose for a contradiction that the subcanvas $H$ of $G$ drawn in the closed disk bounded by $K$ has an internal vertex.
Let $G_1$ be the subgraph of $G$ obtained by deleting the internal vertices of $H$, and let $f$ be the 4-face of $G_1$ bounded by $K$.
We can assume that the 4-cycle $K$ is chosen so that $H$ is maximal, and thus no $4$-cycle in $G_1$ other than $Q$ and $K$ bounds
an open disk containing $f$.  If $K$ contains adjacent internal vertices $u$ and $v$ whose degree in $G_1$ is four, then
let $G_2=G_1-\{u,v\}$, otherwise let $G_2=G_1$.  By Lemma~\ref{lemma-holefour}, the graph $G_2$ is restrictive.

Next, iteratively repeat the following procedure:
Let $f'$ be the face of $G_2$ containing $f$.  If $G_2$ contains an induced path $P$ of length at most two with ends incident with
$f'$ and otherwise disjoint from the boundary of $f'$, then let $\Delta$ be the minimal closed disk containing $f'$ and $P$,
delete from $G_2$ the vertices and edges drawn in the interior of $\Delta$, and repeat.

Let $G_3$ be the resulting graph.  Using Observation~\ref{obs-pre3}, it is easy to see that every 4-coloring of $G_3$ extends to a 4-coloring of $G_2$,
and thus $G_3$ is restrictive.  Furthermore, observe that
\begin{itemize}
\item all faces of $G_3$ except for $f'$ and the outer one are triangles,
\item all internal vertices of $G_3$ not incident with $f'$ have degree at least five,
\item the face $f'$ is bounded by an induced cycle $K'$ and no vertex of $V(G_3)\setminus V(K')$ has non-adjacent neighbors in $K'$,
and in particular all internal vertices of $K'$ have degree at least four, and
\item either $G_3=G_1$, or $K'$ has length at least five.
\end{itemize}
If $G_3\neq G_1$, then let $G'$ be the graph obtained from $G_3$ by adding a vertex adjacent to all vertices of $K'$ and drawing it inside $f'$.
Then $G'$ is a restrictive 4-candidate with fewer vertices than $G$, which is a contradiction.

Hence, suppose that $G_3=G_1$. The construction of $G_2$ implies that no two internal vertices of $K$ of degree four are adjacent.
Hence, there exist non-adjacent vertices $x,y\in V(K)$
such that neither of the two vertices of $V(K)\setminus\{x,y\}$ is an internal vertex of degree four.  In this case, we let $G'=G_3+xy$.
Again, observe that $G'$ is a restrictive 4-candidate with fewer vertices than $G$, which is a contradiction.
\end{proof}

This allows us to exclude reducible configurations which do not need to be replaced by smaller ones.  More precisely, the following
claim holds.
\begin{corollary}\label{cor-dredu}
Let $G$ be a restrictive 4-candidate and let $K$ be a cycle in $G$ such that the open disk $\Lambda$ bounded by $K$ contains at least two vertices.
Let $G_1$ be the graph obtained from $G$ by deleting the vertices and edges drawn in $\Lambda$.  If no 4-candidate with less that $|V(G)|$ vertices is restrictive, then
$G_1$ is not restrictive.
\end{corollary}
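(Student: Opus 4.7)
The plan is to assume for contradiction that $G_1$ is restrictive and to construct a restrictive $4$-candidate $G^*$ with $|V(G^*)|<|V(G)|$, contradicting the minimality hypothesis. First, applying Corollary~\ref{cor-no4} to $G$, every $4$-cycle in $G$ other than the outer face is hollow; since $\Lambda$ contains at least two vertices, either $K$ is the outer $4$-cycle (in which case $G_1$ is this $4$-cycle and obviously not restrictive, a contradiction) or $|K|\ge 5$. So I may assume $|K|\ge 5$; let $f'$ denote the face of $G_1$ drawn in the closed disk bounded by $K$.

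Next, I would mimic the iterative reduction used in the proof of Corollary~\ref{cor-no4}, starting from $H:=G_1$ and writing $K^*$ for the boundary cycle of $f'$ in the current $H$. Two moves each preserve restrictiveness and strictly reduce $|V(H)|$: (i) if there is an induced path $P$ of length at most two in $H$ whose ends lie on $K^*$ but which is otherwise disjoint from $K^*$, delete the vertices and edges drawn in the interior of the minimal closed disk containing $f'$ and $P$ (Observation~\ref{obs-pre3} preserves restrictiveness); (ii) if $|K^*|=4$ and $K^*$ contains adjacent internal vertices $u,v$ of degree four in $H$, apply the ``moreover'' clause of Lemma~\ref{lemma-holefour}, with $K^*$ in the role of $K$ and the trivially non-restrictive subcanvas being $K^*$ itself, to delete $u$ and $v$. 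Since each move strictly decreases $|V(H)|$, the process terminates at some $H_m$ with the standard structural conclusions from Corollary~\ref{cor-no4}'s proof: $K^*$ is an induced cycle, no vertex outside $K^*$ has non-adjacent neighbors in $K^*$, all non-$f'$ internal faces of $H_m$ are triangles, internal vertices not on $K^*$ have degree at least five, internal vertices of $K^*$ have degree at least four, and if $|K^*|=4$ then no two adjacent vertices of $K^*$ are internal of degree four.

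To finish, I would close off $f'$. If $|K^*|\ge 5$, let $G^*$ be obtained from $H_m$ by inserting inside $f'$ a new vertex adjacent to every vertex of $K^*$: this new vertex has degree $|K^*|\ge 5$, each internal vertex of $K^*$ acquires an additional neighbor raising its degree to at least five, and no non-facial triangle is created because $K^*$ is induced. If $|K^*|=4$, the last property of $H_m$ forces the set of internal degree-four vertices on $K^*$ to lie in a single diagonal pair; taking the other diagonal pair $\{x,y\}$ and letting $G^*=H_m+xy$ gives a $4$-candidate in which the only triangles through $xy$ are the two that split $f'$, which are facial (here I use that $K^*$ is induced and that no outside vertex has non-adjacent neighbors in $K^*$). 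In both cases $G^*$ is a $4$-candidate, it is restrictive because every $4$-coloring of $G^*$ restricts to a $4$-coloring of $H_m$, and $|V(G^*)|\le |V(H_m)|+1\le |V(G_1)|+1\le |V(G)|-1$, contradicting the minimality assumption.

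The principal obstacle I anticipate is the bookkeeping that justifies the use of Lemma~\ref{lemma-holefour} in move (ii) and the standard structural conclusions at termination: namely, that $H$ remains a weak $4$-candidate throughout the process (all triangles facial and all internal vertices of degree at least four) and that the degree and triangle-facedness conditions genuinely hold for $H_m$. These amount to re-running the planarity and Kempe-chain arguments already developed in the proofs of Corollary~\ref{cor-no4} and Lemma~\ref{lemma-holefour}, tailored to the case $|K|\ge 5$ instead of $|K|=4$.
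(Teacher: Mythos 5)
Your overall strategy (trim along induced paths of length at most two, then cap the enlarged face to obtain a smaller restrictive $4$-candidate) is the paper's, but your treatment of a possible $4$-cycle boundary at termination has two genuine problems. First, move (ii) invokes the ``moreover'' clause of Lemma~\ref{lemma-holefour} with ``the subcanvas being $K^*$ itself'', but then the ambient graph of the lemma would have to be the current graph $H$, which is not a weak $4$-candidate: it has the non-triangular internal face $f'$, so it is not even a canvas, and the lemma's hypotheses (a restrictive weak $4$-candidate containing a $4$-cycle whose filled interior is non-restrictive) are simply not available in your situation. The Kempe-chain argument in the lemma's proof can indeed be adapted, but that is a re-derivation of a variant statement, not bookkeeping, and as written the step is unjustified. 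Second, and more decisively, your capping in the case $|K^*|=4$ adds the chord between the wrong diagonal: if the internal degree-four vertices of $K^*$ lie in one diagonal pair and you add the edge $xy$ joining the \emph{other} pair, the degrees of the offending vertices are unchanged, so $G^*$ still has internal vertices of degree four and is not a $4$-candidate. (In the proof of Corollary~\ref{cor-no4} the chord is chosen so that neither vertex \emph{outside} the chord is an internal vertex of degree four; that is, the chord endpoints are the potential degree-four vertices, whose degrees are thereby raised to five.)

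Both issues disappear if you argue as the paper does: the entire $|K^*|=4$ branch is vacuous. The final boundary cycle $K^*$ is a cycle of the original candidate $G$, and the open disk it bounds contains $\Lambda$ and hence at least two vertices, so $K^*$ is not hollow in $G$; it cannot be a triangle because every triangle of $G$ bounds a face, it cannot be a $4$-cycle other than the outer one by Corollary~\ref{cor-no4}, and it cannot be the outer $4$-cycle because then $H_m$ would be just that cycle, which is not restrictive, contradicting the restrictiveness you maintain along the iteration. Hence $|K^*|\ge 5$ holds automatically, only your apex construction is needed, and move (ii) together with the chord case can be dropped entirely (your initial application of Corollary~\ref{cor-no4} to get $|K|\ge 5$ is likewise unnecessary).
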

\begin{proof}
Suppose for a contradiction that $G_1$ is restrictive.  Iteratively repeat the following procedure:
Let $f$ be the face of $G_1$ containing $\Lambda$.  If $G_1$ contains an induced path $P$ of length at most two with ends incident with
$f$ and otherwise disjoint from the boundary of $f$, then let $\Delta$ be the minimal closed disk containing $f$ and $P$,
delete from $G_1$ the vertices and edges drawn in the interior of $\Delta$, and repeat.

Let $G_2$ be the resulting graph.  By Observation~\ref{obs-pre3}, every 4-coloring of $G_2$ extends to a 4-coloring of $G_1$,
and thus $G_2$ is restrictive.
Moreover, all faces of $G_2$ except for $f$ and the outer one are triangles, all internal vertices of $G_2$ not incident with $f$ have degree at least five,
the face $f$ is bounded by an induced cycle $K'$ and no vertex of $V(G_2)\setminus V(K')$ has non-adjacent neighbors in $K'$,
and in particular all internal vertices of $K'$ have degree at least four.
Moreover, Corollary~\ref{cor-no4} implies that $K'$ has length at least five.

Let $G'$ be obtained from $G_2$ by adding a vertex adjacent to all vertices of $K'$ and drawing it inside $f$.
Then $G'$ is a restrictive 4-candidate with fewer vertices than $G$, which is a contradiction.
\end{proof}
Let $\theta$ be a (diagonal or rainbow) 4-coloring of the cycle $Q$ bounding the outer face of $G$ which does not extend to a 4-coloring of $G$,
and let $H$ be the subcanvas of $G$ drawn in the closure of $\Lambda$.
Corollary~\ref{cor-dredu} implies that if we can show (using Kempe chains and the properties of $H$)
that every 4-coloring of $G_1$ extending $\theta$ can be turned into a 4-coloring of $G$ with the same restriction to $Q$, then we obtain a contradiction.
Since the procedure for obtaining such proofs (showing that $H$ is \emph{D-reducible}) is standard (see e.g.~\cite{rsst} for a detailed description) and requires only minor modifications
to fit our setting, we leave its presentation to Appendix.

As shown in~\cite{steinberger2010unavoidable}, D-reducibility arguments are sufficient to prove the Four Color Theorem.  However, many of the configurations
that are D-reducible in the setting of the Four Color Theorem are not D-reducible in our setting, since we get extra constraints on switching
Kempe chains containing the precolored vertices (in fact, small D-reducible configurations seem to be very rare---there is some hope that
for large configurations, the loss of a few Kempe chains because of the precolored vertices becomes less important).

Realistically,
to prove the conjecture, we will need to use \emph{C-reducible} configurations, where the configuration $H$ is replaced by a smaller
configuration $H'$ (thus, we only need to argue that every 4-coloring of the resulting graph $G'$ which extends $\theta$ can be turned into a 4-coloring of $G$ with the same restriction to $Q$);
note that this replacement can also involve identification of the vertices of the cycle $K$ bounding the configuration.
An issue that proofs of the Four Color Theorem using $C$-reducible configurations need to overcome is that such a replacement could result in
creation of loops, making the reduced graph impossible to color (this is dealt with by observing that the loops correspond to short separating cycles in the original
graph, which are excluded using arguments similar to the proof of Corollary~\ref{cor-no4}).
In our setting, the issue is compounded by the possibility that $G'$ is not a candidate, i.e., that the replacement of $H$ by $H'$ can create internal vertices of degree at most four
or non-facial triangles (and no reduction along the lines of Corollary~\ref{cor-dredu} may be possible).  One way to deal with the issue would be to
find an exact (or at least sufficiently detailed) characterization of restrictive weak 4-candidates based on the assumption that the conjecture holds
for graphs with less than $|V(G)|$ vertices, and then use it to argue that $G'$ cannot be restrictive even if it is not a candidate.

\bibliographystyle{plain}
\bibliography{../../data.bib}

\begin{thebibliography}{10}

\bibitem{AppHak1}
K.~Appel and W.~Haken.
\newblock Every planar map is four colorable, {P}art {I}: {D}ischarging.
\newblock {\em Illinois J. of Math.}, 21:429--490, 1977.

\bibitem{AppHakKoc}
K.~Appel, W.~Haken, and J.~Koch.
\newblock Every planar map is four colorable, {P}art {II}: {R}educibility.
\newblock {\em Illinois J. of Math.}, 21:491--567, 1977.

\bibitem{brinkmann2005construction}
G.~Brinkmann and B.~D. McKay.
\newblock Construction of planar triangulations with minimum degree 5.
\newblock {\em Discrete Mathematics}, 301:147--163, 2005.

\bibitem{bcr1}
D.~I.~A. Cohen.
\newblock Block count consistency and the four color problem.
\newblock Manuscript.

\bibitem{trfree7}
Z.~Dvo\v{r}\'ak, D.~Kr\'al', and R.~Thomas.
\newblock Three-coloring triangle-free graphs on surfaces {VII}. {A}
  linear-time algorithm.
\newblock {\em Journal of Combinatorial Theory, Series B}, 152:483--504, 2022.

\bibitem{fisk1973combinatorial}
S.~Fisk.
\newblock Combinatorial structure on triangulations. {I}. {T}he structure of
  four colorings.
\newblock {\em Advances in Mathematics}, 11:326--338, 1973.

\bibitem{Fisk78}
S.~Fisk.
\newblock The nonexistence of colorings.
\newblock {\em J. Combin. Theory, Ser.~B}, 24:247--248, 1978.

\bibitem{bcr2}
S.~J. Gismondi and E.~R. Swart.
\newblock A new type of 4-colour reducibility.
\newblock {\em Congressus Numerantium}, 82:33--48, 1991.

\bibitem{hutcheuler}
J.~Hutchinson, B.~Richter, and P.~Seymour.
\newblock Colouring {E}ulerian triangulations.
\newblock {\em Journal of Combinatorial Theory, Series B}, 84:225--239, 2002.

\bibitem{McKaygener}
Brendan~D. McKay.
\newblock Isomorph-free exhaustive generation.
\newblock {\em J. Algorithms}, 26:306--324, 1998.

\bibitem{rsst}
N.~Robertson, D.~P. Sanders, P.~Seymour, and R.~Thomas.
\newblock The four colour theorem.
\newblock {\em J. Combin. Theory, Ser.~B}, 70:2--44, 1997.

\bibitem{steinberger2010unavoidable}
J.~P. Steinberger.
\newblock An unavoidable set of {D}-reducible configurations.
\newblock {\em Trans. Amer. Math. Soc}, 362:6633--6661, 2010.

\end{thebibliography}

\section*{Appendix}

In this Appendix, we give a precise description of the standard Kempe chain reducibility argument in our setting.

A \emph{color partition} is an equivalence $\pi$ on the set $\{1,\ldots,4\}$ which
has exactly two equivalence classes of size two.  Let $Q$ and $K$ be disjoint cycles drawn
in the plane so that $K$ is contained in the closed disk bounded by $Q$, and let $\psi$ be a 4-coloring of $Q\cup K$.
Let $\psi/\pi$ be the equivalence on $V(Q\cup K)$ such that $u\equiv_{\psi/\pi} v$ if and only if $\psi(u)\equiv_\pi \psi(v)$.
An edge $uv$ of $Q\cup K$ is \emph{$\psi/\pi$-monochromatic} if $u\equiv_{\psi/\pi} v$.
A \emph{potential $\pi$-Kempe chain extract} for $\psi$ is a triple $(\beta,\kappa,\sigma)$ of equivalences on $V(Q\cup K)$
such that
\begin{itemize}
\item[(i)] $\beta=\psi/\pi$,
\item[(ii)] $\kappa$ is a refinement of $\psi/\pi$ such that $u\equiv_\kappa v$ for every $\psi/\pi$-monochromatic edge $uv\in E(Q\cup K)$,
\item[(iii)] $\sigma$ is the refinement of $\kappa$ such that for every $u,v\in V(Q\cup K)$, we have $u\equiv_\sigma v$ if and only if $u\equiv_\kappa v$ and $\psi(u)=\psi(v)$, and
\item[(iv)] there exists a plane graph $F$ consisting of a perfect matching $M$ on the non-$\psi/\pi$-monochromatic edges of $E(Q\cup K)$ and possibly of a cycle $C$ vertex-disjoint from the matching,
such that
\begin{itemize}
\item the drawing of $F$ is contained in the annulus $\Theta$ between $K$ and $Q$ and intersects the boundary of $\Theta$ only in the points representing the vertices of $M$,
equal to the midpoints of the drawings of the corresponding edges of $Q\cup K$, and
\item any two vertices of $V(Q\cup K)$ are $\kappa$-equivalent if and only if they are drawn in the same arcwise-connected component of the space obtained from $\Theta$ by removing
the drawing of $F$.
\end{itemize}
\end{itemize}
We say that such a plane graph $F$ is a \emph{realizer} for the potential $\pi$-Kempe chain extract.   Let us remark that whether such a realizer exists
can of course be determined from the combinatorial properties of $\kappa$.  Informally, the triple $(\beta,\kappa,\sigma)$ describes a possibility how Kempe chains of a $4$-colored plane graph
with facial cycles $K$ and $Q$ could look like:
\begin{itemize}
\item We simultaneously consider Kempe chains for two disjoint pairs $A=\{c_1,c_2\}$ and $B=\{c_3,c_4\}$ of colors, and $\beta$ divides
$V(Q\cup K)$ into vertices whose color belongs to $A$ and vertices whose colors belong to $B$ (without explicitly saying which equivalence class
corresponds to $A$ and which to $B$---this does not matter, since the two possibilities are equivalent up to a permutation of colors).
\item The classes of the equivalence $\kappa$ correspond to the intersections of the Kempe chains with $V(Q\cup K)$; the condition (iv) expresses the
planarity constraints on Kempe chains, as we discuss below.
\item The equivalence $\sigma$ divides each class of $\kappa$ into (at most) two parts based on the colors of vertices
(without explicitly specifying the colors corresponding to the equivalence classes, since the colors can be exchanged on the Kempe chain).
\end{itemize}

An \emph{environment} is a triple $(G,Q,K)$, where $G$ is a plane graph such that
the cycle $Q\subseteq G$ bounds the outer face of $G$, the cycle $K\subseteq G$ bounds an internal face of $G$, and all other faces of $G$ have length three.
Let $\varphi$ be a 4-coloring of $G$.  We are going to consider the Kempe chains of this 4-coloring for two disjoint pairs $\{c_1,c_2\}$ and $\{c_3,c_4\}$ of colors.
Let $\pi$ be the color partition such that $c_1\equiv_\pi c_2$ and $c_3\equiv_\pi c_4$, let $\psi$ be the restriction of $\varphi$ to $Q\cup K$, and
let $\beta=\psi/\pi$.  Let $X\subseteq E(G)$ consist of the edges $uv$ such that $\varphi(u)\not\equiv_\pi\varphi(v)$,
so that $G-X$ is the disjoint union of the $\{c_1,c_2\}$ and $\{c_3,c_4\}$ Kempe chains of $\varphi$, and let $\kappa$ be the equivalence on $V(Q\cup K)$
where two vertices are equivalent if and only if they belong to the same component of $G-X$.  Let $\sigma$ be the equivalence on $V(Q\cup K)$ such that
two vertices are equivalent if and only if they are $\kappa$-equivalent and have the same color.

We claim that $(\beta,\kappa,\sigma)$ is a potential $\pi$-Kempe chain extract for $\psi$.  Indeed, the conditions (i), (ii), and (iii) are clearly satisfied,
and thus it suffices to find a realizer showing that (iv) holds.
Let $F_1$ be the spanning subgraph of the dual of $G$ containing exactly the duals of the edges of $X$, and let $k$ and $q$ be the vertices of $F_1$ corresponding to the faces of $G$
bounded by $K$ and $Q$, respectively.  Observe that all vertices of $F_1$ have even degree; in particular, all vertices other than $k$ and $q$ have degree $0$ or $2$.
Hence, the drawing of $F_1$ can be decomposed into simple curves from $k$ to $q$ and simple closed curves passing through at most one of $k$ and $q$,
disjoint everywhere except for $k$ and $q$.  Moreover, note that that each $\{c_1,c_2\}$ and $\{c_3,c_4\}$ Kempe chain consists exactly of the vertices
of $G$ drawn in a face of $F_1$.  Thus, $F_1$ can be turned into a realizer of $(\beta,\kappa,\sigma)$ by
\begin{itemize}
\item deleting all components not containing $k$ and $q$, except for one cycle separating $k$ from $q$ if any is present,
\item subdividing each edge $e^\star$ of $F_1$ dual to an edge $e$ of $Q\cup K$ by a vertex drawn at the intersection of these edges, and
\item deleting the vertices $k$ and $q$ and suppressing the vertices of degree two.
\end{itemize}
We say that $(\beta,\kappa,\sigma)$ is the \emph{$\pi$-Kempe chain extract} of $\varphi$.

For a potential $\pi$-Kempe chain extract $(\beta,\kappa,\sigma)$ for a 4-coloring $\psi$ of $Q\cup K$,
let $n_{\psi,\pi,\beta,\kappa,\sigma}(G)$ denote the number of 4-colorings of $G$ which extend $\psi$ and whose $\pi$-Kempe chain extract is equal to $(\beta,\kappa,\sigma)$.
Let us note the following key observation.

\begin{observation}\label{obs-kempe-equal}
Let $(G,Q,K)$ be an environment, and for $i\in \{1,2\}$, let $\pi_i$ be a color partition and $\psi_i$ a $4$-coloring of $Q\cup K$.
Suppose that $(\beta,\kappa,\sigma)$ is both a potential $\pi_1$-Kempe chain extract for $\psi_1$ and a potential $\pi_2$-Kempe chain extract for $\psi_2$.
Then
$$n_{\psi_1,\pi_1,\beta,\kappa,\sigma}(G)=n_{\psi_2,\pi_2,\beta,\kappa,\sigma}(G).$$
\end{observation}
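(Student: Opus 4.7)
The plan is to construct an explicit bijection $\Phi$ between the two sets of colorings counted by $n_{\psi_1,\pi_1,\beta,\kappa,\sigma}(G)$ and $n_{\psi_2,\pi_2,\beta,\kappa,\sigma}(G)$, by sending each $4$-coloring $\varphi_1$ of $G$ extending $\psi_1$ with $\pi_1$-extract $(\beta,\kappa,\sigma)$ to a $4$-coloring $\varphi_2$ with the same Kempe chain partition of $V(G)$ but with colors relabeled chain by chain to fit $\pi_2$ and $\psi_2$.  The intuition is that the extract $(\beta,\kappa,\sigma)$ already encodes all the topological and chromatic information about the Kempe chains on the boundary that is needed to make the relabeling canonical.

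The preparatory step is to observe, using condition~(i), that each class $B$ of $\beta$ determines a $\pi_1$-pair $P^1(B)=\{\psi_1(v):v\in B\}$ and a $\pi_2$-pair $P^2(B)=\{\psi_2(v):v\in B\}$, giving a canonical bijection between the two $\pi_1$-pairs and the two $\pi_2$-pairs.  Given $\varphi_1$, let $\mathcal{K}$ be its $\pi_1$-Kempe chain decomposition of $V(G)$.  Each $C\in\mathcal{K}$ is connected and is properly $2$-colored by $\varphi_1$ using a single $\pi_1$-pair, which must equal $P^1(B(C))$ for a unique $\beta$-class $B(C)$, yielding two color classes $V_0(C),V_1(C)$.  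I define $\varphi_2$ on $C$ to use the two colors of $P^2(B(C))$, determining the matching of the $V_i(C)$ to these two colors as follows.  If $C$ meets $V(Q\cup K)$, then for each $i$ with $V_i(C)\cap V(Q\cup K)\neq\emptyset$ the intersection is contained in a single $\sigma$-class by condition~(iii) applied to $\psi_1$, so $\psi_2$ is constant on it by condition~(iii) applied to $\psi_2$; I assign this common $\psi_2$-value to all of $V_i(C)$ (and use the remaining color of $P^2(B(C))$ for the other side, if the latter is empty).  If $C$ is disjoint from $V(Q\cup K)$, I use a fixed rule depending only on a preselected total ordering of colors and vertex labels to map the two colors of $P^1(C)$ to those of $P^2(B(C))$.

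It then remains to verify: (a)~$\varphi_2$ is a proper $4$-coloring, since adjacent vertices within a single chain lie in distinct $V_i(C)$, while adjacent vertices in distinct chains have $\varphi_1$-values in distinct $\pi_1$-pairs, hence in distinct $\beta$-classes, hence in distinct $\pi_2$-pairs under $\varphi_2$; (b)~$\varphi_2$ extends $\psi_2$, immediately from the $\sigma$-based matching; (c)~the $\pi_2$-Kempe chain decomposition of $\varphi_2$ coincides with $\mathcal{K}$ by the same dichotomy, so the resulting extract is $(\beta,\kappa,\sigma)$, because $\sigma$ is uniquely determined by $\kappa$ and $\psi_i$.  The inverse map $\Phi^{-1}$ is defined symmetrically, and $\Phi^{-1}\circ\Phi$ acts as the identity because the boundary relabelings are dictated by $\sigma$ in both directions, and the internal relabelings commute with the fixed ordering rule.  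The main obstacle I anticipate is bookkeeping for internal Kempe chains: the extract data says nothing about them, so the bijection must rely on an auxiliary canonical rule, and care is needed to ensure that this rule composed with itself returns the identity.  Choosing a rule that depends only on a fixed color order and on vertex labels makes this automatic, but the details do require verification.
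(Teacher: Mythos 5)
Your bijection has the same underlying mechanism as the paper's---exchange colors within Kempe chains so that $\psi_1$ becomes $\psi_2$---but the paper organizes it in a way that dissolves exactly the obstacle you flag. The paper first \emph{normalizes}: permute the colors so that $\pi_1=\pi_2=\pi$, then (since both $\psi_i/\pi$ equal $\beta$) further permute so that $\psi_1(v)\equiv_\pi\psi_2(v)$ for every $v\in V(Q\cup K)$. After that, because $\sigma$ is determined by $\kappa$ and by each $\psi_i$ in the same way, $\psi_2$ is obtained from $\psi_1$ by simultaneously exchanging $1\leftrightarrow 2$ and $3\leftrightarrow 4$ on the vertices of some set $S$ of $\kappa$-classes. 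The bijection then just performs this exchange on the Kempe chain components $G_C$ for $C\in S$; internal Kempe chains are never touched, so no auxiliary canonical rule is needed, and involutivity and preservation of the extract are immediate. Your version skips this normalization and relabels all chains, which is where the ``fixed rule'' for internal chains comes in. That rule can be made to work, but you have a genuine gap in defining $B(C)$ for internal chains: you assert a ``canonical bijection between the two $\pi_1$-pairs and the two $\pi_2$-pairs'' coming from $\beta$, but $\beta$ can have only \emph{one} class (e.g.\ when both $Q$ and $K$ are even cycles and $\psi_i$ uses colors from a single $\pi_i$-pair), in which case an internal chain $C$ colored with the other pair has no $\beta$-class $B$ with $P^1(B)=P^1(C)$, and your construction is undefined. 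This is fixable---extend the $\beta$-induced partial bijection of $\pi$-pairs to a full bijection in the unique way---but the cleaner path is the paper's normalization, which makes the whole issue vanish.
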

\begin{proof}
By permuting the colors if needed, we can assume that $\pi_1$ and $\pi_2$ are both equal to the color partition $\pi$ such that $1\equiv_\pi 2$ and $3\equiv_\pi 4$.
Since $\psi_1/\pi=\psi_2/\pi=\beta$, by exchanging the colors $1$ and $2$ with colors $3$ and $4$ in $\psi_2$ if needed,
we can furthermore assume that $\psi_1(v)\equiv_\pi \psi_2(v)$ for every $v\in V(Q\cup K)$.  Because $\psi_1$ and $\psi_2$ also share the part $\sigma$ of the potential $\pi$-Kempe chain extract,
this implies that there exists a set $S$ of equivalence classes of $\kappa$ such that $\psi_2$ is obtained from $\psi_1$ by exchanging the color $1$ with $2$ and the color $3$ with $4$ on the vertices
of the elements of $S$.

Let $\varphi$ be any $4$-coloring of $G$ extending $\psi_1$ whose $\pi$-Kempe chain extract is $(\beta,\kappa,\sigma)$.
For distinct colors $c_1$ and $c_2$, let $G_{c_1,c_2}$ be the subgraph of $G$ induced by the vertices of colors $c_1$ and $c_2$.
The definition of the $\pi$-Kempe chain extract implies that for each equivalence class $C$ of $\kappa$, there exists a component $G_C$
of $G_{1,2}\cup G_{3,4}$ intersecting $V(Q\cup K)$ exactly in $C$.  By exchanging the color $1$ with $2$ and the color $3$ with $4$ on $\bigcup_{C\in S} G_C$,
we transform $\varphi$ into a $4$-coloring of $G$ extending $\psi_2$ whose $\pi$-Kempe chain extract is $(\beta,\kappa,\sigma)$.  It is easy to see
that this correspondence establishes a bijection proving the equality
$$n_{\psi_1,\pi_1,\beta,\kappa,\sigma}(G)=n_{\psi_2,\pi_2,\beta,\kappa,\sigma}(G).$$
\end{proof}

Thus, let us define $n_{\beta,\kappa,\sigma}(G)$ as $n_{\psi,\pi,\beta,\kappa,\sigma}(G)$ for any 4-coloring $\psi$ and color partition $\pi$ such that $(\beta,\kappa,\sigma)$ is
a potential $\pi$-Kempe chain extract for $\psi$.  Let us also define $\Psi$ as the set of all 4-colorings of $K\cup Q$ and $\mathcal{E}$ as the set of all potential Kempe chain extracts,
and for every $\psi\in\Psi$, let $n_\psi(G)$ be the number of 4-colorings of $G$ that extend $\psi$.

Consider now an environment $(G,Q,K)$ and a plane graph $F$ with the outer face bounded by $K$.
Suppose that $\theta$ is a $4$-coloring of $Q$ that does not extend to 4-coloring of $G\cup F$.
Thus, $n_\psi(G)=0$ for every $4$-coloring $\psi$ of $Q\cup K$ that extends $\theta$.
Together with Observation~\ref{obs-kempe-equal} and the obvious fact that the number of 4-colorings subject to any condition is non-negative,
this gives a set of linear constraints satisfied by the vector $(n_i(G):i\in \Psi\cup \mathcal{E})$.

More precisely, let $\Pi$ be the set of all three color partitions, and for $\psi\in\Psi$ and $\pi\in\Pi$,
let $E_{\psi,\pi}\subseteq\mathcal{E}$ denote the set of all potential $\pi$-Kempe chain extracts for $\psi$.
For a 4-coloring $\theta$ of $Q$, let $\Psi_{F,\theta}$ be the set of 4-colorings $\psi\in \Psi$
such that the restriction of $\psi$ to $Q$ is $\theta$ and such that $\psi$ extends to a 4-coloring of $F$.
We define $\Phi_{F,\theta}$ as the cone (independent of the environment $G$) consisting of the points $x\in\mathbb{R}^{\Psi\cup \mathcal{E}}$ 
satisfying the following constraints:
\begin{align*}
x&\ge 0\\
x(\psi)&=\sum_{\epsilon \in E_{\psi,\pi}} x(\epsilon)&\text{ for every $\psi\in\Psi$ and $\pi\in \Pi$}\\
x(\psi)&=0&\text{for every $\psi\in\Psi_{F,\theta}$.}
\end{align*}
A coloring $\omega$ of $K$ is \emph{$(F,\theta)$-feasible} if there exists $x\in\Phi_{F,\theta}$ such that $x(\omega\cup\theta)\neq 0$;
that is, the set of constraints defining $\Phi_{F,\theta}$ does not exclude the possibility that $G$ has a 4-coloring extending $\theta$
whose restriction to $K$ equals $\omega$.  Otherwise, we say that $\omega$ is \emph{$(F,\theta)$-infeasible}.
In particular, every 4-coloring of $K$ which extends to a 4-coloring of $F$ is $(F,\theta)$-infeasible.

Let $F_1$ be a plane graph with the outer face bounded by a closed walk $W$ of the same length as $K$
and let $f$ be a function mapping the vertices of $K$ to vertices of $W$ in order.  Note that for a 4-coloring $\omega_1$ of $W$,
$f\circ\omega_1$ is a 4-coloring of $K$.  We say that $(F_1,f)$ is a \emph{$\theta$-reducent} for $F$ if
for every 4-coloring $\omega_1$ of $W$ which extends to a 4-coloring of $F_1$, the 4-coloring $f\circ\omega_1$ is $(F,\theta)$-infeasible.
Let $G_1$ be the graph obtained from $G$ by adding $F_1$ and identifying each vertex $v\in V(K)$ with the vertex $f(v)$;
we say that $G_1$ is obtained from $G\cup F$ by \emph{replacing} $F$ by $(F_1,f)$.
The basic property used to show reducibility of configurations is given in the following observation.  

\begin{observation}\label{obs-cone}
Let $G_0$ be a canvas with the outer face bounded by a cycle $Q$ and let $K$ be a cycle in $G$ disjoint from $Q$.
Let $F$ be the subgraph of $G_0$ drawn in the closed disk bounded by $K$ and let $G$ be the subgraph of $G_0$
drawn in the closed annulus between $K$ and $Q$, so that $(G,Q,K)$ is an environment.  Let $\theta$ be a 4-coloring of $Q$
and let $(F_1,f)$ be a $\theta$-reducent for $F$.  Let $G_1$ be the graph obtained from $G_0$ by replacing $F$ by $(F_1,f)$.
If $\theta$ does not extend to a 4-coloring of $G_0$, it also does not extend to a 4-coloring of $G_1$.
\end{observation}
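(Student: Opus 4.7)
The plan is to prove the contrapositive: assuming $\theta$ extends to a 4-coloring $\varphi_1$ of $G_1$, I will derive that $\theta$ also extends to a 4-coloring of $G_0 = G\cup F$. The bridge between the two graphs is the vector $x\in \mathbb{R}^{\Psi\cup\mathcal{E}}$ defined by $x(i) = n_i(G)$; the key point will be that if $\theta$ fails to extend to $G_0$, then $x$ lies in the cone $\Phi_{F,\theta}$, and this in turn forces every boundary coloring on $K$ realized by a 4-coloring of $G$ extending $\theta$ to be $(F,\theta)$-feasible.

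First I would verify that $x\in\Phi_{F,\theta}$ under the assumption that $\theta$ does not extend to $G_0$. Nonnegativity is immediate. For the Kempe constraint $x(\psi)=\sum_{\epsilon\in E_{\psi,\pi}}x(\epsilon)$, every 4-coloring of $G$ extending $\psi$ determines a unique $\pi$-Kempe chain extract, which lies in $E_{\psi,\pi}$ by construction; partitioning the extensions of $\psi$ by this extract and invoking Observation~\ref{obs-kempe-equal} (so that $n_\epsilon(G)$ is a well-defined quantity independent of the pair $(\psi,\pi)$ witnessing $\epsilon$) yields the desired equality. For the vanishing constraint $x(\psi)=0$ when $\psi\in\Psi_{F,\theta}$, observe that $\psi$ restricted to $K$ extends to a 4-coloring of $F$; if $\psi$ admitted any extension to a 4-coloring of $G$, we could glue these two colorings along $K$ to obtain a 4-coloring of $G_0$ extending $\theta$, contradicting the hypothesis. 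Hence $n_\psi(G)=0$, as required.

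Now I would use the reducent hypothesis to close the argument. Let $\omega_1$ be the restriction of $\varphi_1$ to $W$; since $\varphi_1$ restricts to a 4-coloring of $F_1$ extending $\omega_1$, the defining property of the $\theta$-reducent $(F_1,f)$ implies that $\omega:=f\circ\omega_1$ is $(F,\theta)$-infeasible. On the other hand, because in $G_1$ each vertex $v\in V(K)$ is identified with $f(v)\in V(W)$, the restriction $\varphi_1|_G$ is a genuine 4-coloring of $G$ which agrees with $\theta$ on $Q$ and with $\omega$ on $K$. Thus $n_{\theta\cup\omega}(G)\ge 1$, so the vector $x$ constructed above satisfies $x(\theta\cup\omega)>0$; since $x\in\Phi_{F,\theta}$, this exhibits $\omega$ as $(F,\theta)$-feasible, directly contradicting the reducent conclusion. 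I do not expect a serious obstacle in any individual step; the only slightly subtle point is the invocation of Observation~\ref{obs-kempe-equal} to justify writing $n_\epsilon(G)$ without reference to a specific $(\psi,\pi)$, which is what makes the linear constraints defining $\Phi_{F,\theta}$ a legitimate statement about a single vector $x$.
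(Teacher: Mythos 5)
Your proposal is correct and follows essentially the same argument as the paper: verify that the vector $x(i)=n_i(G)$ lies in $\Phi_{F,\theta}$ (using Observation~\ref{obs-kempe-equal} for the Kempe constraints and the non-extendability hypothesis for the vanishing constraints), then observe that a 4-coloring of $G_1$ extending $\theta$ would restrict to a 4-coloring of $G$ witnessing $x(\theta\cup\omega)>0$, making $\omega=f\circ\omega_1$ $(F,\theta)$-feasible and contradicting the reducent property. The only cosmetic difference is that you announce it as a contrapositive but then run it as a contradiction, and you spell out the gluing step for $n_\psi(G)=0$ that the paper leaves implicit.
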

\begin{proof}
Since $\theta$ does not extend to a 4-coloring of $G_0$, we have $n_\psi(G)=0$ for every $\psi\in\Psi_{F,\theta}$.
Moreover, for every $\psi\in\Psi$ and $\pi\in \Pi$, every 4-coloring of $G$ which extends $\psi$ has a uniquely determined $\pi$-Kempe chain extract,
and thus $$n_\psi(G)=\sum_{\epsilon \in E_{\psi,\pi}} n_{\psi,\pi,\epsilon}(G)=\sum_{\epsilon \in E_{\psi,\pi}} n_\epsilon(G).$$
Therefore, the point $x\in\mathbb{R}^{\Psi\cup \mathcal{E}}$ such that
$x(\psi)=n_\psi(G)$ for $\psi\in\Psi$ and $x(\epsilon)=n_{\epsilon}(G)$ for $\epsilon\in\mathcal{E}$ belongs to $\Phi_{F,\theta}$.

Suppose for a contradiction that $\varphi_1$ is a 4-coloring of $G_1$ extending $\theta$, and let $\omega_1$ be the restriction of $\varphi_1$ to $W$.
Clearly $\omega_1$ extends to a 4-coloring of $F_1$, and since $(F_1,f)$ is a $\theta$-reducent, the 4-coloring $f\circ\omega_1$ of $K$ is $(F,\theta)$-infeasible.

Let $\varphi$ be the 4-coloring of $G$ corresponding to $\varphi_1$; that is, $\varphi(v)=\varphi_1(v)$ for $v\in V(G)\setminus V(K)$ and
$\varphi(v)=\varphi_1(f(v))$ for $v\in V(K)$.  Let $\psi$ be the restriction of $\varphi$ to $Q\cup K$.
Since $\psi$ extends to a 4-coloring $\varphi$ of $G$, we have
$$0<n_\psi(G)=x(\psi).$$
Observe that $\psi=\theta\cup (f\circ\omega_1)$, and thus the 4-coloring $f\circ\omega_1$ is $(F,\theta)$-feasible.  This is a contradiction.
\end{proof}

In the situation of Observation~\ref{obs-cone}, if $|V(F)|>|V(K)|$ and $F_1=K$ (i.e., all 4-colorings of $K$ are $(F,\theta)$-infeasible), then we say that $F$ is \emph{D-reducible}.
In case that we know that $\theta$ extends to a 4-coloring of every non-trivial environment in $G_0$ (e.g., as in Corollary~\ref{cor-dredu}), this implies that
the configuration $F$ cannot appear in $G_0$.  If $F_1$ is any graph with $|V(F_1)|<|V(F)|$, then $F$ is \emph{C-reducible with reducent $F_1$}.
As we discuss at the end of Section~\ref{sec-proofcom}, there are extra challenges in being able to use C-reducible configurations, since it is harder to
ensure that induction hypothesis can be applied to $G_1$.

Finally, let us remark that in the presented formulation (called block-count reducibility~\cite{bcr1,bcr2}), determining whether a coloring is $(F,\theta)$-feasible involves solving a linear program;
this is generally too slow to be practical for large configurations.  Instead, one usually solves the boolean restriction of the program, with variables $x(\psi)$
and $x(\epsilon)$ being true when $n_\psi(G)>0$ and $n_{\epsilon}(G)>0$, respectively, and with addition replaced by logical disjunction.
In this boolean formulation, the program can be solved by straightforward greedy propagation, see e.g.~\cite{rsst} for more details.

\end{document}